\numberwithin{equation}{section}
\theoremstyle{plain}
\newtheorem{thm}{Theorem}[section]
\newtheorem{prop}[thm]{Proposition}
\newtheorem{cor}[thm]{Corollary}
\newtheorem{lemma}[thm]{Lemma}
\theoremstyle{definition}
\newtheorem{deff}[thm]{Definition}
\newtheorem{example}[thm]{Example}
\newcommand{\ran}{\bm{r}}
\newcommand{\domr}{\bm{d}}
\newcommand{\G}{\Gamma}
\newcommand{\im}{\operatorname{Im}}
\newcommand{\dom}{\operatorname{Dom}}
\newcommand{\pth}{\mathrm{Path}}
\newcommand{\so}{\mathbf{s}}
\newcommand{\ra}{\mathbf{r}}
\newcommand{\gr}{\operatorname{gr}}
\newcommand{\Typ}{\operatorname{Typ}}
\newcommand{\typ}{\operatorname{typ}}
\newcommand{\Gc}[1]{\boldsymbol #1 \operatorname{\bf-Gr}}
\newcommand{\Moc}[1]{\boldsymbol #1\operatorname{\bf-Mod}}
\newcommand{\FAct}[1]{\boldsymbol #1\operatorname{\bf-FAct}}
\newcommand{\FGrAct}[1]{\boldsymbol #1\operatorname{\bf-GrFAct}}
\newcommand{\inv}{^{-1}}
\def\a{\alpha}
\def\b{\beta}
\def\SS{\mathcal{S}}
\def\M{\mathbb{M}}
\def\E{\mathbb{E}}
\def \Z{\mathbb Z}
\def\-{\text{-}}
\begin{document}

\title{Graded semigroups}

\author{Roozbeh Hazrat}
\address{Roozbeh Hazrat:
Centre for Research in Mathematics and Data Science\\
Western Sydney University\\
AUSTRALIA} \email{r.hazrat@westernsydney.edu.au}

\author{Zachary Mesyan}
\address{Zachary Mesyan:
Department of Mathematics\\ University of Colorado, Colorado Springs, CO 80918
\\USA} \email{zmesyan@uccs.edu}

\subjclass[2020]{20M10, 18B40, 20M18 (primary), 20M17, 20M25, 20M50 (secondary)}

\keywords{semigroup, graded semigroup, inverse semigroup, graded inverse semigroup, groupoid, graded groupoid, smash product, Morita theory, semigroup category, semigroup ring}

\date{\today}

\begin{abstract} 
We systematically develop a theory of graded semigroups, that is, semigroups $S$ partitioned by groups $\Gamma$, in a manner compatible with the multiplication on $S$. We define a smash product $S\#\Gamma$, and show that when $S$ has local units, the category $\Moc{S\# \Gamma}$ of sets admitting an $S\#\Gamma$-action is isomorphic to the category $\Gc{S}$ of graded sets admitting an appropriate $S$-action. We also show that when $S$ is an inverse semigroup, it is strongly graded if and only if $\Gc{S}$ is naturally equivalent to $\Moc{S_\varepsilon}$, where $S_\varepsilon$ is the partition of $S$ corresponding to the identity element $\varepsilon$ of $\Gamma$. These results are analogous to well-known theorems of Cohen/Montgomery and Dade for graded rings. Moreover, we show that graded Morita equivalence implies Morita equivalence for semigroups with local units, evincing the wealth of information encoded by the grading of a semigroup. We also give a graded Vagner-Preston theorem, provide numerous examples of naturally-occurring graded semigroups, and explore connections between graded semigroups, graded rings, and graded groupoids. In particular, we introduce graded Rees matrix semigroups, and relate them to smash product semigroups. We pay special attention to graded graph inverse semigroups, and characterise those that produce strongly graded Leavitt path algebras.
\end{abstract}

\maketitle

\tableofcontents

\section{Introduction}

The purpose of this paper is to build a theory of graded semigroups that parallels the theory of graded rings. We start with an overview of the motivating features of graded ring theory, which has become a vibrant subject, thanks to crucial applications to various  areas of mathematics.

Graded rings frequently appear when there is a group acting on an algebraic structure, or when the ring structure arises from a free construction modulo ``homogeneous'' relations. A ring $A$ is \emph{graded} by a group $\Gamma$ when, roughly, $A$ can be partitioned by $\Gamma$ in a way that is compatible with the structure of $A$. (See \S\ref{semigroupringsec} for more details.) Studying graded rings also naturally leads to studying representations of such rings, which can be compatibly partitioned by the relevant group, namely graded modules. Consequently, three categories play a prominent role in this setting: the category of left $A$-modules $\Moc{A}$, the category of graded left $A$-modules $\Gc{A}$, and the category of left $A_{\varepsilon}$-modules $\Moc{A_\varepsilon}$, where $A_{\varepsilon}$ is the subring of $A$ consisting of elements in the partition corresponding to the identity element $\varepsilon$ of $\Gamma$. A substantial portion of the theory of graded rings concerns the relationships between these categories. While the applications of this theory are numerous, one prominent example is the fundamental theorem of $K$-theory, proved by Quillen~\cite{quillen}, using the category of graded modules in a crucial way (see also~\cite{thomas}). 

Three constructions that play an essential role in the study of graded rings are strongly graded rings, smash products, and graded matrix rings. According to a theorem of Dade~\cite{dade}, a ring $A$ is strongly graded if and only if $\Gc{A}$ is naturally equivalent to $\Moc{A_\varepsilon}$. The smash product, constructed by Cohen and Montgomery~\cite{cohenmont}, allows one to produce a ring $A\#\Gamma$ such that $\Moc{A \# \Gamma}$ is isomorphic to $\Gc{A}$. Finally, graded matrix rings, aside from providing interesting examples, can be used to describe the relationship between graded rings $A$ and $B$ that are Morita equivalent, i.e., for which the categories $\Gc{A}$ and $\Gc{B}$ are equivalent. 
 
In this note we study grading on another class of algebraic objects, namely that of semigroups. Analogously to the case of rings, we say that a semigroup $S$ (with zero) is $\Gamma$-\emph{graded}, for some group $\Gamma$, if there is a partition or ``degree" map $\phi : S \setminus \{0\} \rightarrow \Gamma$ such that $\phi(st) = \phi(s)\phi(t)$ whenever $st \neq 0$. Our motivation for systematically studying graded semigroups comes from recent advances in the theory of combinatorial algebras, where graded rings played a prominent role. These algebras first arose in the work of Cuntz, in the context of operator algebras, and Leavitt, in the context of noncommutative rings. Ideas arising from these investigations were subsequently extended in various directions, leading to the paradigm summarised in the following diagram. (See~\cite{exel20081} for an account of these developments from the $C^*$ perspective, \cite{lisarooz} for the algebraic side, and~\cite{lawson1} for an exploration of the connections between some of the relevant semigroups and groupoids.) 
\begin{equation}\label{gffhdkcnejfksh}
\xymatrix@=1pc{ 
&&& C^*\text{-algebra} \ar@{<:>}[dd]\\
\text{combinatorial data} \ar[r] & \text{inverse semigroup} \ar[r] & \text{groupoid} \ar[ur] \ar[dr]\\
&&& \text{noncommutative algebra}}
\end{equation}
The now well-trodden path in (\ref{gffhdkcnejfksh}) starts with a natural inverse semigroup constructed from combinatorial data. The groupoid of germs of the semigroup turns out to be very well-behaved, and the corresponding convolution algebras (i.e., the groupoid $C^*$-algebra and the Steinberg algebra, discussed below in more detail) have very rich structures. These algebras are naturally graded, via lifting grading from the combinatorial data, and the grading plays a crucial role in describing their structure. For example, Cuntz and Krieger used it in their early work in the field, to prove graded uniqueness theorems. Our intention is to introduce and study the grading earlier along the path, pushing it from algebras to groupoids and inverse semigroups. 

The idea of assigning degrees to the elements of a semigroup has appeared in the literature before. For example, Howie \cite[p.\ 239]{howie} calls a semigroup $S$ equipped with a map $|\cdot |:S\rightarrow \mathbb N$, such that $|st|=|s|+|t|$, a \emph{semigroup with length}. He uses this construction to measure the lengths of words in a free semigroup. More relevantly to the paradigm described above, graded \emph{inverse} semigroups have been studied in connection with \'etale groupoids and Steinberg algebras~\cite{aradia,steinbergdia}. A more general notion has been explored in connection with congruences and Cayley graphs~\cite{ilic1,ilic2}. However, no systematic investigation of graded semigroups seems to have been undertaken before.

In addition to their relation to combinatorial algebras, graded semigroups actually arise quite naturally on their own. For example, it is well-known that any semigroup can be embedded in the full transformation semigroup of a set $X$. Now, if $X$ happens to come equipped with a map to a group, i.e., an assignment of degrees, then the collection of transformations of $X$ that respect the degrees constitutes a graded semigroup, and any graded semigroup can be embedded in one of this sort (Proposition~\ref{gradedrepresentationthrm}). An analogous statement can be proved for inverse semigroups (Proposition~\ref{grvagnerpreston}), giving a graded Vagner-Preston theorem. Also, for any $\Gamma$-graded ring $A$, the multiplicative semigroup of $A$ is likewise a $\Gamma$-graded semigroup. Moreover, one can construct graded analogues of Rees matrix semigroups (\S \ref{reesmatrixgroup}), and graph inverse semigroups are naturally $\Z$-graded (\S \ref{hfgfhdhksdkkls}). Finally, as mentioned before, free semigroups are likewise naturally $\Z$-graded, and this grading can be used to induce ones on various quotients of free semigroups, i.e., semigroups presented by generators and relations. Other examples are given below.

The heart of this paper consists of three categorical results, which parallel the aforementioned ones from ring theory. To state them, we require some notation. For a semigroup $S$ and a set $X$, we say that $X$ is a \emph{(unital pointed left) $S$-set} if there is an action of $S$ on $X$ such that $SX=X$, and $X$ has a distinguished ``zero" element $0_X$ (see \S \ref{grfdrferfsa} for more details). Also, if $S$ is $\Gamma$-graded, for some group $\Gamma$, then we say that an $S$-set $X$ is $\Gamma$-\emph{graded} if there is a function $X \setminus \{0_X\} \rightarrow \Gamma$ that respects the $S$-action. Let $\Moc{S}$ denote the category of unital pointed left $S$-sets, with functions that respect the $S$-action as morphisms, and let $\Gc{S}$ denote the subcategory of $\Moc{S}$ whose objects are the $\Gamma$-graded $S$-sets, and whose morphisms respect the $\Gamma$-grading. We show in Theorem~\ref{smashthrm} that, analogously to the Cohen/Montgomery result mentioned above, if $S$ has local units, then $\Gc{S}$ is isomorphic to $\Moc{S\# \Gamma}$, where $S\#\Gamma$ is a suitably defined \emph{smash product} for semigroups. We also show in Theorem~\ref{dadesthm} that when $S$ is an inverse semigroup, it is \emph{strongly} $\Gamma$-graded if and only if $\Gc{S}$ and $\Moc{S_\varepsilon}$ are naturally equivalent, in parallel to the aforementioned result of Dade. Our third categorical result, Theorem~\ref{hgbvgfhhf}, shows that for a pair of graded semigroups with local units, being \emph{graded} Morita equivalent implies being Morita equivalent, using the notion of Morita equivalence for semigroups introduced by Talwar \cite{talwar1} (see \S \ref{moritasection} for more details).

Aside from providing examples of graded semigroups, and studying the relevant categories, another goal of this paper is to investigate the relationships between graded semigroups, graded rings, and graded groupoids. In \S \ref{groupoidsection} we recall the relevant concepts about groupoids, show that strongly graded inverse semigroups produce strongly graded groupoids of germs, and describe the gradings on inverse semigroups constructed from strongly graded ample groupoids. In \S \ref{semigroupringsec} we review semigroup rings, show that strongly graded semigroups produce strongly graded semigroup rings, and relate smash product rings with smash product semigroups. In \S \ref{hfgfhdhksdkkls} we characterise the strongly graded graph inverse semigroups (Theorem~\ref{strgrgis}) and relate graph inverse semigroups to smash product semigroups (Theorem~\ref{gdhfthfhfhdsjje}). 

We are particularly interested in graph inverse semigroups, since they are built from graphs, as are other well-studied algebraic objects alluded to above, namely certain combinatorial algebras and groupoids. More specifically, starting from a graph $E$, in addition to the graph inverse semigroup $\SS(E)$, one can construct the graph groupoid $\mathscr{G}_E$ and the inverse semigroup $\mathscr{G}^h_E$ of slices of $\mathscr{G}_E$ (see \S \ref{topgroupoid} for more details, and~\cite{jones} for an explanation of the relations between these objects). From these groupoids and semigroups one can then build algebras, namely the Cohn path algebra $C_K(E)$ (which is a semigroup rings over $\SS(E)$), the Leavitt path algebra $L_K(E)$ (which is a certain quotient of the former), the graph groupoid Steinberg algebra $A_K(\mathscr{G}_E)$ (which is a convolution algebra over $\mathscr{G}_E$), and the enveloping algebra $K\langle \mathscr{G}^h_E \rangle$ of $\mathscr{G}^h_E$ (see \S \ref{hfgfhyhhvgfgffd}). All these algebras inherit natural $\Z$-gradings from $\SS(E)$ or $\mathscr{G}_E$, and the last three are graded isomorphic, for a fixed graph~\cite{CS,lisarooz}--see diagram below.
\begin{equation}\label{graphobjects}
\xymatrix{
& E \ar[d] \ar[dl]  \ar[dr] \\
\SS(E)   \ar[d]  & \mathscr{G}^h_E   \ar[d]& \mathscr{G}_E  \ar[d]\\
L_K(E) \ar @{} [r] |{\cong_{\gr}}  & K\langle \mathscr{G}^h_E  \rangle  \ar @{} [r] |{\cong_{\gr}} & A_K(\mathscr{G}_E)}
\end{equation}
In Theorem~\ref{hfghfyhff} and Corollary~\ref{hfghfyhff88} we characterise the graph inverse semigroups $\SS(E)$ for which the Leavitt path algebras $L_K(E)$ and graph groupoids $\mathscr{G}_E$ are strongly graded in the natural $\Z$-grading. We also describe the strongly graded Cohn path algebras $C_K(E)$ in Corollary~\ref{Cohncor}.

The paper concludes with some ideas for further research on our topic.

\section{Definitions and basics}

We begin by recalling relevant concepts from semigroup theory, defining \emph{graded} semigroups and related concepts, and providing some simple examples and observations.

\subsection{Graded semigroups} Recall that a \emph{semigroup} is a nonempty set equipped with an associative binary operation. A \emph{monoid} is a semigroup with an identity element $1$.  Throughout this note we assume that semigroups have a zero element $0$, unless specified otherwise. A semigroup $S$ is called \emph{regular} if every element $s\in S$ has an \emph{inner inverse} $t\in S$ such that $sts=s$. One can show that $S$ is regular if and only if for any $s\in  S$ there exists $t\in S$ such that $sts=s$ and $tst=t$. If every $s \in S$ has a unique inner inverse, denoted $s^{-1}$, then $S$ is called an \emph{inverse semigroup}.  

We say that a semigroup $S$ has \emph{local units} if for every $s \in S$ there exist $u,v \in E(S)$ such that $us=s=sv$, where $E(S)$ denotes the set of idempotents of $S$. It is easy to see that every regular semigroup has local units. A semigroup $S$ has \emph{common local units} if for all $s,t \in S$ there are idempotents $u,v \in S$ such that $us=s=sv$ and $ut=t=tv$. Clearly, every monoid has common local units. We refer the reader to~\cite{howie} for the theory of semigroups and~\cite{Lawson} for that of inverse semigroups. 
 
Next we define the main object of our interest.
 
\begin{deff}
Let $S$ be semigroup and $\Gamma$ a group. Then $S$ is called a $\Gamma$-\emph{graded semigroup} if there is a map $\phi : S \setminus\{0\} \to \Gamma$ such that $\phi(st) = \phi(s)\phi(t)$, whenever $st \not= 0$. For each $\alpha \in \Gamma$, we set $S_\alpha:=\phi^{-1}(\alpha) \cup \{0\}$. 

Equivalently, $S$ is a $\Gamma$-\emph{graded semigroup} if there exist subsets $S_\alpha$ of $S$ ($\alpha \in \Gamma$) such that
\[S = \bigcup_{\alpha \in \G} S_\alpha,\]
where $S_\a S_\b \subseteq S_{\a\b}$ for all $\a,\b \in \Gamma$, and $S_\a \cap S_\b = \{0\}$ for all distinct $\a,\b \in \Gamma$.
\end{deff}

Let $S$ be a $\Gamma$-graded semigroup. For each $\a \in \Gamma$ we refer to $S_{\alpha}$ as the \emph{component of $S$ of degree $\alpha$}. Also, for each $\a \in \Gamma$ and $s \in S_\alpha \setminus \{0\}$, we say that the \emph{degree} of $s$ is $\alpha$, and write $\deg(s)=\alpha$. Note that $\deg(s)=\varepsilon$ for all $s \in E(S) \setminus \{0\}$, and $\deg(s^{-1}) = \deg(s)^{-1}$ for all $s \in S \setminus \{0\}$ in the case where $S$ is an inverse semigroup. (Here, and throughout the article, the identity element of $\Gamma$ is denoted by $\varepsilon$.) The set $\big\{\a \in \Gamma \mid S_\alpha \not = \{0\} \big\}$ is called the \emph{support of $S$}. We say that $S$ is \emph{trivially graded} if the support of $S$ is contained in the trivial group $\{\varepsilon\}$, that is $S_\varepsilon =S$, in which case $S_\alpha =\{0\}$ for each $\alpha \in \Gamma \setminus \{\varepsilon\}$. Any semigroup admits a trivial grading by any group. It is also easy to see that $S_\varepsilon$ is a semigroup (with zero), and that $S_\varepsilon$ is an inverse semigroup whenever $S$ is.
 
A homomorphism $\phi:S \rightarrow T$ of $\Gamma$-graded semigroups is called a \emph{graded homomorphism} if $\phi(S_\alpha)\subseteq T_\alpha$ for every $\alpha \in \Gamma$. Thus a graded homomorphism is a homomorphism that preserves the degrees of the elements. 

\begin{example}
Given a group $\Gamma$, any free semigroup (with or without zero) $F=\langle x_i \mid i \in I \rangle$ can be made into a $\Gamma$-graded semigroup by assigning (freely) elements of $\Gamma$ to the generators $x_i$ of the semigroup. In particular, if $\Gamma = \mathbb{Z}$, the group of the integers, and we assign $1\in \mathbb Z$ to every generator of $F$, then $F=\bigcup_{i\in \mathbb N} F_n$, where $F_n$ is the set of words of length $n$, and so $F$ becomes a $\mathbb Z$-graded semigroup with support $\mathbb N$. 
\end{example}

\begin{example}\label{jgjgjejjiii}
Given a group $\Gamma$, a semigroup $S=\langle x_i \mid r_k=s_k\rangle$, defined by generators and relations, can be graded by assigning $\phi(x_i) \in \Gamma$ to each generator $x_i$, so that $\phi(r_k)=\phi(s_k)$, where $\phi : F\setminus\{0\} \rightarrow \Gamma$. In particular, any free inverse semigroup (with or without zero) can be graded in this manner. More concretely,
\begin{align*}
B&=\langle a,b \mid ab=1\rangle, \\
B'&=\langle a,b \mid a^2=0, b^2=0, aba=a, bab=b \rangle
\end{align*} 
are $\Gamma$-graded semigroups, via assigning any $\alpha\in \Gamma$ to $a$ and assigning $\alpha^{-1}$ to $b$.
\end{example}

In \S \ref{hfgfhdhksdkkls}, we investigate graph inverse semigroups, which constitute a vast class of semigroups that includes $B$ above. See also Example~\ref{mcalistereg}.
 
\begin{example}
Let $S=\bigcup_{\alpha \in \Gamma} S_\alpha$ be a $\Gamma$-graded semigroup, where $\Gamma$ is a torsion-free abelian group. For each $n \in \mathbb{Z}$ define the \emph{$n$-th Veronese semigroup} by $S^{(n)}:= \bigcup_{\alpha \in \Gamma} S_{n\alpha}$, where $S^{(n)}_\alpha=S_{n\alpha}$ for each $\alpha \in \Gamma$. Clearly $S^{(n)}$ is a subsemigroup of $S$, and if $S$ is a regular or an inverse semigroup, then so is $S^{(n)}$. Note that $S^{(-1)}=S$ as semigroups, but with the components flipped, i.e., $S^{(-1)}_\alpha= S_{-\alpha}$.
\end{example}
 
Recall that given a set $X$, the set $\mathcal{T}(X)$ of all functions $\psi :X\rightarrow X$ is a semigroup under composition of functions, with the empty function as the zero element, called the \emph{full transformation semigroup of $X$}. Our next goal is to show that every graded semigroup can be embedded in an appropriately defined graded subsemigroup of $\mathcal{T}(X)$.

We say that a set $X$ is \emph{pointed} if there is a distinguished element $0_X \in X$. Given a group $\Gamma$, we say that a pointed set $X$ is $\Gamma$-\emph{graded} if there is a map $\phi: X \backslash \{0_X\} \rightarrow \Gamma$. In this situation we set $X_\alpha=\phi^{-1}(\alpha) \cup \{0_X\}$ for each $\alpha \in \Gamma$. (In \S\ref{grfdrferfsa} we define a more elaborate version of this notion.) Denote by $\mathcal{T}'(X)$ the set of all pointed maps $\psi :X\rightarrow X$, i.e., ones for which $\psi(0_X)=0_X$. Clearly $\mathcal{T}'(X)$ is a subsemigroup of $\mathcal{T}(X)$, with $0_{\mathcal{T}'} \in \mathcal{T}'(X)$, defined by $0_{\mathcal{T}'}(x) = 0_X$ for all $x \in X$, as the zero element. For each $\alpha \in \Gamma$ define 
\[\mathcal{T}(X)_\alpha :=\big \{\psi \in \mathcal{T}'(X) \mid \psi (X_\beta) \subseteq X_{\alpha\beta} \text{ for all } \beta \in \Gamma \big \},\] and
\[\mathcal{T}^{\gr}(X) := \bigcup_{\alpha \in \Gamma} \mathcal{T}(X)_\alpha.\]
Then it is easy to check that $\mathcal{T}^{\gr}(X)$ is a $\Gamma$-graded subsemigroup of $\mathcal{T}'(X)$. 

\begin{prop} \label{gradedrepresentationthrm}
Let $S$ be a $\Gamma$-graded semigroup. Then there is a graded injective homomorphism $\psi:S \rightarrow \mathcal{T}^{\gr}(X)$ for some $\Gamma$-graded set $X$.
\end{prop}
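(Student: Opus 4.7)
The plan is to mimic the classical Cayley-type embedding $s \mapsto (t \mapsto st)$, but adjusted so that the target is a pointed graded set and injectivity is preserved. Let $X = S \cup \{e\}$, where $e$ is a fresh symbol, take the distinguished point $0_X = 0 \in S$, and grade $X$ by $X_\varepsilon = S_\varepsilon \cup \{e\}$ and $X_\alpha = S_\alpha$ for $\alpha \neq \varepsilon$. Intuitively, $e$ plays the role of an adjoined identity (a ``test vector'' from which each element of $S$ can be recovered), and assigning it degree $\varepsilon$ is forced: any other choice would make left multiplication by $s$ fail to land in $X_{\deg(s)}$.

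For each $s \in S$, define $\lambda_s \colon X \to X$ by $\lambda_s(e) = s$ and $\lambda_s(t) = st$ for $t \in S$. Since $s \cdot 0 = 0$ in $S$, the map $\lambda_s$ is pointed. I would next verify that $\lambda_s \in \mathcal{T}(X)_\alpha$ for every $s \in S_\alpha \setminus \{0\}$, by checking $\lambda_s(X_\beta) \subseteq X_{\alpha\beta}$ for every $\beta \in \Gamma$: on elements of $S_\beta$ this is just the defining grading property of $S$ (the case $st = 0$ is harmless, since $0 \in X_{\alpha\beta}$), while on $e$ one has $\lambda_s(e) = s \in S_\alpha = X_{\alpha\varepsilon}$. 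Moreover $\lambda_0 = 0_{\mathcal{T}'}$, which lies in every component $\mathcal{T}(X)_\alpha$ and hence can be treated as being of any degree. The homomorphism identity $\lambda_{ss'} = \lambda_s \circ \lambda_{s'}$ then reduces to associativity of $S$: on $t \in S$ both sides give $(ss')t = s(s't)$, and on $e$ both give $ss'$.

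Injectivity comes from the evaluation trick: if $\lambda_s = \lambda_{s'}$, then $s = \lambda_s(e) = \lambda_{s'}(e) = s'$. The main (small) obstacle is reconciling the need for a separating point $e$ (without which left multiplications by distinct elements can agree, for instance in the presence of annihilators or a missing identity) with the requirement that the embedding be graded. This is resolved by placing $e$ in the $\varepsilon$-component, which is the unique choice compatible with the grading on the maps $\lambda_s$.
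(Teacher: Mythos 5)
Your proposal is correct and is essentially the paper's own argument: the fresh symbol $e$ placed in degree $\varepsilon$ is exactly the adjoined identity $1$ of $S^1$ used in the paper, and the maps $\lambda_s$, the gradedness check, the reduction of the homomorphism property to associativity, and the injectivity-via-evaluation-at-$e$ step all coincide with the paper's proof.
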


\begin{proof}
Let $X= S^1:= S\cup\{1\}$ be the monoid obtained by adjoining an identity element $1$ to $S$. Then letting $\deg(1) = \varepsilon$, turns $X$ into a $\Gamma$-graded set, under the grading induced from that of $S$. For each $s \in S$ define $\theta_s:X \rightarrow X$ by $\theta_s(x)= sx$, and let 
\begin{align*}
\psi: S & \longrightarrow \mathcal{T}^{\gr}(X)\\ 
s & \longmapsto \theta_s.
\end{align*}
To show that this is a well-defined graded function, let $\alpha, \beta \in \Gamma$, $s \in S_\alpha$, and $x \in X_\beta$. If $sx = 0$, then $\theta_s(x) = 0 \in X_{\alpha\beta}$. Otherwise $\deg(\theta_s(x)) = \alpha\beta$, and so, once again, $\theta_s(x) \in X_{\alpha\beta}$. Thus $\theta_s \in \mathcal{T}(X)_\alpha$, from which it follows that $\psi$ is well-defined and graded. Moreover, $\psi$ is injective, since $\theta_s=\theta_t$ implies that $s=\theta_s(1)=\theta_t(1)=t$, for all $s,t \in S$. Finally, it is easy to see that $\psi$ is a homomorphism.
\end{proof}

We note that the ``regular" transformation semigroup $\mathcal{T}(X)$ would not have worked in the above context, since every grading on $\mathcal{T}(X)$ is trivial. To see this, note that if $\phi \in \mathcal{T}(X)$ is any constant map (i.e., one whose image has cardinality $1$), then $\phi^2 = \phi$, and hence $\deg(\phi)=\varepsilon$ in any grading on $\mathcal{T}(X)$. Thus, for all nonzero $\psi \in \mathcal{T}(X)$ we have 
\[\deg(\phi)\deg(\psi) = \deg(\phi\psi) = \deg(\phi) = \varepsilon,\] 
which implies that $\deg(\psi) = \varepsilon$. Similar reasoning shows that every grading on $\mathcal{T}'(X)$ is trivial as well.

A recurring theme of this paper is that there tends to be a close connection between the structure of $S$ and that of $S_\varepsilon$, for a graded semigroup $S$. We give the first two instances of this next. (See also, e.g., Proposition~\ref{bvgfjsirdsw} and Theorem~\ref{dadesthm}.)

Recall that for an inverse semigroup $S$, \emph{the natural partial order} $\leq$ on $S$ is defined by $s\leq t$ ($s,t \in S$) if $s=tu$ for some $u \in E(S)$. Equivalently, $s\leq t$ if $s=ut$ for some $u \in E(S)$. (See~\cite[\S 5.2]{howie} for more details.) In particular, if $u,v \in E(S)$, then $u \leq v$ amounts to $u=uv=vu$. Recall also that an inverse semigroup (with  zero) $S$ is called \emph{$0$-$E$-unitary} if for all $s \in S$ and $u \in E(S)\setminus \{0\}$, such that $u \leq s$, one has $s \in E(S)$.

\begin{prop} \label{0euni}
Let $S$ be a $\Gamma$-graded inverse semigroup. Then $S$ is $0$-$E$-unitary if and only if the inverse semigroup $S_{\varepsilon}$ is $0$-$E$-unitary.
\end{prop}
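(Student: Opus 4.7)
The plan is to reduce both directions of the equivalence to the single fact that in a graded inverse semigroup the nonzero idempotents all lie in $S_\varepsilon$, and that the natural partial order on $S_\varepsilon$ inherited from $S$ agrees with the intrinsic one.

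First I would record a couple of elementary observations. If $u \in E(S) \setminus \{0\}$, then $u = u^2$ forces $\deg(u) = \deg(u)^2$ in $\Gamma$, hence $\deg(u) = \varepsilon$; equivalently, $E(S) = E(S_\varepsilon)$. Next, since the natural partial order on an inverse semigroup is defined by $s \leq t \iff s = tv$ for some $v \in E(S)$, and since $E(S) = E(S_\varepsilon)$, the natural partial order on $S_\varepsilon$ coincides with the restriction of the natural partial order on $S$. In particular, for $s \in S_\varepsilon$ and $u \in E(S_\varepsilon)$, the statement $u \leq s$ has the same meaning in both semigroups.

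The crucial step is then the following \emph{grading lemma} (really an observation): if $0 \neq u \leq s$ with $u \in E(S)$, then $s \in S_\varepsilon$. Indeed, writing $u = sv$ with $v \in E(S)$, we have $sv \neq 0$, so $\deg(u) = \deg(s)\deg(v) = \deg(s) \cdot \varepsilon = \deg(s)$; but $\deg(u) = \varepsilon$, so $\deg(s) = \varepsilon$.

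With these facts in place, both implications are quick. For $(\Rightarrow)$, take $s \in S_\varepsilon$ and $0 \neq u \in E(S_\varepsilon)$ with $u \leq s$ in $S_\varepsilon$; by the agreement of orders this is the same as $u \leq s$ in $S$, so the $0$-$E$-unitary property of $S$ yields $s \in E(S) = E(S_\varepsilon)$. For $(\Leftarrow)$, take $s \in S$ and $0 \neq u \in E(S)$ with $u \leq s$ in $S$; the grading lemma gives $s \in S_\varepsilon$, and then $u \leq s$ also in $S_\varepsilon$, so the $0$-$E$-unitary property of $S_\varepsilon$ forces $s \in E(S_\varepsilon) = E(S)$. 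I do not anticipate any real obstacle; the only point that requires care is verifying that the partial orders match and that the degree computation for $u \leq s$ correctly pins $\deg(s)$ to $\varepsilon$, which is why the nonvanishing hypothesis $u \neq 0$ (built into the definition of $0$-$E$-unitary) is essential.
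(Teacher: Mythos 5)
Your proposal is correct and follows essentially the same route as the paper: both directions rest on the degree computation showing that $0 \neq u \leq s$ with $u \in E(S)$ forces $\deg(s) = \varepsilon$, together with the identity $E(S) = E(S_\varepsilon)$. Your additional remarks about the compatibility of the natural partial orders are a harmless elaboration of what the paper leaves implicit.
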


\begin{proof}
Suppose that $S_{\varepsilon}$ is $0$-$E$-unitary. Let $s \in S$ and $u \in E(S)\setminus \{0\}$ such that $u \leq s$, i.e., $u=sv$ for some $v \in E(S)$. Then $\deg(v) = \varepsilon = \deg(s)\deg(v)$, and so $s \in S_{\varepsilon}$. Thus, by hypothesis, $s \in E(S_{\varepsilon})$, from which it follows that $S$ is $0$-$E$-unitary.

The converse follows from the fact that $E(S_{\varepsilon}) = E(S)$.
\end{proof}
 
\begin{prop}\label{hfbvjkwofhbasks}
Let $S$ be a $\Gamma$-graded regular semigroup. Then sending each left (respectively, right) ideal $I$ of $S$ to $I_\varepsilon:=S_\varepsilon \cap I$ gives a one-to-one inclusion-preserving correspondence between the left (respectively, right) ideals of $S$ and the left (respectively, right) ideals of $S_\varepsilon$. 
\end{prop}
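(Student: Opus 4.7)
The plan is to produce an explicit inverse to the map $I \mapsto I_\varepsilon$, namely $J \mapsto SJ$, and then verify that these two assignments are mutually inverse; inclusion-preservation on both sides is automatic. I will carry out the argument for left ideals, and note that the right-ideal case is entirely symmetric (replacing $SJ$ by $JS$ and $ts$ by $st$ in what follows).

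The first step is to check that $I_\varepsilon = S_\varepsilon \cap I$ is a left ideal of $S_\varepsilon$ whenever $I$ is a left ideal of $S$; this is immediate, since $S_\varepsilon \cdot I_\varepsilon \subseteq S_\varepsilon \cap S I \subseteq S_\varepsilon \cap I = I_\varepsilon$. Next, and this is the heart of the argument, I would show that $I = SI_\varepsilon$ for any left ideal $I$ of $S$. The inclusion $SI_\varepsilon \subseteq I$ is trivial because $I_\varepsilon \subseteq I$ and $I$ is a left ideal. For the reverse inclusion, fix $s \in I$ and, using regularity, pick an inner inverse $t \in S$ with $sts = s$. Then $ts$ is idempotent: $(ts)(ts) = t(sts) = ts$, so either $ts = 0$, in which case $s = sts = 0 \in I_\varepsilon \subseteq SI_\varepsilon$, or $ts \neq 0$, in which case $\deg(ts) = \varepsilon$ (since every nonzero idempotent lies in $S_\varepsilon$, as noted earlier in the paper). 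In the latter case $ts \in S_\varepsilon$, and also $ts \in I$ because $I$ is a left ideal, so $ts \in I_\varepsilon$; writing $s = s(ts)$ then exhibits $s \in S I_\varepsilon$.

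The remaining step is to show that for any left ideal $J$ of $S_\varepsilon$, the ideal $I := SJ$ of $S$ satisfies $I_\varepsilon = J$. The inclusion $J \subseteq SJ$ uses local units: since $S$ is regular it has local units, so for each $j \in J$ there is an idempotent $u \in S_\varepsilon$ with $uj = j$, giving $j \in SJ$; combined with $J \subseteq S_\varepsilon$ this gives $J \subseteq I_\varepsilon$. Conversely, suppose $x \in SJ \cap S_\varepsilon$, write $x = sj$ with $s \in S$, $j \in J$. If $sj = 0$ then $x = 0 \in J$; otherwise $s \neq 0$, $j \neq 0$, and $\deg(j) = \varepsilon$, so $\deg(s) = \deg(s)\deg(j) = \deg(sj) = \varepsilon$. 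Thus $s \in S_\varepsilon$, and hence $x = sj \in S_\varepsilon J \subseteq J$. Combined with the previous step, this shows that $I \mapsto I_\varepsilon$ and $J \mapsto SJ$ are inverse bijections, and both clearly preserve inclusions.

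The main obstacle is the step $I \subseteq SI_\varepsilon$, which crucially uses both regularity (to exhibit the idempotent $ts$) and the fact that idempotents automatically have degree $\varepsilon$; without these two ingredients, one could not push an arbitrary element of $I$ back into $S \cdot I_\varepsilon$. Everything else is a routine verification, the only subtle point being the careful treatment of the zero element in both directions of the correspondence.
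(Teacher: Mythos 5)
Your proposal is correct and follows essentially the same route as the paper's proof: the inverse assignment $J \mapsto SJ$, the identity $I = SI_\varepsilon$ obtained by writing $s = s(ts)$ with $ts$ an idempotent of degree $\varepsilon$, and the degree comparison showing $(SJ)_\varepsilon \subseteq J$. Your use of local units to get $J \subseteq (SJ)_\varepsilon$ is just the paper's regularity argument ($s = (st)s$ with $st \in E(S) \subseteq S_\varepsilon$) phrased slightly differently, so there is no substantive divergence.
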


\begin{proof}
We treat only the case of left ideals, as the proof for right ideals is very similar.  

Clearly, $I_\varepsilon$ is a left ideal of $S_\varepsilon$, for each left ideal $I$ of $S$, and the map $I \mapsto I_\varepsilon$ is inclusion-preserving. Now, for each left ideal $J$ of $S_\varepsilon$ it is easy to see that $SJ$ is a left ideal of $S$. We conclude the proof by showing that $SI_\varepsilon = I$ and $(SJ)_\varepsilon = J$ for each relevant left ideal.

Let $I$ be a left ideal of $S$. Then clearly $SI_\varepsilon \subseteq I$. For the opposite inclusion, let $s\in I \setminus \{0\}$. Then $s=sts$ for some $t \in S$, and so comparing the degrees gives $\deg(ts)=\varepsilon$. Thus $s=sts\in SI_\varepsilon$, from which it follows that $SI_\varepsilon = I$.

Next, let $J$ be a left ideal of $S_\varepsilon$, and let $s\in J \setminus \{0\}$. Writing $s=sts$ for some $t\in S$, a degree comparison again gives $st\in S_\varepsilon$, and so $s=sts \in S_\varepsilon \cap SJ = (SJ)_\varepsilon$. It follows that $J \subseteq (SJ)_\varepsilon$. Now let $r \in (SJ)_\varepsilon \setminus \{0\}$. Then $r=st$ for some $s \in S$ and $t \in J$. Since $\deg(r) = \varepsilon = \deg(t)$, necessarily $s \in S_\varepsilon$, and so $r \in J$. Thus $(SJ)_\varepsilon = J$.
\end{proof}
  
We show in Example~\ref{gfbchfkhidsbhr} that the above proposition cannot be extended to two-sided ideals. 

\begin{example} \label{mcalistereg}
Recall that the McAlister inverse semigroup $M_n$ (for $n$ a positive integer) is the  inverse semigroup generated by $\{x_1, \dots, x_n\}$, subject to the relations $x_i x_j^{-1}=0=x_i^{-1} x_j$, for all $i\neq j$. (See~\cite[\S 9.4]{Lawson} for more details.) Then sending $x_i \mapsto 1$ and $x_i^{-1} \mapsto -1$ for each $1 \leq i \leq n$ induces a $\Z$-grading on $M_n$ (see Example~\ref{jgjgjejjiii}).

Using the normal form for elements of $M_n$ \cite[Proposition 9.4.11]{Lawson}, it is easy to show that $(M_n)_0=E(M_n)$. Thus, by Proposition~\ref{0euni}, each McAlister inverse semigroup is $0$-$E$-unitary, giving an alternative quick proof of this well-known fact. Moreover, by Proposition~\ref{hfbvjkwofhbasks}, the left (and right) principal ideals of $M_n$ are in one-to-one correspondence with the sets $\{u\in E(M_n) \mid u\leq v\}$, for $v \in E(M_n)$.
\end{example}

\subsection{Strongly graded semigroups} \label{strgrsect}

Let us next introduce \emph{strongly graded} semigroups--a particularly interesting class of graded semigroups, which we study in detail throughout the rest of the paper. 

\begin{deff} \label{strgrdef}
Let $S$ be a $\Gamma$-graded semigroup. We say that $S$ is \emph{strongly $\Gamma$-graded} if $S_\a S_\b =S_{\a\b}$ for all $\a,\b\in \G$.  
\end{deff}

Here are a couple of simple examples of strongly graded semigroups.

\begin{example}
Let $\Gamma'$ be any group, and let $T=\Gamma' \cup \{0\}$ (the \emph{group with zero} corresponding to $\Gamma'$). For any group homomorphism $\phi: \Gamma' \rightarrow \Gamma$, it is easy to see that $T$ is a strongly $\Gamma$-graded (inverse) semigroup if and only if $\phi$ is surjective.  
\end{example}

\begin{example}
Let $I \subseteq \mathbb{Z}$ be an interval, i.e., a subset with the property that if $i,k \in I$ and $i < j <k$ for some $i,j,k \in \mathbb{Z}$, then $j\in I$. Set $B : =(I\times I) \cup \{0_B\}$, and define multiplication on $B$ by
\[(p,q)(s,t) =
\left\{ \begin{array}{ll}
(p,t) & \text{if } q=s   \\
0_B & \text{otherwise,} 
\end{array}\right.\] 
and \[(p,q) \cdot 0_B = 0_B = 0_B\cdot (p,q)\] for all $(p,q), (s,t) \in B \setminus \{0_B\}$. Then it is easy to see that $B$ is an inverse semigroup, with $(p,q)^{-1} = (q,p)$ for each $(p,q) \in B \setminus \{0_B\}$. Moreover, sending $(p,q) \mapsto p-q$ turns $B$ into a $\mathbb Z$-graded semigroup.

Now, if $I=\mathbb Z$, then for all $i,j \in \mathbb{Z}$ and $(p,q) \in B_{i+j}$ we have $(p,p-i) \in B_{i}$, $(p-i,q) \in B_{j}$, and $(p,p-i)(p-i,q) = (p,q)$. Thus, $B_{i+j} \subseteq  B_{i}B_{j}$, and so $B$ is strongly $\mathbb{Z}$-graded in this case. It is also easy to show that, conversely, if $I \neq \mathbb{Z}$, then $B$ is not strongly $\mathbb{Z}$-graded.
\end{example}

Recall that given a semigroup $S$ and $s,t \in S$ we write $s \, \mathscr{L} \, t$ if $S^1 s = S^1 t$ and $s \, \mathscr{R} \, t$ if $s S^1 = t S^1$, where $S^1$ denotes the monoid obtained from $S$ by adjoining an identity element. These (along with $\mathscr{J}$, $\mathscr{H}$, and $\mathscr{D}$, which we will not review here) are known as \emph{Green's relations}.

For a $\Gamma$-graded unital ring $A$ (see \S \ref{semigroupringsec} for more details), being strongly graded is equivalent to $1\in A_{\alpha} A_{\alpha^{-1}}$ for all $\alpha \in \Gamma$. The following is an analogue of this statement for semigroups, which gives various convenient characterisations of strongly graded semigroups.

\begin{prop}\label{hgfgftgr}
Let $S$ be a $\G$-graded semigroup with local units. Then the following are equivalent.

\begin{enumerate}[\upshape(1)]

\item $S$ is strongly graded;

\smallskip

\item $S_\alpha S_{\alpha^{-1}}=S_\varepsilon$ for every $\alpha \in  \G$;

\smallskip

\item $S_\alpha S_{\alpha^{-1}}$ contains all the local units of $S$, for every $\alpha \in  \G$.

\end{enumerate}

Moreover, if $S$ is an inverse semigroup, then these are also equivalent to the following.

\begin{enumerate}[\upshape(1)]  \setcounter{enumi}{3}

\item $E(S)= \{ss^{-1} \mid s \in S_\alpha\}$, for every $\alpha \in  \G$;

\smallskip

\item $E(S)= \{s^{-1}s \mid s \in S_\alpha\}$, for every $\alpha \in  \G$;

\smallskip

\item For all $u\in E(S)$ and $\alpha \in \Gamma$, there exists $s\in S_\alpha$ such that $u\, \mathscr{L} \, s$;

\smallskip

\item For all $u\in E(S)$ and $\alpha \in \Gamma$, there exists $s\in S_\alpha$ such that $u\, \mathscr{R} \, s$.
\end{enumerate}
\end{prop}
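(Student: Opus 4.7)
The plan is to establish the cycle (1) $\Rightarrow$ (2) $\Rightarrow$ (3) $\Rightarrow$ (1) for the general equivalence, and then, in the inverse semigroup case, to prove the two further cycles (1) $\Rightarrow$ (6) $\Rightarrow$ (4) $\Rightarrow$ (3) and (1) $\Rightarrow$ (7) $\Rightarrow$ (5) $\Rightarrow$ (3). The implication (1) $\Rightarrow$ (2) is immediate from the definition of strong grading. For (2) $\Rightarrow$ (3), observe that every $u \in E(S)$ lies in $S_\varepsilon$, since $u = u^2$ forces $\deg(u) = \varepsilon$; hence every local unit of $S$ lies in $S_\varepsilon = S_\alpha S_{\alpha^{-1}}$.

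The main content is (3) $\Rightarrow$ (1). Given $s \in S_{\alpha\beta} \setminus \{0\}$, let $u \in E(S)$ be a left local unit, so that $us = s$. By (3) we can write $u = ab$ with $a \in S_\alpha$ and $b \in S_{\alpha^{-1}}$, and then
\[s \; = \; us \; = \; a(bs),\]
where $bs \in S_{\alpha^{-1}} S_{\alpha\beta} \subseteq S_\beta$, so $s \in S_\alpha S_\beta$. The reverse inclusion $S_\alpha S_\beta \subseteq S_{\alpha\beta}$ is part of the definition of a graded semigroup, and (1) follows.

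For the inverse semigroup part, the key input is the standard fact that in an inverse semigroup the $\mathscr{R}$-class of $s$ contains a unique idempotent, namely $ss^{-1}$; in particular, if $u \in E(S)$ and $u \,\mathscr{R}\, s$ then $u = ss^{-1}$, and dually $u \,\mathscr{L}\, s$ gives $u = s^{-1}s$. For (1) $\Rightarrow$ (6), take $u \in E(S)$ and $\alpha \in \Gamma$; by the already-established (2) write $u = ab$ with $a \in S_\alpha$ and $b \in S_{\alpha^{-1}}$, and set $s := ua$. Then $s \in S_\varepsilon S_\alpha \subseteq S_\alpha$, while $us = u^2 a = s$ and $sb = uab = u^2 = u$ show $uS^1 = sS^1$, i.e., $u \,\mathscr{R}\, s$. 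The implication (6) $\Rightarrow$ (4) is then immediate from the cited fact, and its reverse inclusion $\{ss^{-1} \mid s \in S_\alpha\} \subseteq E(S)$ is automatic. For (4) $\Rightarrow$ (3), since $\deg(s^{-1}) = \deg(s)^{-1}$, writing $u = ss^{-1}$ exhibits every idempotent (in particular every local unit) as an element of $S_\alpha S_{\alpha^{-1}}$. The $\mathscr{L}$-variants (1) $\Rightarrow$ (7) $\Rightarrow$ (5) $\Rightarrow$ (3) are proved by the dual argument, using $s := bu$ in place of $ua$.

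I do not expect any of these steps to be a substantive obstacle; the one place that requires a small idea is (3) $\Rightarrow$ (1), where the local units hypothesis is invoked to produce the needed factorisation of an arbitrary homogeneous element, and the rest amounts to the familiar identification of $\mathscr{R}$- and $\mathscr{L}$-classes in an inverse semigroup together with some book-keeping.
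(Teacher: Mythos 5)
Your proof is correct in substance. For (1)$\Leftrightarrow$(2)$\Leftrightarrow$(3) you follow essentially the same route as the paper (your (3)$\Rightarrow$(1) uses a left local unit and factors $u=ab$ with $a\in S_\alpha$, $b\in S_{\alpha^{-1}}$, where the paper uses a right local unit and factors through $S_{\beta^{-1}}S_\beta$; these are mirror images). For the inverse-semigroup conditions your route is genuinely different: the paper goes (2)$\Rightarrow$(4) by the explicit computation $u=st=(stt^{-1})(tt^{-1}s^{-1})$ and then derives (6) and (7) from (4) and (5), whereas you go to the Green's-relation conditions first, replacing $a$ by $s:=ua$ and invoking the standard fact that the unique idempotent in the $\mathscr{R}$-class (resp.\ $\mathscr{L}$-class) of $s$ is $ss^{-1}$ (resp.\ $s^{-1}s$). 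Your approach buys a cleaner conceptual argument at the cost of importing that structural fact; the paper's is self-contained. Two small repairs are needed in your write-up. First, you have transposed the labels (6) and (7) throughout: the argument you give for ``(1)$\Rightarrow$(6)'' establishes $u\,\mathscr{R}\,s$, which is condition (7) in the statement, and correctly feeds into (4) via $u=ss^{-1}$; as literally written, ``(6)$\Rightarrow$(4) by the cited fact'' is false, since $u\,\mathscr{L}\,s$ yields $u=s^{-1}s$, i.e.\ (5). The two cycles should read (1)$\Rightarrow$(7)$\Rightarrow$(4)$\Rightarrow$(3) and (1)$\Rightarrow$(6)$\Rightarrow$(5)$\Rightarrow$(3); with that relabelling all seven conditions are still tied together, so nothing mathematical is lost. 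Second, in the dual argument the element $s:=bu$ lies in $S_{\alpha^{-1}}$, not $S_\alpha$; you should first apply (2) at $\alpha^{-1}$ (writing $u=ab$ with $a\in S_{\alpha^{-1}}$, $b\in S_\alpha$) so that $bu\in S_\alpha$ as required.
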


\begin{proof}
(1) $\Rightarrow$ (2) This follows immediately from Definition~\ref{strgrdef}. 

(2) $\Rightarrow$ (3) Since the local units of $S$ are idempotent, they are elements $S_\varepsilon$. Thus if $S_\varepsilon= S_\alpha S_{\alpha^{-1}}$ for some $\alpha \in \Gamma$, then $S_\alpha S_{\alpha^{-1}}$ contains the local units. 

(3) $\Rightarrow$ (1) Let $\alpha, \beta \in \Gamma$ and $s\in S_{\alpha \beta}$. Then there is a local unit $u \in S$ such that $su=s$. By (3), $u=rt$, for some $r \in S_{\beta^{-1}}$ and $t\in S_{\beta}$, and so $s=su=(sr)t\in S_\alpha S_\beta$. It follows that $S_{\alpha \beta}=S_\alpha S_\beta$, and so $S$ is strongly graded. 

\smallskip

Suppose now that $S$ is an inverse semigroup. 

(2) $\Rightarrow$ (4) Let $\alpha \in \Gamma$. Then clearly $\{ss^{-1} \mid s \in S_\alpha\} \subseteq E(S)$. For the opposite inclusion, let $u\in E(S)$. Then, by (2), $u=st$ for some $s\in S_\alpha$ and $t \in S_{\alpha^{-1}}$. Since $u \in E(S)$, we have $u=u^{-1}$, and hence 
\begin{equation}\label{gfgfgfhdshd}
u=st=st(st)^{-1}=(stt^{-1})(tt^{-1}s^{-1}),
\end{equation}
where $stt^{-1}\in S_\alpha$. It follows that $u\in \{ss^{-1} \mid s \in S_\alpha\}$. 

(4) $\Rightarrow$ (2) Let $s\in S_{\varepsilon}$ and $\alpha \in \Gamma$. Then, by (4), $s^{-1} s = t^{-1} t$ for some $t \in S_{\alpha^{-1}}$. Hence $s =ss^{-1} s=(st^{-1})t \in S_\alpha S_{\alpha^{-1}}$. Thus  $S_{\varepsilon} = S_\alpha S_{\alpha^{-1}}$.

(4) $\Leftrightarrow$ (5) This follows from the fact that $s \in S_\alpha$ if and only if $s^{-1} \in S_{\alpha^{-1}}$.

(5) $\Rightarrow$ (6) Given $u \in E(S)$ and $\alpha \in \Gamma$, we have $u = s^{-1}s$ for some $s \in S_{\alpha}$, by (5). Since $s = su$, it follows that $u\, \mathscr{L} \, s$.

(6) $\Rightarrow$ (5) Given $u \in E(S)$ and $\alpha \in \Gamma$, we have $u = st$ for some $s \in S$ and $t \in S_{\alpha}$, by (6), where necessarily $s \in S_{\alpha^{-1}}$. Then $u = (stt^{-1})(tt^{-1}s^{-1})$, by (\ref{gfgfgfhdshd}), where $tt^{-1}s^{-1} \in S_{\alpha}$. Therefore $u \in \{s^{-1}s \mid s \in S_\alpha\}$, from which (5) follows.

(4) $\Leftrightarrow$ (7) Since (5) is equivalent to (6), this follows, by symmetry.
\end{proof}

The next result gives a first glimpse at the special nature of strongly graded semigroups.

\begin{prop}\label{bvgfjsirdsw}
Let $S$ be a strongly $\Gamma$-graded semigroup with local units. Then the following hold. 
\begin{enumerate}[\upshape(1)]

\item $S$ is a regular semigroup if and only if $S_\varepsilon$ is a regular semigroup. 

\smallskip

\item $S$ is an inverse semigroup if and only if $S_\varepsilon$ is an inverse semigroup. 

\end{enumerate}
\end{prop}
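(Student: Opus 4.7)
The plan is to treat (1) directly using strong grading together with local units, and then deduce (2) from (1) by observing that all nonzero idempotents of $S$ automatically lie in $S_\varepsilon$. Of the four implications to be proved, three (both in (2), plus the forward direction of (1)) reduce to bookkeeping about degrees; the only substantive step is producing an inner inverse in $S$ for an arbitrary element, given an inner inverse machinery in $S_\varepsilon$.

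For the forward direction of (1), given a nonzero $s \in S_\varepsilon$ with inner inverse $t \in S$ satisfying $sts = s$, comparing degrees forces $\varepsilon = \deg(t)$, so $t \in S_\varepsilon$, and $S_\varepsilon$ is regular. For the converse, I fix a nonzero $s \in S_\alpha$ and pick a local unit $u \in E(S) \subseteq S_\varepsilon$ with $us = s$. By Proposition~\ref{hgfgftgr}, $u \in S_\alpha S_{\alpha^{-1}}$, so I write $u = pq$ with $p \in S_\alpha$ and $q \in S_{\alpha^{-1}}$. Then $qs \in S_{\alpha^{-1}} S_\alpha \subseteq S_\varepsilon$, and by hypothesis there exists $r \in S_\varepsilon$ with $(qs)r(qs) = qs$. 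Setting $t := rq \in S_{\alpha^{-1}}$, the identity $sts = s$ is verified by the one-line chain $sts = (pqs)(rq)s = p(qsrqs) = p(qs) = pqs = us = s$, proving that $S$ is regular.

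For (2), the key observation is that any nonzero idempotent $e \in S$ satisfies $\deg(e)^2 = \deg(e^2) = \deg(e)$, forcing $\deg(e) = \varepsilon$. Consequently $E(S) = E(S_\varepsilon)$, and combined with (1) and the standard fact that a regular semigroup is inverse precisely when its idempotents commute, both directions of (2) follow at once. The main obstacle is the converse in (1); the crux is the idea of premultiplying $s$ by the factor $q$ to land inside the subsemigroup $S_\varepsilon$ where regularity is available, and then reassembling an inner inverse for $s$ out of the factorisation $u = pq$ of a local unit guaranteed by the strong grading. Once this is set up, the verification is purely computational.
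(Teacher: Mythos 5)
Your proposal is correct and follows essentially the same route as the paper: the forward direction of (1) by degree comparison, the converse by factoring a local unit through $S_\alpha S_{\alpha^{-1}}$ (via strong grading) to push the element into $S_\varepsilon$ and then reassembling an inner inverse, and (2) via $E(S)=E(S_\varepsilon)$ together with the commuting-idempotents criterion. The only cosmetic difference is that you use a left local unit $us=s$ where the paper uses a right one $s=su$, which is a mirror image of the same computation.
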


\begin{proof}
(1) Suppose that $S$ is regular, and let $s\in S_\varepsilon \setminus \{0\}$. Then there exists $t \in S$ such that $sts=s$ and $tst=t$. A degree comparison shows that $\deg(s)=\deg(t)=\varepsilon$, and hence $S_\varepsilon$ is regular. 

Conversely, suppose that $S_\varepsilon$ is regular, and let $s\in S \setminus \{0\}$. Then $s\in S_\alpha$,  for some $\alpha\in \Gamma$, and $s=su$, for a local unit $u \in S_\varepsilon$. Since $S$ is strongly graded, $u=rt$ for some $r \in S_{\alpha^{-1}}$ and $t\in S_\alpha$. Now, $sr\in S_\varepsilon$, and so $sr=srpsr$ for some $p\in S_\varepsilon$, as $S_\varepsilon$ is regular. Hence 
\[s=su=srt=srpsrt=s(rp)s,\] which shows that $S$ is regular.  
  
(2) Recall that a regular semigroup $S$ is an inverse semigroup if and only if $E(S)$ is commutative \cite[Theorem 5.1.1]{howie}. Since $E(S)=E(S_\varepsilon)$, the statement follows from (1). 
\end{proof}

Next we use the natural partial order to define a weaker version of ``strongly graded" for inverse semigroups, which will be of use in the sequel.

\begin{deff} \label{locallystrgrdef}
Let $S$ be a $\Gamma$-graded inverse semigroup. We say that $S$ is \emph{locally strongly $\Gamma$-graded} if for all $\a,\b\in \G$ and $s\in S_{\a\b} \setminus \{0\}$, there exists $t \in S_\alpha S_\beta \setminus \{0\}$ such that  $t\leq s$.
\end{deff}

Clearly any strongly graded inverse semigroup is locally strongly graded. Now, given a $\Gamma$-graded inverse semigroup $S$ and $\alpha \in \G$, set $E(S)_\alpha:=\{ss^{-1} \mid s \in S_\alpha\}$. According to Proposition~\ref{hgfgftgr}, $S$ is strongly graded if and only if $u\in E(S)_\alpha$ for all $u\in E(S)$ and $\alpha\in \Gamma$. In the next proposition we show that $S$ is locally strongly graded if and only if a local version of this condition holds, and provide other equivalent statements. 

\begin{prop}\label{sussa}
Let $S$ be a $\G$-graded inverse semigroup. Then the following are equivalent.

\begin{enumerate}[\upshape(1)]
\item $S$ is locally strongly graded;

\smallskip

\item For all $\alpha \in \Gamma$ and $u \in E(S) \setminus \{0\}$, there exists $v \in E(S)_\alpha \setminus \{0\}$ such that $v \leq u$;

\smallskip

\item For all $\a,\b\in \G$ and $s\in S_{\a\b} \setminus \{0\}$, there exists $u \in  E(S) \setminus \{0\}$ such that  $u\leq s^{-1}s$ and $su\in S_\alpha S_\beta$;

\smallskip

\item For all $\a,\b\in \G$ and $s\in S_{\a\b} \setminus \{0\}$, there exists $u \in  E(S) \setminus \{0\}$ such that  $u\leq ss^{-1}$ and $us\in S_\alpha S_\beta$.
\end{enumerate}
\end{prop}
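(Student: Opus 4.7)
The plan is to establish all four conditions as equivalent by proving the pairs (1)$\Leftrightarrow$(3), (1)$\Leftrightarrow$(4), and (1)$\Leftrightarrow$(2). The first two equivalences are near-reformulations of (1), obtained by using the two alternate descriptions of the natural partial order in an inverse semigroup ($t\leq s$ iff $t=se$ iff $t=fs$ for some $e,f\in E(S)$), and the bulk of the work lies in the equivalence with (2).

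For (1)$\Leftrightarrow$(3), I would fix $s\in S_{\alpha\beta}\setminus\{0\}$ and set up a correspondence between the data ``$t\in S_\alpha S_\beta\setminus\{0\}$ with $t\leq s$'' and the data ``$u\in E(S)\setminus\{0\}$ with $u\leq s^{-1}s$ and $su\in S_\alpha S_\beta$'', via the mutually inverse assignments $u:=s^{-1}t$ and $t:=su$. If $t=sf$ with $f\in E(S)$, then $u=s^{-1}sf$ is a product of commuting idempotents and hence idempotent, with $u\cdot s^{-1}s=s^{-1}s\cdot s^{-1}s\cdot f=u$ and $su=sf=t$. Going back, $t:=su$ satisfies $t\leq s$ because $u$ is idempotent. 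The key nondegeneracy observation is that for $u\leq s^{-1}s$ one has $s^{-1}(su)=(s^{-1}s)u=u$, so $u\neq 0\Leftrightarrow su\neq 0$. The equivalence (1)$\Leftrightarrow$(4) is then proved symmetrically via $u:=ts^{-1}$ and $t:=us$.

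The equivalence (1)$\Leftrightarrow$(2) is the substantive part. For (1)$\Rightarrow$(2), I apply (1) to $u\in E(S)\setminus\{0\}$, viewed as an element of $S_\varepsilon=S_{\alpha\alpha^{-1}}$, to obtain $t\in S_\alpha S_{\alpha^{-1}}\setminus\{0\}$ with $t\leq u$. Since $t$ sits below an idempotent, writing $t=ue$ with $e\in E(S)$ and using commutativity of idempotents gives $t^2=t$, so $t$ is itself idempotent. Writing $t=ab$ with $a\in S_\alpha$ and $b\in S_{\alpha^{-1}}$, I trim $a$ by setting $s:=abb^{-1}\in S_\alpha$; then $s^{-1}=bb^{-1}a^{-1}$, and using $(bb^{-1})^{2}=bb^{-1}$ one computes
\[ss^{-1}=abb^{-1}\cdot bb^{-1}a^{-1}=abb^{-1}a^{-1}=tt^{-1}=t,\]
with the final equality following from $t\in E(S)$. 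Hence $v:=ss^{-1}=t\in E(S)_\alpha\setminus\{0\}$ and $v\leq u$. For (2)$\Rightarrow$(1), given $s\in S_{\alpha\beta}\setminus\{0\}$, I apply (2) with $\beta^{-1}$ in place of $\alpha$ to the nonzero idempotent $s^{-1}s$: this yields $v=rr^{-1}\in E(S)_{\beta^{-1}}\setminus\{0\}$ with $r\in S_{\beta^{-1}}$ and $v\leq s^{-1}s$. Then $t:=sv=(sr)r^{-1}$ lies in $S_{\alpha\beta\beta^{-1}}S_\beta=S_\alpha S_\beta$, satisfies $t\leq s$, and is nonzero by the nondegeneracy observation from the previous paragraph.

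The main obstacle will be (1)$\Rightarrow$(2): one must first recognise that any element below an idempotent in an inverse semigroup is automatically idempotent, and then discover the trimming $a\mapsto abb^{-1}$ that produces an element $s\in S_\alpha$ whose associated idempotent $ss^{-1}$ coincides exactly with the nonzero idempotent $t$ already sitting below $u$. Once these two observations are in place, the remaining implications reduce to routine degree bookkeeping.
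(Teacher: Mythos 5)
Your proof is correct and uses essentially the same ingredients as the paper's: the translation between elements $t\leq s$ lying in $S_\alpha S_\beta$ and idempotents below $s^{-1}s$ (resp.\ $ss^{-1}$), and the ``trimming'' $ab\mapsto (abb^{-1})(bb^{-1}a^{-1})$, which is exactly the paper's computation establishing $v\in E(S)_\alpha$. The only difference is organizational --- you prove three bidirectional equivalences with (1) as a hub, while the paper runs the cycle (1)$\Rightarrow$(4)$\Rightarrow$(3)$\Rightarrow$(2)$\Rightarrow$(1) --- so the mathematical content is the same.
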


\begin{proof}
(1) $\Rightarrow$ (4) Let $\a,\b\in \G$ and $s\in S_{\a\b}\setminus \{0\}$. By (1), there exists $t \in S_\alpha S_\beta \setminus \{0\}$ such that  $t\leq s$. Then $t=vs$ for some $v \in E(S) \setminus \{0\}$. Setting $u = tt^{-1}$, and using the fact that $E(S)$ is commutative, gives \[u = vss^{-1}v = vss^{-1} \leq ss^{-1}.\] Moreover, $us = t(t^{-1}s) \in S_\alpha S_\beta$, since $\deg(t^{-1}) = \b^{-1}\a^{-1}$.

(4) $\Rightarrow$ (3) Let $\a,\b\in \G$ and $s\in S_{\a\b}\setminus \{0\}$. By (4), there exists $u \in  E(S) \setminus \{0\}$ such that  $u\leq ss^{-1}$ and $us\in S_\alpha S_\beta$. Then we have $us = uss^{-1}s = s(s^{-1}us)$, where $s^{-1}us \in E(S)\setminus \{0\}$ and $s^{-1}us \leq s^{-1}s$.

(3) $\Rightarrow$ (2) Let $\alpha\in \G$ and $u \in E(S) \setminus \{0\}$. Then $u \in S_\varepsilon = S_{\alpha\alpha^{-1}}$. By (3), there exists $v \in  E(S) \setminus \{0\}$ such that  $v\leq u$ and $uv\in S_{\alpha}S_{\alpha^{-1}}$. Since $uv = v$, it follows that $v = st$ for some for some $s\in S_\alpha$ and $t \in S_{\alpha^{-1}}$. The computation in (\ref{gfgfgfhdshd}) then gives $v = (stt^{-1})(t t^{-1} s^{-1})$, where $stt^{-1}\in S_\alpha$. Hence $v \in E(S)_\alpha$. 

(2) $\Rightarrow$ (1) Let $\a,\b\in \G$ and $s\in S_{\a\b}\setminus \{0\}$. By (2), there exists $v \in E(S)_{\beta^{-1}} \setminus \{0\}$ such that $v \leq s^{-1}s$. Then $v=rr^{-1}$, where $r\in S_{\beta^{-1}}$ and $r^{-1} \in S_{\beta}$. Therefore, setting $t = sv$, we have $t \leq s$ and $t=(sr)r^{-1} \in S_\alpha S_\beta$. Moreover, $v \neq 0$ and $v \leq s^{-1}s$ imply that $t \neq 0$.
\end{proof}

In Proposition~\ref{meme} we exhibit a class of locally strongly graded inverse semigroups that are not strongly graded. Also, in \S \ref{hfgfhdhksdkkls} we discuss locally strongly graded graph inverse semigroups.

\subsection{Graded $S$-sets}\label{grfdrferfsa}

Next we introduce $S$-sets, which are of central importance to much of what follows.

Let $S$ be a semigroup and $X$ a set. Then $X$ is a \emph{left} \emph{$S$-set}, or a \emph{left} \emph{$S$-act}, if there is an action $S \times X \to X$ of $S$ on $X$, such that $s(tx)=(st)x$ for all $s,t \in S$ and $x \in X$. We say that a left $S$-set $X$ is \emph{unital} if $SX=X$ (i.e, for all $x \in X$ there exist $y \in X$ and $s \in S$ such that $sy=x$). Also, a left $S$-set $X$ is called \emph{pointed} if there exists a ``zero" element $0_X \in X$ such that $0x = 0_X$ for all $x \in X$ (in which case, necessarily $s0_X = 0_X$ for all $s \in S$). \emph{Right (unital pointed) $S$-set} is defined analogously. We say that $X$ is an $S$-\emph{biset} if it is both a left $S$-set and a right $S$-set, and $(sx)t=s(xt)$ for all $s,t\in S$ and $x \in X$. Note that this condition implies that the zero elements corresponding to the left and right $S$-set structures of an $S$-biset are equal. Throughout the paper, unless mentioned otherwise, all $S$-sets are assumed to be pointed. 

For left (respectively, right) $S$-sets $X$ and $Y$, a function $\phi:X\rightarrow Y$ is called an $S$-\emph{map} if $\phi(sx)=s\phi(x)$ (respectively, $\phi(xs) = \phi(x)s$) for all $s\in S$ and $x\in X$. If $X$ and $Y$ are $S$-bisets, then $\phi:X\rightarrow Y$ is called an $S-S$-\emph{map} if $\phi(sx)=s\phi(x)$ and $\phi(xs) = \phi(x)s$ for all $s\in S$ and $x\in X$. Note that in each of these situations we have $\phi(0_X) = 0_Y$.

To extend the $S$-set construction to the graded setting, let us now assume that $S$ is $\Gamma$-graded. A left $S$-set $X$ is \emph{$\Gamma$-graded} if there is a function $\phi : X \setminus \{0_X\} \to \Gamma$ that satisfies $\phi (sx) = \deg(s)\phi (x)$, for all $s \in S$ and $x \in X$, whenever $sx \neq 0_X$. For each $\alpha \in \Gamma$, we set $X_\alpha := \phi^{-1}(\alpha)\cup \{0\}$. Equivalently, $X$ is a $\Gamma$-\emph{graded} left $S$-set if there exist subsets $X_\alpha$ of $X$ ($\alpha \in \Gamma$) such that 
\[ X  = \bigcup_{\alpha \in \G} X_\alpha, \]
where $S_\a X_\b \subseteq X_{\a\b}$ for all $\a,\b \in \Gamma$, and $X_\a \cap X_\b = \{0_X\}$ for all distinct $\a,\b \in \Gamma$. For all $\alpha \in \Gamma$ and $x\in X_\alpha \setminus \{0\}$, we say that the \emph{degree} of $x$ is $\alpha$ and write $\deg(x)=\alpha$. \emph{$\Gamma$-graded} is defined analogously for right $S$-sets. An $S$-biset $X$ is \emph{$\Gamma$-graded} if it is $\Gamma$-graded as a left $S$-set, and additionally $X_\beta S_\alpha\subseteq X_{\a\b}$ for all $\a,\b \in \Gamma$.

For a semigroup $S$, we denote by $\Moc{S}$ the category whose objects are unital (pointed) left $S$-sets, and whose morphisms are $S$-maps. We denote by $\Gc{S}$ the category whose objects are $\Gamma$-graded unital (pointed) left $S$-sets, and whose morphisms are \emph{graded} $S$-maps, that is, $S$-maps $\phi : X \to Y$ such that $\phi(X_\alpha) \subseteq Y_\alpha$ for all $\alpha \in \Gamma$. We refer the reader to~\cite{bergman} for general category-theoretic information.

Given a $\Gamma$-graded left $S$-set $X$, for all $\alpha, \beta \in \Gamma$ let $X(\alpha)_\beta :=X_{\beta\alpha}$. Then 
\begin{equation}\label{butyestergdtgf}
X(\alpha): = \bigcup_{\beta \in \Gamma}X(\alpha)_\beta
\end{equation}
is a $\Gamma$-graded left $S$-set, called the \emph{$\alpha$-shift} of $X$. That is, $X(\alpha) = X$ as sets, but the grading is ``shifted" by $\alpha$. For a $\Gamma$-graded right $S$-set $X$, the \emph{$\alpha$-shift} is defined by setting $X(\alpha)_\beta : = X_{\alpha\beta}$ for all $\beta \in \Gamma$. This construction leads to a \emph{shift functor} for each $\alpha \in \Gamma$, defined by
\begin{align}\label{hfbvhjfkushke}
\mathcal{T}_\alpha: \Gc{S} &\longrightarrow \Gc{S} \\
X &\longmapsto X(\alpha),\notag
\end{align}
which takes each morphism to itself. It is easy to see that for all $\alpha, \beta \in \Gamma$, the functor $\mathcal{T}_\alpha$ is an isomorphism, and that $\mathcal{T}_\alpha \mathcal{T}_\beta= \mathcal{T}_{\alpha\beta}$. Shifting plays a crucial role in our theory of graded semigroups, similarly to the analogous concept in the graded ring theory. 

The following lemma gives a characterisation of strongly graded semigroups $S$ in terms of their actions on graded $S$-sets, which will be needed in \S\ref{iuhfgbdftie}. 

\begin{lemma}\label{strongly-gr-lemma}
Let $S$ be a $\Gamma$-graded semigroup with local units. Then the following are equivalent.
\begin{enumerate} [\upshape(1)]
\item $S$ is strongly graded; 

\smallskip

\item $S_\alpha X_\beta = X_{\alpha\beta}$ for all $\alpha, \beta \in \Gamma$ and unital $\Gamma$-graded left $S$-sets $X$;

\smallskip

\item $S X_\alpha = X$ for all $\alpha \in \Gamma$ and unital $\Gamma$-graded left $S$-sets $X$.
\end{enumerate}
\end{lemma}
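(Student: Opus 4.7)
The plan is to prove the cycle of implications $(1) \Rightarrow (2) \Rightarrow (3) \Rightarrow (1)$, using a carefully chosen test $S$-set for the last step.

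For $(1) \Rightarrow (2)$, note that the inclusion $S_\alpha X_\beta \subseteq X_{\alpha\beta}$ is built into the definition of a graded left $S$-set, so only the reverse inclusion requires work. Given a nonzero $x \in X_{\alpha\beta}$, I would first use unitality of $X$ combined with local units of $S$ to produce an idempotent $u \in E(S)$ with $ux = x$: write $x = sy$ with $s \in S$, $y \in X$, and then choose a local unit $u$ with $us = s$. Since every idempotent lies in $S_\varepsilon$, and $S$ is strongly graded, we have $u \in S_\varepsilon = S_\alpha S_{\alpha^{-1}}$, so $u = pq$ with $p \in S_\alpha$ and $q \in S_{\alpha^{-1}}$. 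Then $x = p(qx)$, where the element $qx$ must be nonzero (else $x = 0_X$), and a degree comparison forces $qx \in X_\beta$. Hence $x \in S_\alpha X_\beta$.

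The step $(2) \Rightarrow (3)$ is essentially bookkeeping: for any $\alpha \in \Gamma$,
\[
SX_\alpha = \bigcup_{\gamma \in \Gamma} S_\gamma X_\alpha = \bigcup_{\gamma \in \Gamma} X_{\gamma \alpha} = X.
\]
For $(3) \Rightarrow (1)$, I take the test set $X = S$, viewed as a $\Gamma$-graded left $S$-set via left multiplication, with distinguished element $0_S$. This $S$-set is unital since $S$ has local units (every element has a local unit absorbing it on the left). Applying (3) yields $SS_\beta = S$ for every $\beta \in \Gamma$. Given a nonzero $s \in S_{\alpha\beta}$, write $s = tu$ with $t \in S$ and $u \in S_\beta$, both necessarily nonzero; comparing degrees in $s = tu$ gives $\deg(t)\beta = \alpha\beta$, hence $t \in S_\alpha$, and so $s \in S_\alpha S_\beta$. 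Combined with the automatic inclusion $S_\alpha S_\beta \subseteq S_{\alpha\beta}$, this establishes strong grading.

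The main conceptual point, rather than any real obstacle, is the observation in $(1) \Rightarrow (2)$ that one needs a local unit \emph{acting on the element $x$} (not merely on some $s \in S$) in order to exploit the factorisation $S_\varepsilon = S_\alpha S_{\alpha^{-1}}$; the rest is routine degree-chasing, mirroring the classical ring-theoretic argument but with care taken to rule out accidentally zero products at each step.
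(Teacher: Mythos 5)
Your proof is correct and follows essentially the same route as the paper: the same cycle of implications, the same test set $X=S$ for $(3)\Rightarrow(1)$, and the same degree-chasing throughout. The only (cosmetic) deviation is in $(1)\Rightarrow(2)$, where you factor the local unit $u$ satisfying $ux=x$ through $S_\varepsilon = S_\alpha S_{\alpha^{-1}}$, whereas the paper factors the element $s$ (with $x=sy$) directly as $s=rt$ with $r\in S_\alpha$; both exploit strong grading in the same way.
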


\begin{proof}
(1) $\Rightarrow$ (2) Let $X$ be a unital $\Gamma$-graded left $S$-set, $\alpha, \beta \in \Gamma$, and $x \in  X_{\alpha\beta}$. Since $X$ is unital, $sy = x$ for some $s \in S$ and $y \in X$.  Let $\gamma = \alpha^{-1}\deg(s)$. By (1), we can then find $r \in S_\alpha$ and $t \in S_\gamma$ such that $s = rt$. Then, comparing degrees on the two sides of $rty=x$, gives $ty \in X_\beta$, and so $x \in S_\alpha X_\beta$. It follows that $S_\alpha X_\beta = X_{\alpha\beta}$.

(2) $\Rightarrow$ (3) By (2), we have $S_{\beta\alpha^{-1}}X_\alpha = X_{\beta}$ for all $\alpha,\beta \in \Gamma$ and unital $\Gamma$-graded left $S$-sets $X$, from which (3) follows.

(3) $\Rightarrow$ (1) Let $\alpha,\beta \in \Gamma$, and let $s \in S_{\alpha\beta}$. Since $S$ has local units, $X=S$ is a unital left $S$-set. Thus, by (3), we can find $r \in S$ and $t \in S_\beta$ such that $s = rt$. Then $\deg(r) = \alpha\beta\beta^{-1} = \alpha$, and so $s \in S_\alpha S_\beta$. It follows that $S_\alpha S_\beta = S_{\alpha \beta}$, proving (1).
\end{proof}

Next we recall tensor products for left $S$-sets, and extend them to the graded setting. Let $S$ be a semigroup, and let $X$, respectively $Y$, be a right, respectively left, $S$-set. Then $X\otimes_S Y$ is defined as the cartesian product $X\times Y$ modulo the equivalence relation generated by identifying $(xs,y)$ with $(x,sy)$ for all $x\in X, y\in Y, s\in S$. The equivalence class of $(x,y)$ is denoted by $x\otimes y$. (See \cite[\S 8]{howie} for more details.) If $X$ is an $S$-biset, then $X\otimes_S Y$ can be made into a left $S$-set, by defining $s(x\otimes y) :=sx\otimes y$. (It follows from \cite[Proposition 8.1.8]{howie} that this is well-defined.) Similarly, if $Y$ is an $S$-biset, then $X\otimes_S Y$ can be made into a right $S$-set, by defining $(x\otimes y)s :=x\otimes ys$. Note that 
$x\otimes 0_Y=0_X\otimes y=0_X\otimes 0_Y$ for all $x \in X$ and $y \in Y$, and that this acts as the zero element in each case, making $X\otimes_S Y$ pointed. Next, if $S$, $X$, and $Y$ are $\Gamma$-graded, then $X\otimes_S Y$ can be turned into a $\Gamma$-graded (left or right) $S$-set by letting
\begin{equation} \label{tensorgrade}
(X\otimes_S Y)_\alpha = \big \{x\otimes y \mid \deg(x)\deg(y)=\alpha\big \} \cup \{0_X\otimes 0_Y\}
\end{equation}
for all $\alpha \in \Gamma$. (Again, it follows from \cite[Proposition 8.1.8]{howie} that this is well-defined.) We note that if $T$ is a subsemigroup of $S$, $X$ is a ($\Gamma$-graded) $S$-biset, and $Y$ is a ($\Gamma$-graded) left $S$-set, then a slight modification of the above construction turns $X\otimes_T Y$ into a ($\Gamma$-graded) left $S$-set, and analogously for right $S$-sets.

Given a $\Gamma$-graded semigroup $S$, clearly $S$ and $S_\alpha$ are $S_\varepsilon$-bisets for each $\alpha \in \Gamma$. This fact, together with the tensor construction, can be used to give yet another characterisation of strongly graded semigroups. 
 
\begin{prop}\label{dtbhsdjkw}
Let $S$ be a $\Gamma$-graded semigroup with local units. Then the following are equivalent. 
\begin{enumerate}[\upshape(1)]

\item $S$ is strongly graded;

\smallskip 

\item For every $\alpha \in \Gamma$, the function $ \phi_\alpha: S_\alpha \otimes_{S_{\varepsilon}} S_{\alpha^{-1}}  \rightarrow S_\varepsilon$ defined by $x \otimes y \mapsto xy,$ is a surjective $S_{\varepsilon}-S_{\varepsilon}$-map;

\smallskip 

\item For every $\alpha \in \Gamma$, the function $\psi_\alpha: S \otimes_{S_{\varepsilon}} S_{\alpha}  \rightarrow S$ defined by $x \otimes y \mapsto xy,$ is a graded surjective $S-S_\varepsilon$-map. 
\end{enumerate}

Furthermore, if $S$ has common local units then the above maps are bijective. 
\end{prop}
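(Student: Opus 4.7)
The plan is to first verify that $\phi_\alpha$ and $\psi_\alpha$ are well-defined with the stated equivariance and grading properties, then derive the surjectivity equivalences (1)$\Leftrightarrow$(2) and (1)$\Leftrightarrow$(3) by reading off the images of these maps, and finally upgrade surjectivity to bijectivity under common local units via a single tensor manipulation that exploits strong grading and common local units together.

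For well-definedness, I would invoke the universal property of the tensor product: associativity in $S$ gives $(xs)y = x(sy)$ for $s \in S_\varepsilon$, so multiplication descends to both $\phi_\alpha$ and $\psi_\alpha$. The bimodule axioms and, in the case of $\psi_\alpha$, preservation of degrees (with the grading on $S \otimes_{S_\varepsilon} S_\alpha$ given by (\ref{tensorgrade})) then follow by direct inspection.

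For the equivalences, the image of $\phi_\alpha$ is $S_\alpha S_{\alpha^{-1}}$, so (1)$\Leftrightarrow$(2) is immediate from Proposition~\ref{hgfgftgr}. For (1)$\Leftrightarrow$(3), the image of $\psi_\alpha$ is $SS_\alpha = \bigcup_{\beta \in \Gamma} S_\beta S_\alpha$; if $S$ is strongly graded this collapses to $\bigcup_{\beta} S_{\beta\alpha} = S$, and conversely $SS_\alpha = S$ for every $\alpha$ forces each nonzero $s \in S_{\beta\alpha}$ to factor as $s = rt$ with $t \in S_\alpha$ and (by degree comparison) $r \in S_\beta$, yielding $S_{\beta\alpha} \subseteq S_\beta S_\alpha$.

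The main obstacle is injectivity. The crucial manoeuvre, under common local units and strong grading, is as follows: given $x, x' \in S_\alpha$ and $y, y' \in S_{\alpha^{-1}}$ with $xy = x'y'$, I would pick a common left local unit $u \in E(S) \cap S_\varepsilon$ for $\{x, x'\}$, and factor $u = ab$ with $a \in S_\alpha$ and $b \in S_{\alpha^{-1}}$ using strong grading. Since $bx \in S_{\alpha^{-1}}S_\alpha = S_\varepsilon$ (and likewise for $bx'$), these products can be pushed across the tensor, yielding
\[
x \otimes y \;=\; ux \otimes y \;=\; a \otimes b(xy) \;=\; a \otimes b(x'y') \;=\; ux' \otimes y' \;=\; x' \otimes y',
\]
which proves injectivity of $\phi_\alpha$; when $xy = 0$, the same computation degenerates to $a \otimes 0 = 0$ in the tensor, so the fibre over $0$ is a single point as well. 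Injectivity of $\psi_\alpha$ will follow from an identical manipulation, using that $xy = x'y' \neq 0$ forces $x, x' \in S_\beta$ for a common $\beta$ (by degree comparison on the product) and factoring $u = ab$ with $a \in S_\beta$, $b \in S_{\beta^{-1}}$.
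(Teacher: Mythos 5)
Your proposal is correct and follows essentially the same route as the paper: read off the images of $\phi_\alpha$ and $\psi_\alpha$ to get the surjectivity equivalences, then use a common local unit factored through strong grading to push an $S_\varepsilon$-element across the tensor for injectivity. The only (cosmetic) difference is that the paper takes a common \emph{right} local unit for $y,y'$ and moves $yr\in S_\varepsilon$ leftward via $x\otimes yrt = xyr\otimes t$, whereas you take a common \emph{left} local unit for $x,x'$ and move $bx\in S_\varepsilon$ rightward; the two computations are mirror images and both are valid.
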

 
\begin{proof}
(1) $\Rightarrow$ (2)  From the definition of tensor product it is easy to check that $\phi_\alpha$ is well-defined and is a $S_{\varepsilon}-S_{\varepsilon}$-map. If $S$ is strongly graded, then $S_\alpha S_{\alpha^{-1}}=S_\varepsilon$ for every $\alpha \in \Gamma$, which implies that $\phi_\alpha$ is surjective.

(2) $\Rightarrow$ (3) Again it is easy to see that $\psi_\alpha$ is well-defined and an $S-S_{\varepsilon}$-map. For all $\alpha, \beta \in \Gamma$, using (\ref{tensorgrade}), we have
\[\psi_\alpha \big ((S \otimes_{S_{\varepsilon}} S_{\alpha})_\beta \big)=\psi_\alpha \big (S_{\beta\alpha^{-1}} \otimes_{S_{\varepsilon}} S_{\alpha} \big)\subseteq S_{\beta\alpha^{-1}} S_\alpha\subseteq S_{\beta},\]
which shows that $\psi_\alpha$ is a graded map. To show that $\psi_\alpha$ is surjective, let $s\in S\setminus \{0\}$, and let $u \in E(S)$ be a local unit such that $s=su$. By (2), $u=rt$, where $r\in S_{\alpha}$ and $t\in S_{\alpha^{-1}}$. Therefore, $\psi_\alpha(sr \otimes t)=srt=su=s$, as desired. 
 
(3) $\Rightarrow$ (1) Let $s\in S_\varepsilon$ and $\alpha \in \Gamma$. Then, by (3), there are $r\in S$ and $t\in S_{\alpha}$ such that $\psi_\alpha (r\otimes t)=rt=s$. It follows that $r \in S_{\alpha^{-1}}$, and thus $S_\varepsilon = S_{\alpha^{-1}} S_\alpha$. Therefore, $S$ is strongly graded, by Proposition~\ref{hgfgftgr}. 
 
Now suppose that $S$ has common local units and that $S$ is strongly graded. To show that $\phi_\alpha$ is injective (for $\alpha \in \Gamma$), suppose further that $\phi_\alpha(x \otimes y) = \phi_\alpha(x' \otimes y')$ for some $x \otimes y, x' \otimes y' \in S_\alpha \otimes_{S_{\varepsilon}} S_{\alpha^{-1}}$. Then $xy=x'y'$. Let $u \in S_\varepsilon$ be a common local unit for $y$ and $y'$, so that $yu=y$ and $y'u=y'$. Since $S$ is strongly graded, $u=rt$ for some $r \in S_\alpha$ and $t \in S_{\alpha^{-1}}$. Since $yr, y'r \in S_\varepsilon$, we then have 
 \[x\otimes y = x\otimes yu= x\otimes yrt= xyr \otimes t= x'y'r \otimes t= x' \otimes y'rt= x' \otimes y'u=x' \otimes y',\] 
showing $\phi_\alpha$ is injective, and hence bijective. The argument for $\psi_\alpha$ being injective is similar. 
\end{proof}

We conclude this section with an example, where we build new semigroups from graded ones.

\begin{example}
Let $S$ be a $\Gamma$-graded semigroup, where $\Gamma$ is a totally-ordered abelian group (e.g., $\mathbb Z$). Then it is easy to see that 
\[S_{\geq 0}:=\bigcup_{\alpha\geq 0}S_\alpha  \text{ \,\,\, and  \,\,\, } S_{\leq 0}:=\bigcup_{\alpha\leq 0}S_\alpha,\] 
are semigroups, and that $S_{\geq \beta}:=\bigcup_{\alpha\geq \beta}S_\alpha$ is an $S_{\geq 0}$-biset for each $\beta \in \Gamma$. When $S$ is strongly graded, tensor product calculus similar to that in Proposition~\ref{dtbhsdjkw} can be used here. Specifically, $S_{\geq \beta} \otimes_{S_{\geq 0}} S_{\geq \gamma} \rightarrow S_{\geq \beta\gamma}$, defined by $x\otimes y \mapsto xy$, is a surjective $S_{\geq 0}-S_{\geq 0}$-map, for all $\beta, \gamma \in \Gamma$. 
\end{example}
 
\section{Smash products and matrices} 
 
\subsection{Smash product semigroups}

In this section we prove the first of our main results, which is an analogue of a theorem \cite[Theorem 2.2]{cohenmont} about graded modules over algebras. It says that for a $\Gamma$-graded semigroup $S$ with local units, the category $\Gc{S}$ of graded unital left $S$-sets is isomorphic to the category of non-graded unital left sets over a certain other semigroup. This construction requires the smash product, which we introduce next.
 
\begin{deff}\label{hghghgqq}
Let $S$ be a $\Gamma$-graded semigroup. We define the \emph{smash product of $S$ with $\Gamma$} as
\[S\# \Gamma :=\{s P_\alpha \mid s\in S \setminus \{0\}, \alpha \in \Gamma \}\cup \{0\}.\]
Also, define a binary operation on $S\# \Gamma$ by
\begin{equation} \label{smashmult1}
(s P_\alpha) (t P_\beta) =
\left\{ \begin{array}{ll}
stP_\beta & \text{if } st \not = 0  \text{ and } t\in S_{\alpha \beta^{-1}} \\
0 & \text{otherwise } 
\end{array}\right.
\end{equation}
and \[0=0^2=(s P_\alpha)0=0(s P_\alpha),\] for all $s,t \in S$ and $\alpha, \beta \in \Gamma$.
\end{deff}

We will next show that $S\# \Gamma$ is a semigroup, which inherits various characteristics of $S$. It is easy to see that $S\# \Gamma$ is $\Gamma$-graded via
\begin{align}\label{ghfgdthfyr1}
S\# \Gamma \backslash \{0\} &\longrightarrow \Gamma \\\notag
sP_\alpha &\longmapsto \deg(s).  \notag
\end{align}

\begin{lemma}\label{hgyfhyfhf1}
Let $S$ be a $\Gamma$-graded semigroup. Then the following hold.
\begin{enumerate}[\upshape(1)]

\item $S\# \Gamma$ is a semigroup.

\smallskip 

\item $E(S\# \Gamma)=\big \{u P_\alpha \mid u \in E(S) \backslash \{0\}, \alpha \in \Gamma \big \} \cup \{0\}$.

\smallskip 

\item $S$ has local units if and only if $S\#\Gamma$ has local units. 

\smallskip 

\item $S$ is an inverse semigroup if and only if $S\#\Gamma$ is an inverse semigroup. 
\end{enumerate}
\end{lemma}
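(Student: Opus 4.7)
For part (1), my plan is to verify associativity by a direct case analysis of $((sP_\alpha)(tP_\beta))(rP_\gamma)$ versus $(sP_\alpha)((tP_\beta)(rP_\gamma))$ using the multiplication rule (\ref{smashmult1}). The key observation is that the degree constraints align: the ``inner'' constraints $\deg(t) = \alpha\beta^{-1}$ and $\deg(r) = \beta\gamma^{-1}$ together force $\deg(tr) = \alpha\gamma^{-1}$ whenever $tr \neq 0$, which is exactly the outer constraint needed on the other side, and conversely. Combined with associativity in $S$, each side evaluates to $strP_\gamma$ when all the constraints hold and to $0$ otherwise. The only delicate cases are the ``one side already zero'' ones, for example when $st = 0$ but $tr \neq 0$; there, $s(tr)=(st)r=0$ in $S$ forces the right-associated product to be $0$ as well.

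For part (2), I will compute $(sP_\alpha)^2$, which equals $s^2 P_\alpha$ when $s^2 \neq 0$ and $s \in S_{\alpha\alpha^{-1}} = S_\varepsilon$, and equals $0$ otherwise. Hence a nonzero idempotent in $S\#\Gamma$ must come from $s \in E(S)\setminus\{0\}$, and conversely every nonzero $u \in E(S)$ automatically lies in $S_\varepsilon$ (its degree satisfies $\alpha = \alpha^2$ in $\Gamma$), so $uP_\alpha$ is idempotent for every $\alpha \in \Gamma$. For part (3), in the forward direction I will take $u, v \in E(S)$ with $us = s = sv$ and check directly from (\ref{smashmult1}) that $uP_{\deg(s)\alpha}$ and $vP_\alpha$ serve as left and right local units of $sP_\alpha$; the degree shift $\beta = \deg(s)\alpha$ on the left side is exactly what the multiplication rule demands. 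Conversely, given local units $uP_\beta, vP_\gamma$ for $sP_\varepsilon$ in $S\#\Gamma$, part (2) gives $u, v \in E(S)\setminus\{0\}$, and reading off the equations $(uP_\beta)(sP_\varepsilon) = sP_\varepsilon = (sP_\varepsilon)(vP_\gamma)$ forces $us = s = sv$, exhibiting local units for $s$ in $S$.

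For part (4), my plan is to exhibit inverses directly. In the forward direction, for $sP_\alpha \neq 0$ I will check that $s^{-1}P_{\deg(s)\alpha}$ satisfies both inverse equations, using the multiplication rule and the identities $ss^{-1}s = s$, $s^{-1}ss^{-1} = s^{-1}$ from $S$. For uniqueness, if $tP_\beta$ is another inner inverse of $sP_\alpha$, the degree conditions in the two inverse equations force $\deg(t) = \deg(s)^{-1}$ and $\beta = \deg(s)\alpha$, while the element-level equations force $sts = s$ and $tst = t$; uniqueness of the inverse in $S$ then gives $t = s^{-1}$. In the reverse direction, applying this analysis to $sP_\varepsilon$ for any $s \in S\setminus\{0\}$ extracts a unique element $t \in S$ with $sts = s$ and $tst = t$, so $S$ is inverse. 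The main bookkeeping point throughout is that whenever one multiplies in $S\#\Gamma$, the ``index'' $P_\bullet$ can shift in a way dictated by $\deg(s)$; once this shift pattern is absorbed, each verification reduces to an identity in $S$.
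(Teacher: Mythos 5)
Your proposal is correct and follows the paper's proof closely in parts (1)--(3): the same associativity computation based on the observation that the two degree constraints align, the explicit computation of $(sP_\alpha)^2$ for (2) (which the paper simply declares immediate from the definition), and the same choice of local units $uP_{\deg(s)\alpha}$ and $vP_\alpha$ in (3). The only real divergence is in part (4). The paper exhibits the inner inverse $s^{-1}P_{\deg(s)\alpha}$ to get regularity of $S\#\Gamma$ and then invokes the characterisation of inverse semigroups as regular semigroups with commuting idempotents, using part (2) to pass commutativity of $E(S)$ to $E(S\#\Gamma)$ and back; you instead verify existence and uniqueness of two-sided inverses directly. Your route works, since the degree conditions in $(sP_\alpha)(tP_\beta)(sP_\alpha)=sP_\alpha$ pin down $\deg(t)=\deg(s)^{-1}$ and $\beta=\deg(s)\alpha$, so uniqueness reduces to uniqueness of inverses in $S$. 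In the converse direction, though, you should spell out the one step you gloss over: given a second element $t'\in S$ with $st's=s$ and $t'st'=t'$, the relation $st's=s\neq 0$ forces $\deg(t')=\deg(s)^{-1}$, so $t'P_{\deg(s)}$ is itself an inverse of $sP_\varepsilon$ in $S\#\Gamma$, and uniqueness there yields $t'=t$. The paper's route avoids this lifting argument at the cost of citing the regular-plus-commuting-idempotents theorem; yours is more self-contained but needs that extra line.
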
 

\begin{proof}
(1) It suffices to check that multiplication of nonzero elements is associative. Thus let $r, s,t \in S$ and $\alpha, \beta, \gamma \in \Gamma$. Then
\begin{align*}
((r P_\alpha) (s P_\beta)) (t P_\gamma) 
& = \left\{ \begin{array}{ll}
rstP_\gamma & \text{if } rst \not = 0, s\in S_{\alpha \beta^{-1}},  \text{ and } t\in S_{\beta\gamma^{-1}} \\
0 & \text{otherwise } 
\end{array}\right.  \\
& = \left\{ \begin{array}{ll}
rst P_\gamma  & \text{if } rst \not = 0, st\in S_{\alpha\gamma^{-1}}, \text{ and } t\in S_{\beta\gamma^{-1}}\\
0 & \text{otherwise } 
\end{array}\right. \\
& = (r P_\alpha) ((s P_\beta) (t P_\gamma)), 
\end{align*}
giving the desired conclusion.

(2) This follows immediately from the definition of the multiplication on $S\# \Gamma$.

(3) Suppose that $S$ has local units, and let $s \in S \setminus \{0\}$ and $\alpha \in \Gamma$. Then there are idempotents $u,v\in E(S)$ such that $s=vs=su$. Hence
\begin{equation*}
sP_\alpha = v P_{\beta \alpha} sP_\alpha = sP_\alpha u P_\alpha,
\end{equation*}
where $\beta = \deg(s)$, and $v P_{\beta \alpha}, u P_\alpha \in E(S\# \Gamma)$, by (2). It follows that $S\# \Gamma$ has local units. A similar argument gives the converse. 

(4) Suppose that $S$ is an inverse semigroup, and let $s \in S \setminus \{0\}$ and $\alpha \in \Gamma$. Then 
\begin{equation*}
(sP_\alpha)(s^{-1}P_{\beta \alpha})(sP_\alpha)=sP_\alpha,
\end{equation*}
where $\beta = \deg(s)$, from which it follows that $S\#\Gamma$ is a regular semigroup. Since $E(S)$ is commutative, by \cite[Theorem 5.1.1]{howie}, so is $E(S \# \Gamma)$, by (2). Thus $S\#\Gamma$ is an inverse semigroup.

Conversely, suppose that $S\#\Gamma$ is an inverse semigroup, and let $s \in S \setminus \{0\}$. Then 
\begin{equation*}
(sP_\varepsilon)(tP_{\alpha})(sP_\varepsilon)=sP_\varepsilon
\end{equation*}
for some $t \in S$ and $\alpha \in \Gamma$, which implies that $sts=s$. Thus $S$ is regular. Since $E(S\#\Gamma)$ is commutative, by (2), it follows that $E(S)$ is commutative, and so $S$ is an inverse semigroup.
\end{proof}

In preparation for Theorem~\ref{smashthrm}, we next relate $S$-sets to $S\#\Gamma$-sets.
 
\begin{lemma}\label{gdgdgdgdgss1}
Let $S$ be a $\Gamma$-graded semigroup and $X$ a $\Gamma$-graded left $S$-set. Then $X$ is a left $S\# \Gamma$-set via the action defined by $0x=0$, and 
\begin{equation}\label{dgbcgftgdjsaa}
(sP_\alpha)x =\left\{ \begin{array}{ll}
s x & \text{if } x \in X_\alpha \\
0 & \text{otherwise } 
\end{array}\right.
\end{equation}
for all $s \in S$, $\alpha \in \Gamma$, and $x \in X$. Moreover, if $X$ is a unital left $S$-set, then it is a unital left $S\#\Gamma$-set. 
\end{lemma}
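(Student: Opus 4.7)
The plan is to verify directly that the formula in~\eqref{dgbcgftgdjsaa} defines a semigroup action on the pointed set $X$, and then to check that unitality is preserved. The requirement $0 \cdot x = 0_X$ is built into the definition, so the core content is associativity, namely $\bigl((sP_\alpha)(tP_\beta)\bigr)x = (sP_\alpha)\bigl((tP_\beta)x\bigr)$ for all nonzero $s,t \in S$, all $\alpha,\beta \in \Gamma$, and all $x \in X$; once these cases are handled, the cases involving the zero symbol of $S\#\Gamma$ or $x = 0_X$ collapse to $0_X = 0_X$ automatically.

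To verify associativity, I would perform a case analysis based on the location of $x$ and the degree of $t$, recalling that every nonzero element of $S$ and of $X$ is homogeneous in the respective grading. If $x \notin X_\beta$, then $(tP_\beta)x = 0_X$, so the right-hand side vanishes; on the left, either the product $(sP_\alpha)(tP_\beta)$ is already zero by~\eqref{smashmult1}, or it equals $stP_\beta$, which annihilates $x$ since $x \notin X_\beta$. When $x \in X_\beta$, we have $(tP_\beta)x = tx$, and I would further split on whether $t \in S_{\alpha\beta^{-1}}$ and on whether $st = 0$ or $tx = 0$. The only algebraic identity used is the associativity of the original $S$-action, $s(tx) = (st)x$; all the work is degree bookkeeping. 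The relevant observation is that if $t \in S_\gamma$ and $x \in X_\beta$ with $tx \neq 0$, then $tx \in X_{\gamma\beta}$, so the condition $tx \in X_\alpha$ is equivalent to $\gamma = \alpha\beta^{-1}$, which is exactly the condition $t \in S_{\alpha\beta^{-1}}$ appearing in~\eqref{smashmult1}. This lines the two sides up in every subcase.

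For the unital claim, assume $SX = X$ and let $x \in X$. If $x = 0_X$, then $x = 0 \cdot x \in (S\#\Gamma)X$. Otherwise, write $x = sy$ with $s \in S$ and $y \in X$; since $x \neq 0_X$, necessarily $y \neq 0_X$, so $y \in X_\gamma$ for a unique $\gamma \in \Gamma$. Then $(sP_\gamma)y = sy = x$ by~\eqref{dgbcgftgdjsaa}, so $x \in (S\#\Gamma)X$, giving $(S\#\Gamma)X = X$.

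I do not expect serious obstacles; the only real risk is a bookkeeping slip in the associativity case analysis. The cleanest presentation is probably to dispatch the case $x \notin X_\beta$ first (both sides vanish for trivial reasons), and then compare the two sides inside the case $x \in X_\beta$ by matching the condition ``$t \in S_{\alpha\beta^{-1}}$'' against the condition ``$\deg(tx) = \alpha$''.
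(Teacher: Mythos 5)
Your proposal is correct and follows essentially the same route as the paper: verify associativity by matching the condition $t \in S_{\alpha\beta^{-1}}$ (from the smash product multiplication) against the condition $tx \in X_\alpha$ via degree bookkeeping, with the zero cases dispatched trivially, and then observe that unitality transfers by tagging a preimage $y \in X_\gamma$ of $x = sy$ with $P_\gamma$. Your explicit care with the subcase $tx = 0_X$ (where $tx \in X_\alpha$ holds vacuously but both sides vanish anyway) is a point the paper glosses over, but the argument is the same.
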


\begin{proof}
Let $s,t \in S \setminus \{0\}$, $\alpha, \beta \in \Gamma$, and $x \in X$. Then 
\begin{align*}
((sP_\alpha)(tP_\beta)\big)x 
& = \left\{ \begin{array}{ll}
stx & \text{if }  t\in S_{\alpha \beta^{-1}}  \text{ and } x\in X_\beta \\
0 & \text{otherwise } 
\end{array}\right. \\ 
& = \left\{ \begin{array}{ll}
stx & \text{if }  tx \in X_\alpha  \text{ and } x\in X_\beta\\
0 & \text{otherwise } 
\end{array}\right. 
= (sP_\alpha) ((tP_\beta)x).
\end{align*}
Upon dealing with trivial cases involving zero, it follows that $X$ is a (pointed) left $S\# \Gamma$-set. The final claim is immediate.
\end{proof}

\begin{lemma}\label{gdgdgdgdgss2}
Let $S$ be a $\Gamma$-graded semigroup with local units, and let $X$ be a unital left $S\#\Gamma$-set. For all $s\in S$, $\alpha \in \Gamma$, and $x \in X$ let 
\begin{equation} \label{fdrefgdhgdaa}
X_\alpha =\big \{y\in X \mid \exists \, u P_\alpha\in E(S\#\Gamma) \text{ such that } (u P_\alpha) y=y \big\},
\end{equation}
define
\begin{equation}\label{mememein}
sx =\left\{ \begin{array}{ll}
(sP_\alpha ) x & \text{if } x \in X_\alpha  \text{ and } s \neq 0 \\
0 & \text{otherwise } 
\end{array}\right.,
\end{equation}
and define $\phi: X \backslash \{0\} \rightarrow \Gamma$ by $x \mapsto \alpha$ whenever $x \in X_\alpha$. Then $X$ is a $\Gamma$-graded unital left $S$-set via the above operations.
\end{lemma}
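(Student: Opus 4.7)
The plan is to verify, in turn, that the subsets $X_\alpha$ of (\ref{fdrefgdhgdaa}) partition $X$ (meeting only at $0$), that the $S$-action prescribed in (\ref{mememein}) is well-defined and associative, that $X$ is unital as a left $S$-set, and that the action is compatible with the grading. Throughout, the main tool is the multiplication rule (\ref{smashmult1}) of the smash product, combined with the fact that $S\#\Gamma$ has local units, guaranteed by Lemma~\ref{hgyfhyfhf1}(3).

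First I would show that $\bigcup_{\alpha} X_\alpha = X$. For $y \in X \setminus \{0\}$, the unitality of $X$ as an $S\#\Gamma$-set gives $sP_\beta \in S\#\Gamma$ and $y' \in X$ with $(sP_\beta)y' = y$; since $S\#\Gamma$ has local units there is $uP_\gamma \in E(S\#\Gamma)$ satisfying $(uP_\gamma)(sP_\beta) = sP_\beta$, whence $(uP_\gamma)y = y$ and $y \in X_\gamma$. For disjointness, suppose $y \in X_\alpha \cap X_\beta$ with $y \neq 0$ and $\alpha \neq \beta$, and pick idempotents $uP_\alpha, vP_\beta$ witnessing this. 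By Lemma~\ref{hgyfhyfhf1}(2), $u, v \in E(S)$, so $\deg(v) = \varepsilon$ and therefore $v \notin S_{\alpha\beta^{-1}}$. The rule (\ref{smashmult1}) then forces $(uP_\alpha)(vP_\beta) = 0$, yet $(uP_\alpha)(vP_\beta)y = (uP_\alpha)y = y$, giving $y = 0$, a contradiction.

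Next, the action (\ref{mememein}) is well-defined because for $x \neq 0$ the index $\alpha$ with $x \in X_\alpha$ is now unique, while for $x = 0$ one gets $(sP_\alpha)\cdot 0 = 0$ regardless of the choice of $\alpha$. To verify associativity $(st)x = s(tx)$, the key subclaim is that if $t \in S_\gamma \setminus \{0\}$ and $x \in X_\alpha \setminus \{0\}$, then $tx = (tP_\alpha)x \in X_{\gamma\alpha}$. Choosing a local unit $w \in E(S)$ with $wt = t$ and invoking (\ref{smashmult1}), one has $(wP_{\gamma\alpha})(tP_\alpha) = wt\, P_\alpha = tP_\alpha$, since $t \in S_\gamma = S_{(\gamma\alpha)\alpha^{-1}}$; hence $(wP_{\gamma\alpha})(tx) = tx$, establishing the subclaim. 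Given this, for nonzero $s \in S_\delta$, $t$, $x$ one computes $s(tx) = (sP_{\gamma\alpha})(tP_\alpha)x$, which by (\ref{smashmult1}) equals $(stP_\alpha)x = (st)x$ when $st \neq 0$ and equals $0$ otherwise; the cases where one of $s$, $t$, $x$, or $tx$ is zero are handled by direct inspection.

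The same subclaim delivers the grading compatibility $S_\gamma X_\alpha \subseteq X_{\gamma\alpha}$, and unitality $SX = X$ follows by taking, for each $y \in X_\alpha \setminus \{0\}$, the idempotent $uP_\alpha$ witnessing $y \in X_\alpha$: since $u \in E(S)$ one has $uy = (uP_\alpha)y = y$. I expect the main obstacle to be the disjointness of the $X_\alpha$'s, since this is where the particular form of the smash-product multiplication, with its condition $t \in S_{\alpha\beta^{-1}}$, enters essentially; everything else is bookkeeping that reduces cleanly to the multiplication rule (\ref{smashmult1}).
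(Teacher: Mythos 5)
Your proposal is correct and follows essentially the same route as the paper's proof: establish the partition $X=\bigcup_\alpha X_\alpha$ via unitality and local units, derive disjointness from the vanishing of $(uP_\alpha)(vP_\beta)$ for $\alpha\neq\beta$, and reduce associativity, unitality, and the containment $S_\gamma X_\alpha\subseteq X_{\gamma\alpha}$ to the smash-product multiplication rule. The only cosmetic difference is that you isolate the claim $tx=(tP_\alpha)x\in X_{\gamma\alpha}$ as a reusable subclaim, whereas the paper folds the same computation into its case-by-case verification of $s(tx)=(st)x$ and proves the grading containment separately.
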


\begin{proof}
To show that the operations are well-defined, we need to prove that 
\begin{equation} \label{unioneqn}
X = \bigcup_{\alpha \in \Gamma} X_\alpha,
\end{equation}
and $X_\alpha \cap X_\beta = \{0\}$ for all distinct $\alpha, \beta \in \Gamma$. Let $x \in X \setminus \{0\}$. Since $X$ is a unital left $S\#\Gamma$-set, there exist $s P_\alpha\in S\#\Gamma$ and $y \in X$ such that $x = (s P_\alpha) y$. Since $S$ has local units, $us=s$ for some $u \in E(S)$. Then 
\[(uP_{\beta\alpha})x = (uP_{\beta\alpha}) (s P_\alpha) y = (s P_\alpha) y = x,\] 
where $\beta = \deg(s)$. Since $uP_{\beta\alpha} \in E(S\#\Gamma)$, by Lemma \ref{hgyfhyfhf1}(2), we have $x \in X_{\beta\alpha}$, from which (\ref{unioneqn}) follows.

Next let $\alpha, \beta \in \Gamma$ be distinct, and let $x\in X_\alpha \cap X_\beta$. Then we have $x= (uP_\alpha) x = (v P_\beta) x$ for some $u,v \in S_\varepsilon$, and so \[x=(uP_\alpha) x = (uP_\alpha)(vP_\beta )x= 0x=0.\] Thus  $X_\alpha \cap X_\beta = \{0\}$, as desired.

For all $s,t \in S \setminus \{0\}$ and $x \in X$ we have
\begin{align*}
s(tx) & = \left\{ \begin{array}{ll}
(sP_\alpha)((tP_\beta)x) & \text{if } x\in X_{\beta} \text{ and } tx\in X_{\alpha} \\
0 & \text{otherwise } 
\end{array}\right.  \\ 
& = \left\{ \begin{array}{ll}
((sP_\alpha)(tP_\beta))x & \text{if } x\in X_{\beta} \text{ and } t \in S_{\alpha\beta^{-1}} \\
0 & \text{otherwise } 
\end{array}\right. \\ 
& = \left\{ \begin{array}{ll}
(stP_\beta)x & \text{if } x\in X_{\beta} \text{ and } st \neq 0\\
0 & \text{otherwise } 
\end{array}\right.  
= (st)x, 
\end{align*}
from which it follows that $X$ is a (pointed) left $S$-set. Also, given $\alpha \in \Gamma$ and $x \in X_\alpha \setminus \{0\}$, there exist $u P_\alpha\in E(S\#\Gamma)$ such that $x = (u P_\alpha) x = ux$, by the definition of $X_\alpha$, from which it follows that $X$ is a unital left $S$-set.

Finally, to show that $X$ is $\Gamma$-graded as a left $S$-set, it suffices to check that $S_\alpha X_\beta \subseteq X_{\alpha \beta}$ for all $\alpha, \beta \in \Gamma$. Thus let $s \in S_\alpha \setminus \{0\}$ and $x \in X_\beta$. Since $S$ has local units, there is an idempotent $u \in S$ such that $us=s$. Then \[sx= (sP_\beta)x = (uP_{\alpha\beta })((sP_\beta )x) \in X_{\alpha\beta },\] as desired.
\end{proof}

We are now ready to show that the category of $\Gamma$-graded left $S$-sets is isomorphic to the category of left $S\#\Gamma$-sets. In the process we will show that shifting (\ref{butyestergdtgf}) is preserved by our isomorphism. For this we first define functors $\mathcal{T}_\alpha: \Moc{S\#\Gamma} \rightarrow \Moc{S\#\Gamma}$ which behave in a manner analogous to the shifting functors (\ref{hfbvhjfkushke}) on $\Gc{S}$.

Given a $\Gamma$-graded semigroup $S$, for each $\alpha\in \Gamma$ define
\begin{align*}
\tau_\alpha: S\#\Gamma & \longrightarrow S\#\Gamma\\
sP_\beta &\longmapsto sP_{\beta\alpha}.
\end{align*}
Letting $\tau_\alpha (0) = 0$, it is easy to see that $\tau_\alpha$ is a semigroup isomorphism. Each such isomorphism $\tau_\alpha$ induces a new $S\#\Gamma$-set structure on any left $S\#\Gamma$-set $X$ via
\begin{equation}\label{boundgsfret}
(sP_\beta).x= \tau_\alpha(sP_\beta )x= (sP_{\beta\alpha}) x,
\end{equation}
for all $sP_\beta \in S\#\Gamma \setminus \{0\}$ and $x\in X$. We denote this induced left $S\# \Gamma$-set by $X(\alpha)$. It is easy to check that sending each left $S\#\Gamma$-set $X$ to $X(\alpha)$ and each morphism to itself gives an isomorphism $\mathcal{T}_\alpha: \Moc{S\#\Gamma} \rightarrow \Moc{S\#\Gamma}$ of categories. Moreover $\mathcal{T}_\alpha \mathcal{T}_\beta =\mathcal{T}_{\alpha\beta}$ for all $\alpha, \beta \in \Gamma$. 

\begin{thm} \label{smashthrm}
Let $S$ be a $\Gamma$-graded semigroup with local units. Then there is an isomorphism of categories $\mathcal{F}_{\#}: \Gc{S} \rightarrow \Moc{S\#\Gamma}$ such that the following diagram commutes for every $\a \in \G$.
\begin{equation}\label{inducedshift}
\xymatrix{\Gc{S}\ar[r]^{\mathcal{F}_{\#} \ \ \ }\ar[d]_{\mathcal{T}_{\a}}&\Moc{S\#\Gamma}\ar[d]^{\mathcal{T}_{\a}}\\
\Gc{S}\ar[r]^{\mathcal{F}_{\#} \ \ \ } &\Moc{S\#\Gamma}.}
\end{equation}
\end{thm}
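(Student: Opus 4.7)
The plan is to leverage Lemmas~\ref{gdgdgdgdgss1} and~\ref{gdgdgdgdgss2}, which already produce the object-level correspondence between $\Gamma$-graded unital left $S$-sets and unital left $S\#\Gamma$-sets. I would define $\mathcal{F}_{\#}$ on objects by Lemma~\ref{gdgdgdgdgss1}, and an inverse candidate $\mathcal{G} \colon \Moc{S\#\Gamma} \to \Gc{S}$ on objects by Lemma~\ref{gdgdgdgdgss2}. On morphisms, both functors act as the identity on the underlying pointed set function. The theorem then reduces to three verifications: that both assignments are functorial, that $\mathcal{F}_{\#}$ and $\mathcal{G}$ are mutually inverse, and that the shift square (\ref{inducedshift}) commutes.

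The functoriality checks are routine. For $\mathcal{F}_{\#}$, given a graded $S$-map $\phi \colon X \to Y$ and $x \in X_\alpha$, gradedness gives $\phi(x) \in Y_\alpha$, so that $\phi((sP_\alpha)x) = \phi(sx) = s\phi(x) = (sP_\alpha)\phi(x)$ by (\ref{dgbcgftgdjsaa}), while both sides vanish for $x \notin X_\alpha$. For $\mathcal{G}$, an $S\#\Gamma$-map $\psi$ sends $X_\alpha$ to $Y_\alpha$ by (\ref{fdrefgdhgdaa}), since $(uP_\alpha)x = x$ yields $(uP_\alpha)\psi(x) = \psi(x)$, and compatibility with the $S$-action defined in (\ref{mememein}) is immediate from compatibility with the $S\#\Gamma$-action.

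For $\mathcal{G} \circ \mathcal{F}_{\#} = \id$ on a graded $S$-set $X$, the recovered $\alpha$-component is $\{y \in X : (uP_\alpha)y = y \text{ for some } u \in E(S)\setminus\{0\}\}$, and its equality with the original $X_\alpha$ follows from (\ref{dgbcgftgdjsaa}): one direction is immediate, while the other uses unitality of $X$ to write $y = sx$ and a local unit $u$ with $us = s$, giving $(uP_\alpha)y = uy = y$ for $y \in X_\alpha$. The recovered $S$-action then agrees with the original via (\ref{mememein}) and (\ref{dgbcgftgdjsaa}). The harder direction is $\mathcal{F}_{\#} \circ \mathcal{G} = \id$, where one must check that the $S\#\Gamma$-action built from the recovered graded $S$-set structure matches the original. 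The case $y \in X_\alpha$ (in the recovered sense) is clear from the two formulas. The main obstacle is the complementary case: showing that $(sP_\alpha)y = 0$ in the original action whenever $y \neq 0$ has recovered degree $\beta \neq \alpha$ (which is unique, since $X_\alpha \cap X_\beta = \{0\}$ was established in the proof of Lemma~\ref{gdgdgdgdgss2}). For this I would pick $u \in E(S)$ with $(uP_\beta)y = y$ and invoke associativity to write $(sP_\alpha)y = \bigl((sP_\alpha)(uP_\beta)\bigr)y$; the product $(sP_\alpha)(uP_\beta)$ vanishes by (\ref{smashmult1}), because $u \in S_\varepsilon$ while $\alpha\beta^{-1} \ne \varepsilon$.

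Finally, commutativity of (\ref{inducedshift}) reduces to the identity $\mathcal{F}_{\#}(X(\alpha)) = \mathcal{F}_{\#}(X)(\alpha)$ as $S\#\Gamma$-sets. Both have underlying set $X$, and the two actions coincide: on $\mathcal{F}_{\#}(X(\alpha))$, formula (\ref{dgbcgftgdjsaa}) applied to the shifted components $X(\alpha)_\beta = X_{\beta\alpha}$ yields $(sP_\beta)x = sx$ exactly when $x \in X_{\beta\alpha}$; on $\mathcal{F}_{\#}(X)(\alpha)$, formula (\ref{boundgsfret}) rewrites the action as $(sP_{\beta\alpha})x$, which again equals $sx$ precisely when $x \in X_{\beta\alpha}$ by (\ref{dgbcgftgdjsaa}).
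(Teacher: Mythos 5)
Your proposal is correct and follows essentially the same route as the paper's proof: both functors act as the identity on morphisms, the object-level translations come from Lemmas~\ref{gdgdgdgdgss1} and~\ref{gdgdgdgdgss2}, the key step in checking $\mathcal{F}_{\#}\circ\mathcal{G}=\id$ is the observation that $(sP_\alpha)y=((sP_\alpha)(uP_\beta))y=0$ when $y$ has degree $\beta\neq\alpha$, and the shift square is verified by the same direct comparison of (\ref{dgbcgftgdjsaa}) with (\ref{boundgsfret}). If anything, you are slightly more explicit than the paper in verifying that the recovered components $(X_{\#})_\alpha$ coincide with the original $X_\alpha$ via unitality and local units.
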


\begin{proof}
We begin by defining mappings $\mathcal{F}_{\#}: \Gc{S} \to \Moc{S\#\Gamma}$ and $\mathcal{F}_{\gr} : \Moc{S\#\Gamma} \to \Gc{S}$. For each object $X$ in $\Gc{S}$ let $\mathcal{F}_{\#}(X) = X_{\#}$, where $X_{\#}$ is defined to be $X$, viewed as a left $S \# \Gamma$-set, as in Lemma \ref{gdgdgdgdgss1}. For each morphism $\phi$ in $\Gc{S}$ let $\mathcal{F}_{\#}(\phi) = \phi_{\#}$, where $\phi_{\#} = \phi$. Next, for each object $X$ in $\Moc{S\#\Gamma}$ let $\mathcal{F}_{\gr}(X) = X_{\gr}$, where $X_{\gr}$ is defined to be $X$, viewed as a $\Gamma$-graded left $S$-set, as in Lemma \ref{gdgdgdgdgss2}. For each morphism $\phi$ in $\Moc{S\#\Gamma}$ let $\mathcal{F}_{\gr}(\phi) = \phi_{\gr}$, where $\phi_{\gr} = \phi$. We claim that $\mathcal{F}_{\#}$ and $\mathcal{F}_{\gr}$ are functors. To show this, it suffices to prove that each $\phi_{\#}$ is a morphism in $\Moc{S\#\Gamma}$, and each $\phi_{\gr}$ is a morphism in $\Gc{S}$.

Let $\phi: X \to Y$ be a morphism in $\Gc{S}$, i.e., a function that commutes with the $S$-action and respects the $\Gamma$-grading, and let $sP_\alpha\in S\#\Gamma \setminus \{0\}$ and $x \in X_{\#}$. Then, by (\ref{dgbcgftgdjsaa}) in Lemma \ref{gdgdgdgdgss1}, we have 
\begin{align*}
\phi_{\#}((sP_\alpha)x) 
& = \left\{ \begin{array}{ll}
\phi(sx) & \text{if } x \in X_\alpha    \\
0 & \text{otherwise } 
\end{array}\right. \\
& = \left\{ \begin{array}{ll}
s\phi(x) & \text{if } x \in X_\alpha    \\
0 & \text{otherwise } 
\end{array}\right. 
=(sP_\alpha) \phi_{\#}(x).
\end{align*}
It follows that $\phi_{\#}: X_{\#} \to Y_{\#}$ is a morphism in $\Moc{S\#\Gamma}$.

Next, let $\phi: X \to Y$ be a morphism in $\Moc{S\#\Gamma}$, and let $s \in S\setminus \{0\}$ and $x\in X$. Also, let $\alpha \in \Gamma$ be such that $x \in X_\alpha$, as in Lemma~\ref{gdgdgdgdgss2}. To see that $\phi_{\gr}: X_{\gr} \to Y_{\gr}$ is a morphism in $\Gc{S}$, first notice that $(uP_\alpha) \phi(x) = \phi((uP_\alpha) x)$ for any $u \in E(S) \setminus \{0\}$, and so, by (\ref{fdrefgdhgdaa}), $x \in X_\alpha$ implies that $\phi_{\gr} (x) = \phi(x) \in Y_\alpha$. Thus, by (\ref{mememein}), we have
\[\phi_{\gr}(sx) = \phi((sP_\alpha ) x) = (sP_\alpha)\phi(x) = s\phi_{\gr}(x).\]
We conclude that $\phi_{\gr}: X_{\gr} \to Y_{\gr}$ is a morphism in $\Gc{S}$.

It remains to prove that $\mathcal{F}_{\#}\circ \mathcal{F}_{\gr}$  and $\mathcal{F}_{\gr} \circ \mathcal{F}_{\#}$ are the identity functors. Clearly, $\mathcal{F}_{\gr} \circ \mathcal{F}_{\#}(\phi) = \phi$ for any morphism $\phi$ in $\Gc{S}$, and $\mathcal{F}_{\#}\circ \mathcal{F}_{\gr}(\phi) = \phi$ for any morphism $\phi$ in $\Moc{S\#\Gamma}$. 

Let $X$ be an object in $\Moc{S\#\Gamma}$. Note that for all $s \in S\backslash \{0\}$, $x \in X$, and $\alpha, \beta \in \Gamma$, if $x\in X_{\beta}$ and $\beta \neq \alpha$, then 
\[(sP_\alpha)x=(sP_\alpha)(uP_\beta)x=0x=0,\]
where $u \in S$ is such that $(uP_\beta)x = x$ (see (\ref{fdrefgdhgdaa})). It follows that $(sP_\alpha)x = 0$ unless $x \in X_{\alpha} \setminus \{0\}$. Thus, for all $s \in S\backslash \{0\}$, $\alpha \in \Gamma$, and $x \in X$, applying (\ref{mememein}) and then (\ref{dgbcgftgdjsaa}), it is immediate that $(sP_\alpha)x$, viewed as an element of $X$, agrees with $(sP_\alpha)x$, viewed as an element of $(X_{\gr})_{\#}$. Therefore the $S\#\Gamma$-action on $X$ is the same as that on $(X_{gr})_{\#}$. 

Next let $X$ be an object in $\Gc{S}$, let $s \in S\backslash \{0\}$, and let $x \in X$. Applying (\ref{dgbcgftgdjsaa}) and then (\ref{mememein}), it is apparent that $sx$, viewed as an element of $X$, agrees with $sx$, viewed as an element of $(X_{\#})_{\gr}$. Finally, using (\ref{dgbcgftgdjsaa}) and (\ref{fdrefgdhgdaa}), it is easy to see that $(X_{\#})_\alpha=X_\alpha$, for any $\alpha\in \Gamma$. Thus the $S$-action on $X$ agrees with that on $(X_{\#})_{\gr}$, as does the $\Gamma$-grading. Hence $\mathcal{F}_{\#}\circ \mathcal{F}_{\gr}$ and $\mathcal{F}_{\gr} \circ \mathcal{F}_{\#}$ are the identity functors, and so $\Moc{S}$ is isomorphic to $\Gc{S}$.

Finally, to show that diagram (\ref{inducedshift}) commutes for $\alpha \in \Gamma$, it suffices to check that $\mathcal{T}_\alpha \mathcal{F}_{\#} (X) =\mathcal{F}_{\#}  \mathcal{T}_\alpha (X)$ for all objects $X$ of $\Gc{S}$, i.e., that $(X_{\#})(\alpha)$ and $X(\alpha)_{\#}$ agree as objects of $\Moc{S\#\Gamma}$. Let $sP_\beta \in S\#\Gamma \setminus \{0\}$ and $x\in X$. Viewing $x$ as an element of $(X_{\#})(\alpha)$, by (\ref{boundgsfret}) and (\ref{dgbcgftgdjsaa}), we have
\[(sP_\beta).x = (sP_{\beta\alpha}) x
=\left\{ \begin{array}{ll}
s x & \text{if } x\in X_{\beta\alpha}\\
0 & \text{otherwise } 
\end{array}\right.. \]
On the other hand, viewing $x$ as an element of $X(\alpha)_{\#}$, by (\ref{butyestergdtgf}) , we have 
\[(sP_\beta).x 
=\left\{ \begin{array}{ll}
s x & \text{if } x\in X(\alpha)_\beta = X_{\beta\alpha}\\
0 & \text{otherwise } 
\end{array}\right.. \]
Thus (\ref{inducedshift}) commutes.
\end{proof}

\subsection{Graded Rees matrix semigroups}\label{reesmatrixgroup}
 
Matrix semigroups were introduced by Rees, who proved that all primitive $0$-simple semigroups are of this form. (See~\cite[\S 3.2]{howie} for details, and the Introduction of~\cite{james} for historical background and further motivating examples.) Here we construct graded versions of Rees matrix semigroups, thereby obtaining another class of examples of graded semigroups, which turn out to be related to smash product semigroups. Our construction parallels that of graded matrix rings \cite[\S 1.3]{hazi}. In \S \ref{semigroupringsec} we further use these semigroups to build an interesting class of graded rings.
 
Let $S$ be a $\Gamma$-graded semigroup, and let $I$ and $J$ be nonempty (index) sets. For all $i \in I$ and $j \in J$, fix $\alpha_i, \beta_j \in \Gamma$, and set $\overline{\alpha}=(\alpha_i)_{i\in I} \in \Gamma^I$ and $\overline{\beta}=(\beta_j)_{j\in J}\in \Gamma^J$. For each $\delta \in \Gamma$, denote by $M_\delta:=\M_{I,J}(S)[\overline{\alpha}][\overline{\beta}]_\delta$ the subset of $I \times S \times J$ consisting of all \emph{matrices} $(s_{ij})$, such that $s_{ij} \in S$ and $\deg(s_{ij}) = \alpha_i \delta \beta_j ^{-1}$, for all $i\in I$ and $j \in J$. For all $k\in I$, $l\in J$, and $a \in S$ with $\deg(a) = \alpha_i \delta \beta_j ^{-1}$, we denote by $e_{kl}(a)$ the \emph{elementary matrix} $(s_{ij}) \in M_\delta$, where $s_{kl} = a$, and $s_{ij} = 0$ for $(i,j) \neq (k,l)$. We denote by $0$ the matrix all of whose entires are zero. (So $0 = e_{kl}(0)$ for all $k\in I$, $l\in J$.) Let
\begin{equation}\label{Reesgrade}
\E_{I,J}(S)[\overline{\alpha}][\overline{\beta}]_\delta:= \big\{e_{ij}(a) \mid i\in I, j \in J, a\in S_{\alpha_i \delta \beta_j^{-1}} \big\} \subseteq M_{\delta},
\end{equation}
and 
\begin{equation}\label{Reessemi}
\E_{I,J}(S)[\overline{\alpha}][\overline{\beta}]:=\bigcup_{\delta \in \Gamma} \E_{I,J}(S)[\overline{\alpha}][\overline{\beta}]_\delta.
\end{equation}
To define a multiplication operation on this set, first choose a \emph{sandwich matrix} $p=(p_{ji}) \in \M_{J,I}(S^1)[\overline{\beta}][\overline{\alpha}]_\varepsilon$, where, as usual, $S^1$ denotes the monoid obtained by adjoining an identity element to $S$. Then for all $e_{ij}(a), e_{kl}(b) \in \E_{I,J}(S)[\overline{\alpha}][\overline{\beta}]$, define
\begin{equation}\label{hfbvhsphwq}
e_{ij}(a)e_{kl}(b):=e_{il}(ap_{jk}b).
\end{equation}
It is easy to see that this operation is associative, and so $\E_{I,J}(S)[\overline{\alpha}][\overline{\beta}]$, which we denote by $\E_{I,J}^p(S)[\overline{\alpha}][\overline{\beta}]$ in this context, is a semigroup with respect to it. 

Next, let us check that letting $T_\delta = \E_{I,J}(S)[\overline{\alpha}][\overline{\beta}]_\delta$ for all $\delta \in \Gamma$, we have $T_\gamma T_\lambda \subseteq T_{\gamma \lambda}$ for all $\gamma, \lambda \in \Gamma$. Thus let $e_{ij}(a) \in T_\gamma$ and $e_{kl}(b)\in T_\lambda$. Then $\deg(a) = \alpha_i \gamma\beta_j^{-1}$, $\deg(b) = \alpha_k \lambda\beta_l^{-1}$, and $\deg(p_{jk}) = \beta_j \varepsilon \alpha_k^{-1}$. Therefore 
\[ap_{jk}b \in S_{\alpha_i \gamma\beta_j^{-1}} S_{\beta_j \varepsilon \alpha_k^{-1}} S_{\alpha_k \lambda\beta_l^{-1}}\subseteq S_{\alpha_i \gamma\lambda\beta_l^{-1}}.\] 
It follows from~(\ref{hfbvhsphwq}) that $e_{ij}(a)e_{kl}(b) \in T_{\gamma \lambda}$, and so $T_\gamma T_\lambda \subseteq T_{\gamma \lambda}$. We conclude that $\E_{I,J}^p(S)[\overline{\alpha}][\overline{\beta}]$ is a $\Gamma$-graded semigroup, to which we refer as a \emph{graded Rees matrix semigroup}. Note that if $\Gamma$ is the trivial group, then the above construction reduces to the usual Rees matrix semigroup \cite[\S 3.2]{howie}. 

If $I$ and $J$ are finite, say $I = \{1, \dots, m\}$ and $J = \{1, \dots, n\}$, then taking $[\overline{\alpha}] = (\alpha_1,\dots,\alpha_m)$ and $[\overline{\beta}]=(\beta_1,\dots,\beta_n)$, each component set $M_\delta$, defined above, can be visualised as 
\[\begin{pmatrix}
S_{\alpha_1 \delta \beta_1^{-1}} & S_{\alpha_1 \delta  \beta_2^{-1}} & \cdots &
S_{\alpha_1 \delta \beta_n^{-1}} \\
S_{\alpha_2 \delta \beta_1^{-1}} & S_{\alpha_2 \delta \beta_2^{-1}} & \cdots &
S_{\alpha_2 \delta \beta_n^{-1}} \\
\vdots  & \vdots  & \ddots & \vdots  \\
S_{\alpha_m \delta \beta_1^{-1}} & S_{\alpha_m \delta \beta_2^{-1}} & \cdots &
S_{\alpha_m \delta \beta_n^{-1}}
\end{pmatrix}.\]

\begin{example}\label{gfbchfkhidsbhr}
Let $G$ be a group and $S=G \cup \{0\}$ the corresponding group with zero. Also let $I=J=\{1,2\}$ and let 
\[p=\left(\begin{array}{cc}
\varepsilon & 0 \\
0 & \varepsilon     
\end{array}
\right).\]
Then the corresponding Rees matrix semigroup $R = I \times S \times J$ can be represented as follows.
\[R=\left(\begin{array}{cc}
S & 0 \\
0 & 0     
\end{array}\right)
\bigcup
\left(\begin{array}{cc}
0 & S \\
0 & 0     
\end{array}\right)
\bigcup
\left(\begin{array}{cc}
0 & 0 \\
S & 0     
\end{array}\right)
\bigcup
\left(\begin{array}{cc}
0 & 0 \\
0 & S
\end{array}\right),\]
where multiplication becomes the usual matrix multiplication. By the Rees theorem \cite[Theorem 3.2.3]{howie}, $R$ is regular and completely $0$-simple (i.e., it has no nonzero proper ideals, and possesses a minimal idempotent). 

Now let $S \setminus \{0\} \rightarrow \mathbb Z/ 2\mathbb Z$ be the trivial grading for $S$ (so $S_0 = S$ and $S_1 = \{0\}$). Also, keeping $I$, $J$, and $p$ as before, let $\alpha_1=\beta_1=0$ and $\alpha_2=\beta_2=1$. Then the graded Rees matrix semigroup $T$ has the following graded components (see (\ref{Reesgrade}) and (\ref{Reessemi})):
\[T_0=\left(\begin{array}{cc}
S & 0 \\
0 & 0     
\end{array}\right)
\bigcup
\left(\begin{array}{cc}
0 & 0 \\
0 & S    
\end{array}\right) \, \, \text{ and } \, \, \, 
T_1=\left(\begin{array}{cc}
0 & S \\
0 & 0     
\end{array}\right)
\bigcup
\left(\begin{array}{cc}
0 & 0  \\
S & 0
\end{array}\right).\]
Thus $T=R$ as semigroups, and therefore $T$ is also 0-simple. On the other hand, $T_0$ has two nontrivial two-sided ideals (cf.\ Proposition~\ref{hfbvjkwofhbasks}). 
\end{example} 

Next, we consider a special case of the graded Rees matrix semigroup construction, which will shed additional light on the smash product semigroups discussed above. Let $S$ be a $\Gamma$-graded semigroup, let $I=J=\Gamma$, let $\overline{\alpha} = \overline{\beta}=(\delta)_{\delta \in \Gamma}$, and let $p=(p_{ji}) \in \M_{J,I}(S^1)[\overline{\beta}][\overline{\alpha}]_\varepsilon$ be the identity matrix. (That is, $p_{ji} = 1$ if $j=i$, and $p_{ji} = 0$ otherwise.) In this case we denote the semigroup $\E_{I,J}^p(S)[\overline{\alpha}][\overline{\beta}]$ by $S_\Gamma$. So 
\[S_\Gamma = \big \{e_{\alpha \beta} (s) \mid \alpha, \beta \in \Gamma,  s\in S \big \},\] with multiplication given by 
\begin{equation} \label{stableprod}
e_{\alpha \beta} (s)e_{\delta \gamma} (t) = 
\left\{ \begin{array}{ll}
e_{\alpha\gamma} (st) & \text{if } \beta = \delta \\
0 & \text{otherwise } 
\end{array}\right.,
\end{equation}
(see (\ref{hfbvhsphwq})), and grading given by
\begin{align} \label{ghfgdthfyr}
S_\Gamma \backslash \{0\} & \longrightarrow \Gamma\\
e_{\alpha \beta}(s) & \longmapsto \alpha^{-1} \deg(s) \beta. \notag
\end{align}
We refer to $S_\Gamma$ as a \emph{stable graded Rees matrix semigroup}. The semigroup $T$ constructed in Example~\ref{gfbchfkhidsbhr} is of this sort.
 
\begin{prop}\label{hgyfhyfhf}
Let $S$ be a $\Gamma$-graded semigroup with local units and $S_\Gamma$ the corresponding stable graded Rees matrix semigroup. Then the following hold.
\begin{enumerate}[\upshape(1)]
\item $E(S_\Gamma)= \{e_{\alpha \alpha}(u) \mid u\in E(S), \alpha \in \Gamma\}$. 

\smallskip 

\item $S_\Gamma$ has local units. 

\smallskip 

\item $S_\Gamma$ is strongly graded.

\smallskip 

\item Defining 
\begin{align*}
\phi :S \# \Gamma & \longrightarrow (S_\Gamma)_\varepsilon \\ \notag
sP_\alpha & \longmapsto e_{\deg(s) \alpha, \alpha}(s) \notag
\end{align*}
for all $sP_\alpha \in S \# \Gamma  \setminus \{0\}$, and $\phi(0)=0$, gives an isomorphism between $S \# \Gamma$ and $(S_\Gamma)_\varepsilon$.

\smallskip

\item $S$ is an inverse semigroup if and only if $S_\Gamma$ is an inverse semigroup. 
\end{enumerate}
\end{prop}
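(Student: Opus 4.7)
The plan is to address the five parts in order, leveraging earlier results of the paper where possible, and to treat (5) by routing through the isomorphism established in (4).

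For (1), I would directly examine the squaring of a general element $e_{\alpha\beta}(s)$ using the multiplication rule (\ref{stableprod}): it is nonzero only when $\beta=\alpha$, and then equals $e_{\alpha\alpha}(s^2)$, so $e_{\alpha\beta}(s) \in E(S_\Gamma)$ forces $\alpha = \beta$ and $s \in E(S)$. The converse inclusion is immediate.

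For (2), given a nonzero $e_{\alpha\beta}(s)$, pick local units $u,v \in E(S)$ with $us=s$ and $sv=s$; then by (1) and (\ref{stableprod}), $e_{\alpha\alpha}(u)$ and $e_{\beta\beta}(v)$ are idempotents in $S_\Gamma$ satisfying $e_{\alpha\alpha}(u) \cdot e_{\alpha\beta}(s) = e_{\alpha\beta}(s) = e_{\alpha\beta}(s) \cdot e_{\beta\beta}(v)$. For (3), I would apply Proposition~\ref{hgfgftgr} (implication (3)$\Rightarrow$(1)) and show that $(S_\Gamma)_\delta (S_\Gamma)_{\delta^{-1}}$ contains every local unit. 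Using the grading formula (\ref{ghfgdthfyr}) and the fact that $\deg(u)=\varepsilon$ for $u \in E(S)$, the identity
\[
e_{\alpha\alpha}(u) = e_{\alpha,\alpha\delta}(u)\cdot e_{\alpha\delta,\alpha}(u),
\]
together with the degree check $\alpha^{-1}\deg(u)(\alpha\delta) = \delta$ and $(\alpha\delta)^{-1}\deg(u)\alpha = \delta^{-1}$, will do it.

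For (4), I would first verify well-definedness: by (\ref{ghfgdthfyr}), $e_{\deg(s)\alpha,\alpha}(s)$ has degree $(\deg(s)\alpha)^{-1}\deg(s)\alpha = \varepsilon$. The homomorphism property is a careful bookkeeping exercise matching (\ref{smashmult1}) against (\ref{stableprod}): $\phi(sP_\alpha)\phi(tP_\beta)$ is nonzero precisely when $\alpha = \deg(t)\beta$, i.e., $t \in S_{\alpha\beta^{-1}}$, in which case both products yield $e_{\deg(st)\beta,\beta}(st) = \phi(stP_\beta)$. Injectivity is forced by matching the row/column indices and the entry, and surjectivity holds because any $e_{\gamma,\alpha}(s) \in (S_\Gamma)_\varepsilon$ satisfies $\gamma = \deg(s)\alpha$, hence equals $\phi(sP_\alpha)$.

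For (5), rather than argue directly, I would combine the pieces already in hand. By (2) and (3), $S_\Gamma$ is strongly $\Gamma$-graded and has local units, so Proposition~\ref{bvgfjsirdsw}(2) gives $S_\Gamma$ inverse if and only if $(S_\Gamma)_\varepsilon$ is inverse. By (4), $(S_\Gamma)_\varepsilon \cong S\#\Gamma$, and Lemma~\ref{hgyfhyfhf1}(4) then gives $S\#\Gamma$ inverse if and only if $S$ is inverse. The main obstacle is the one in (4): keeping the two distinct index conventions (degree-shifted row index in $S_\Gamma$ versus the formal labels $P_\alpha$ in $S\#\Gamma$) aligned in both directions of the homomorphism check, since a single miscalculation of the shift $\deg(s)\alpha$ would break both well-definedness and the matching of products.
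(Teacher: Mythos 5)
Your proposal is correct and follows essentially the same route as the paper: direct computations with (\ref{stableprod}) and (\ref{ghfgdthfyr}) for (1)--(4), and for (5) the same chain through Lemma~\ref{hgyfhyfhf1}(4), part (4), and Proposition~\ref{bvgfjsirdsw}(2). The only cosmetic difference is in (3), where you verify the factorisation $e_{\alpha\alpha}(u)=e_{\alpha,\alpha\delta}(u)e_{\alpha\delta,\alpha}(u)$ just for local units and invoke Proposition~\ref{hgfgftgr}(3)$\Rightarrow$(1), while the paper factors an arbitrary element of $(S_\Gamma)_\varepsilon$; both are valid.
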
 

\begin{proof}
(1) This follows easily from the above description (\ref{stableprod}) of multiplication in $S_\Gamma$.

(2) Let $e_{\alpha \beta}(s)\in S_\Gamma$, and let $u,v \in E(S)$ be such that $us=sv=s$. Then $e_{\alpha \alpha}(u), e_{\beta \beta}(v) \in E(S_\Gamma)$, by (1), and \[e_{\alpha \alpha}(u) e_{\alpha \beta}(s)= e_{\alpha \beta}(s) =e_{\alpha \beta}(s) e_{\beta \beta}(v),\] by (\ref{stableprod}). Hence $S_\Gamma$ has local units.

(3) By Proposition~\ref{hgfgftgr}, it suffices to show that $({S_\Gamma})_\varepsilon \subseteq ({S_\Gamma})_\gamma ({S_\Gamma})_{\gamma^{-1}}$, for all $\gamma \in \Gamma$. Thus let $e_{\alpha \beta}(s) \in ({S_\Gamma})_\varepsilon \setminus \{0\}$, let $\gamma \in \Gamma$, and let $v \in E(S)$ be such that $sv=s$. Then we have $e_{\alpha \beta}(s)=e_{\alpha, \beta\gamma }(s)e_{\beta\gamma, \beta}(v)$. Also, by (\ref{ghfgdthfyr}), $\deg(s) = \alpha\beta^{-1}$, and 
\[\deg(e_{\alpha, \beta\gamma }(s)) = \alpha^{-1}\deg(s)\beta\gamma = \alpha^{-1}\alpha\beta^{-1}\beta\gamma = \gamma.\] 
Similarly $\deg(e_{\beta\gamma, \beta}(v)) = \gamma^{-1}$, and so $e_{\alpha \beta}(s) \in ({S_\Gamma})_\gamma ({S_\Gamma})_{\gamma^{-1}}$. Thus, $({S_\Gamma})_\varepsilon \subseteq ({S_\Gamma})_\gamma ({S_\Gamma})_{\gamma^{-1}}$.

(4) For all $sP_\alpha, tP_\beta \in S \# \Gamma \setminus \{0\}$, we have
\begin{align*}
\phi(sP_\alpha)\phi (tP_\beta) 
& = e_{\deg(s) \alpha, \alpha}(s)e_{\deg(t) \beta, \beta}(t) \\
& = \left\{ \begin{array}{ll}
e_{\deg(st) \beta, \beta}(st) & \text{if } \deg(t)=\alpha\beta^{-1} \text{ and } st \neq 0  \\
0 & \text{otherwise } 
\end{array}\right. \\ 
& = \left\{ \begin{array}{ll}
\phi(stP_\beta) & \text{if } t \in S_{\alpha\beta^{-1}} \text{ and } st \neq 0  \\
0 & \text{otherwise } 
\end{array}\right.  \\
& = \phi((sP_\alpha)(tP_\beta)),
\end{align*}
and so $\phi$ is a homomorphism, which is clearly surjective. Also, if $e_{\deg(s) \alpha, \alpha}(s) = e_{\deg(t) \beta, \beta}(t)$ for some $s, t \in S \setminus \{0\}$ and $\alpha, \beta \in \Gamma$, then necessarily $s=t$ and $\alpha = \beta$, from which it follows that $\phi$ is injective.

(5) By Lemma~\ref{hgyfhyfhf1}(4), $S$ being an inverse semigroup is equivalent to $S \# \Gamma$ being such, which is, in turn, equivalent to $(S_\Gamma)_\varepsilon$ being an inverse semigroup, by (4). The desired conclusion now follows from Proposition~\ref{bvgfjsirdsw}(2).
\end{proof}

Proposition~\ref{hgyfhyfhf}(4) is analogous to a theorem about the smash product for rings. Specifically, if $\Gamma$ is a finite group, and $A$ is a $\Gamma$-graded ring, then $A\# \Gamma$ is isomorphic to a graded matrix ring \cite[7.2.1(2) Theorem]{nats1}.

\section{Graded Morita theory} \label{moritasection}
 
Morita theory for semigroups with local units was first explored in the 1990s by Talwar \cite{talwar1}. Many papers on the subject have appeared since then, culminating in the work of Lawson~\cite{lawsonmorita}, on semigroups with local units, and Funk/Lawson/Steinberg~\cite{funk}, on inverse semigroups. 

To obtain a Morita theory having a flavour similar to that in ring theory, Talwar worked with closed sets. Specifically, given a semigroup $S$ with local units, a left $S$-set $X$ is called \emph{closed} (or \emph{fixed}, or \emph{firm}, or \emph{ferm\'e}) if the $S$-map $S\otimes_S X\rightarrow X$, defined by $s\otimes x\mapsto sx$, is bijective (see \S \ref{grfdrferfsa}). Note that a closed left $S$-set is necessarily unital. The subcategory of $\Moc{S}$ consisting of closed left $S$-sets (or $S$-acts) is denoted by $\FAct{S}$ (where ``F" stands for ``fixed"). Talwar~\cite{talwar1} proved that for semigroups $S$ and $T$ with local units, $\FAct{S}$ is equivalent to $\FAct{T}$ if and only if there is a 6-tuple Morita context between $S$ and $T$. Other interesting statements equivalent to this one can be found in~\cite[Theorem~1.1]{lawsonmorita}. 

Graded Morita theory for rings was first studied by Gordon and Green~\cite{greengordon} in the setting of $\mathbb Z$-graded rings. For $\Gamma$-graded rings, with $\Gamma$ arbitrary, it was studied in \cite[\S 2]{hazi}. There it is shown that for unital $\Gamma$-graded rings $A$ and $B$, an equivalence of the categories of graded modules $\Gc{A}$ and $\Gc{B}$, which respects the shift functors, gives a 6-tuple Morita context between the rings, and, consequently, gives an equivalence of the categories of modules $\Moc{A}$ and $\Moc{B}$. This lifting of equivalence from the subcategories of graded modules to the categories of modules plays a crucial role in classifications of Leavitt path algebras~\cite{hazi}. 

Our next goal is to build a graded Morita theory for semigroups, in an analogous fashion. For a $\Gamma$-graded semigroup $S$, denote by $\FGrAct{S}$ the subcategory of $\Gc{S}$ consisting of closed $\Gamma$-graded left $S$-sets. Then $\FGrAct{S}$, as a subcategory of $\Gc{S}$, admits shift functors $\mathcal{T}_\alpha$ as in~(\ref{hfbvhjfkushke}). 

\begin{deff}\label{dhhchdhdfh}
Let $S$ and $T$ be $\Gamma$-graded semigroups.
\begin{enumerate}

\item A functor $\mathcal{F} :\Gc{S} \rightarrow \Gc{T}$ (or between their subcategories) is called a \emph{graded} functor if $\mathcal{F} \mathcal{T}_{\alpha} = \mathcal{T}_{\alpha} \mathcal{F}$, for any $\alpha \in \Gamma$. 

\smallskip

\item A graded functor $\mathcal{F} :\Gc{S} \rightarrow \Gc{T}$ (or between their subcategories)  is called a \emph{graded equivalence} if there exists a graded functor $\mathcal{F}' :\Gc{T} \rightarrow \Gc{S}$ such that $\mathcal{F}'\mathcal{F} \cong 1_{\Gc{S}}$ and $\mathcal{F} \mathcal{F}' \cong 1_{\Gc{T}}$. 

\smallskip

\item If there is a graded equivalence between $\FGrAct{S}$ and $\FGrAct{T}$, we say that $S$ and $T$ are \emph{graded Morita equivalent}. 
\end{enumerate}
\end{deff}

It is also possible to define \emph{graded Morita contexts} between graded semigroups, and use them to develop a theory for graded semigroups in a manner analogous to that in~\cite{talwar1} and~\cite{lawsonmorita}, but we do not pursue that line of inquiry here. Instead, in Theorem~\ref{hgbvgfhhf} we show that, analogously to the case of graded rings, if two $\Gamma$-graded semigroups $S$ and $T$ are graded Morita equivalent, then the equivalence can be lifted to the categories of closed left sets, implying that $S$ and $T$ are Morita equivalent. (See diagram below, where $\mathcal{U}$ denotes the forgetful functor.)
\begin{equation*}
\xymatrix{
\FAct{S} \ar@{.>}[r]^{?} & \FAct{T}\\
\FGrAct{S} \ar[r]^{\mathcal{F}} \ar[u]^{\mathcal{U}} & \FGrAct{T} \ar[u]_{\mathcal{U}}}
\end{equation*}
\smallskip
Note that this is not a priori obvious, since $\FAct{S}$ is ``bigger" than $\FGrAct{S}$.

Given a semigroup $S$, we denote by $\mathcal{C}(S)$  the \emph{Cauchy completion category of $S$}, whose objects are the idempotents of $S$, and whose morphisms are triples $(e,s,f) \in E(S) \times S \times E(S)$ such that $esf=s$. Here morphisms are composed using the rule $(e,s,f)(f,t,g) = (e,st,g)$. Lawson~\cite[Theorem 3.4]{lawsonmorita} showed that two semigroups with local units, $S$ and $T$, are Morita equivalent if and only if the corresponding Cauchy completion categories, $\mathcal{C}(S)$ and $\mathcal{C}(T)$, are equivalent. We will use this theorem to relate graded categories to non-graded ones in Theorem~\ref{hgbvgfhhf}. 

It should be noted that in~\cite{lawsonmorita} semigroups $S$ and left $S$-sets are not assumed to have zero elements. Since zero elements can be adjoined to any such semigroups and $S$-set, the results and proofs in~\cite{lawsonmorita} readily transfer to our setting, with one adjustment. While in the category of left $S$-sets with no zeros the coproduct of a collection of objects is their disjoint union, in our categories the coproduct is the $0$-disjoint union (since the disjoint union does not have a universal zero element), which we recall next. This observation is used liberally by Talwar in the original paper~\cite{talwar1} on Morita theory for semigroups.

Given a semigroup $S$ and a collection $\{X_i \mid i \in I\}$ of left $S$-sets, we denote by $\bigsqcup_{i \in I} X_i$ the \emph{$0$-disjoint union} of the relevant sets. That is, $\bigsqcup_{i \in I} X_i$ is the disjoint union of the sets $X_i \setminus \{0\}$, together with single zero element. It is immediate that $\bigsqcup_{i \in I} X_i$ is a left $S$-set whenever the $X_i$ are, upon identifying the zero of each $X_i$ with the common zero. If $S$ and the $X_i$ are $\Gamma$-graded, then $\bigsqcup_{i \in I} X_i$ inherits the grading from the $X_i$.

For a semigroup $S$, an object $X$ of $\FAct{S}$, respectively $\FGrAct{S}$, is said to be \emph{indecomposable} if $X$ is not isomorphic to $Y \sqcup Z$ (the $0$-disjoint union of $Y$ and $Z$), for any nonzero objects $Y$ and $Z$ of $\FAct{S}$, respectively $\FGrAct{S}$. Recall also that an object $X$ in a category is \emph{projective} if for every epimorphism $\phi: Y \to Z$ and morphism $\psi: X \to Z$ in the category, there is a morphism $\theta: X \to Y$ such that $\phi \theta = \psi$. To prove Theorem~\ref{hgbvgfhhf}, we require a description of the projective indecomposable objects in $\FGrAct{S}$. 

\begin{lemma} \label{projlemm}
Let $S$ be a $\Gamma$-graded semigroup with local units, and let $X$ be a closed $\Gamma$-graded left $S$-set. Then the following hold.
\begin{enumerate}[\upshape(1)]
\item $X$ is projective in $\FGrAct{S}$ if and only if it is projective as an object of $\FAct{S}$. 

\smallskip 

\item $X$ is indecomposable in $\FGrAct{S}$ if and only if it is indecomposable as an object of $\FAct{S}$. 
\end{enumerate}
\end{lemma}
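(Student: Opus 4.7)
The approach is to build a single closed $\Gamma$-graded left $S$-set $P$ that covers $X$ via a graded surjection and is projective simultaneously in $\FGrAct{S}$ and $\FAct{S}$, then to exploit a special feature of pointed graded $S$-sets: sections of graded surjections are automatically graded.

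The first step is to verify the basic homological facts about ``cyclic'' covers. For every $e \in E(S)$ and $\alpha \in \Gamma$, the shifted closed graded left $S$-set $Se(\alpha)$, with $(Se)_\beta := S_\beta e$, is projective in $\FGrAct{S}$; its underlying $S$-set $Se$ is projective in $\FAct{S}$. Both assertions follow from the natural evaluation-at-$e$ bijection $\mathrm{Hom}(Se,Y) \cong eY$ (and its graded analogue $\mathrm{Hom}_{\mathrm{gr}}(Se(\alpha),Y) \cong eY_{\alpha^{-1}}$), which sends (graded) epimorphisms to surjections. Since $X$ is closed and hence unital, every nonzero $x \in X$ admits a local unit $e_x \in E(S)$ with $e_x x = x$. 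Setting $\alpha_x := \deg(x)$, I form
\[
P \ := \ \bigsqcup_{x \in X \setminus \{0\}} Se_x(\alpha_x^{-1}),
\]
with $\pi : P \to X$ defined on the $x$-summand by $s \mapsto sx$. A direct check shows that $\pi$ is a graded surjection, and since $0$-disjoint unions preserve both closedness and projectivity, $P$ is projective in both $\FGrAct{S}$ and $\FAct{S}$.

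The decisive observation is that any $S$-set section $\sigma : X \to P$ of $\pi$ is automatically graded. Indeed, for nonzero $x \in X_\alpha$, $\sigma(x)$ must be nonzero (otherwise $x = \pi(\sigma(x)) = 0$), so $\sigma(x) \in P_\beta$ for a unique $\beta$; then $x = \pi(\sigma(x)) \in X_\beta$, and since graded components of $X$ intersect pairwise in $\{0\}$, necessarily $\beta = \alpha$. This phenomenon is special to pointed graded sets (it fails for graded modules, where a section may land in several components) and is the main technical insight of the proof. With this in hand, part (1) collapses: if $X$ is projective in $\FAct{S}$, then the $\FAct{S}$-epi $\pi$ has an $S$-set section, which by the observation is graded, exhibiting $X$ as an $\FGrAct{S}$-retract of the graded projective $P$; conversely, if $X$ is projective in $\FGrAct{S}$, any graded section of $\pi$ is already an $S$-set section, so $X$ is an $\FAct{S}$-retract of $P$.

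For part (2), one direction is trivial: a decomposition $X \cong Y \sqcup Z$ in $\FGrAct{S}$ is, after forgetting the grading, a decomposition in $\FAct{S}$. For the converse, given such a decomposition in $\FAct{S}$, I would observe that $Y$ and $Z$ are sub-$S$-sets of $X$, so restricting the degree function of $X$ equips each with a $\Gamma$-grading satisfying $S_\alpha Y_\beta \subseteq Y_{\alpha\beta}$; together with the standing closedness of $Y,Z$, this upgrades the decomposition to $\FGrAct{S}$. The only substantive step in the proof is the automatic-grading observation at the heart of part (1); everything else is bookkeeping.
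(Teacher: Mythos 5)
Your proof is correct, and part (2) together with the ``only if'' direction of part (1) follow essentially the paper's own argument: the paper uses the very same cover $\bigsqcup_{x} Se_x(\alpha_x^{-1}) \to X$, a graded section of it, and the fact that a retract of a projective object of $\FAct{S}$ is projective. Where you genuinely diverge is the ``if'' direction of (1). The paper takes an arbitrary graded epimorphism $\phi : Y \to Z$ and graded map $\psi : X \to Z$, lifts $\psi$ through $\phi$ ungradedly to some $\theta'$, and then repairs $\theta'$ into a graded map by redefining it to be $0$ wherever $\psi$ vanishes. You instead treat this direction by the same covering-object device as the other one: you show $P = \bigsqcup_{x} Se_x(\alpha_x^{-1})$ is projective in $\FGrAct{S}$ (via the graded hom bijection $\mathrm{Hom}_{\mathrm{gr}}(Se(\alpha),Y) \cong eY_{\alpha^{-1}}$ and closure of projectives under coproducts), and you observe that any $S$-set section of the graded surjection $\pi : P \to X$ is automatically graded, because each nonzero element of a graded $S$-set lies in exactly one component and $\pi$ preserves degrees; the retract argument then finishes. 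This makes the two directions symmetric and avoids the delicate point in the paper's repair step (one must still check that the modified lift $\theta$ is an $S$-map, which requires an argument when $\psi(p)\neq 0$ but $\psi(sp)=0$); the price is the extra input that $Se(\alpha)$ is graded-projective and that retracts of projectives are projective, both of which you justify adequately. Your key observation --- that preimages of nonzero homogeneous elements under graded maps of pointed graded $S$-sets are forced into the correct component --- is exactly the phenomenon the paper also exploits, just packaged differently.
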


\begin{proof}
(1) Suppose that $X$ is projective, viewed as an object of $\FAct{S}$, by forgetting the grading. Let $Y$ and $Z$ be objects in $\FGrAct{S}$, and let $\psi : X \to Z$ and $\phi : Y \to Z$ be morphisms (i.e., graded $S$-maps) in $\FGrAct{S}$, such that $\phi$ is surjective. We wish to find a morphism $\theta : X \to Y$ such that the following diagram commutes.
\begin{equation*}
\xymatrix{
& X \ar[d]^{\psi} \ar@{.>}[dl]_{\theta} \\
Y \ar[r]^\phi  & Z}
\end{equation*}
Since $X$ is projective in $\FAct{S}$, there is an $S$-map $\theta' : X \to Y$ such that $\phi \theta'=\psi$. Define a map $\theta:X \rightarrow Y$ by 
\[\theta(p) =
\left\{ \begin{array}{ll}
\theta'(p) & \text{if } \psi(p) \neq 0\\
0 & \text{otherwise } 
\end{array}\right..
\] 
Then clearly $\phi \theta=\psi$. We claim that $\theta$ is a graded map.

Let $\alpha \in \Gamma$, and let $p \in X_\alpha$ be such that $\psi(p) \neq 0$. Since $\psi$ is graded, we have $\psi(p) \in Z_\alpha \setminus \{0\}$. Since $\phi$ is graded, it could not be the case that $\deg(\theta(p)) = \beta$ for some $\beta \in \Gamma \setminus \{\alpha\}$, since otherwise we would have $\phi \theta(p) = \phi \theta(p)' \in Z_\beta \setminus \{0\}$, contradicting $\psi(p) \in Z_\alpha\setminus \{0\}$. Thus $\theta(p) \in Y_\alpha$, from which it follows that $\theta$ is graded, and hence is a morphism in $\FGrAct{S}$. Thus $X$ is projective in $\FGrAct{S}$. 

Conversely, suppose that $X$ is projective in $\FGrAct{S}$. Since $X$ is closed and $S$ has local units, for each $\alpha \in \Gamma$ and each $x\in X_\alpha \setminus \{0\}$ we can choose an idempotent $e_x \in E(S)$ such that $e_x x=x$. Define a function 
\begin{align*}
\phi: \bigsqcup_{\substack{x\in X_\alpha \setminus \{0\}\\ \alpha \in \Gamma}}Se_x(\alpha^{-1}) & \longrightarrow X\\
se_x\longmapsto sx.
\end{align*} 
(See~(\ref{butyestergdtgf}) for the notation.) Then $\phi$ is clearly a surjective $S$-map. Moreover, for all $s \in S$, $\alpha \in \Gamma$, and $x\in X_\alpha$ such that $sx \neq 0$, we have $\deg(se_x) = \deg(s)\alpha = \deg(sx)$, and so $\phi$ is a graded map. Since $X$ is projective in $\FGrAct{S}$, there is a graded $S$-map 
\begin{align*}
\psi: X & \longrightarrow \bigsqcup_{\substack{x\in X_\alpha \setminus \{0\}\\ \alpha \in \Gamma}}Se_x(\alpha^{-1})
\end{align*} 
such that $\phi\psi=1_X$. Now, by \cite[Lemma 3.1 (2)]{lawsonmorita} and \cite[Lemma 3.2 (1)]{lawsonmorita}, the $S$-set $\bigsqcup_{x,\alpha} Se_x(\alpha^{-1})$ is projective, viewed as an object of $\FAct{S}$. Hence, by \cite[Lemma 3.2 (3)]{lawsonmorita}, $X$ is also projective in $\FAct{S}$, upon viewing $\psi$ and $\phi$ as morphisms in $\FAct{S}$.

(2) Suppose that $X$ is indecomposable as an object of $\FAct{S}$. Then it could not be the case that $X = Y \sqcup Z$ for some nonzero objects $Y$ and $Z$ in $\FGrAct{S}$, since viewing $Y$ and $Z$ as objects of $\FAct{S}$, would give a decomposition of $X$ in $\FAct{S}$. Thus $X$ is indecomposable in $\FGrAct{S}$.

Conversely, suppose that $X$ is indecomposable in $\FGrAct{S}$, and that $X = Y \sqcup Z$ for some nonzero objects $Y$ and $Z$ in $\FAct{S}$. As closed left $S$-subsets of $X$, the sets $Y$ and $Z$ inherit a $\Gamma$-grading from $X$, and hence can be viewed as objects of $\FGrAct{S}$. This contradicts $X$ being indecomposable in $\FGrAct{S}$, and so it must also be indecomposable in $\FAct{S}$.
\end{proof}

\begin{lemma}\label{bjokrkd}
Let $S$ be a $\Gamma$-graded semigroup with local units. Then an object of $\FGrAct{S}$ is projective and indecomposable if and only if it is isomorphic to $Se(\alpha)$ for some $e \in E(S)$ and $\alpha \in \Gamma$.
\end{lemma}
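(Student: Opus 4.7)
The idea is to handle both implications by reducing to the non-graded setting via Lemma~\ref{projlemm}, and then, for the converse, to recover the appropriate shift $\alpha$ from the degree of the image of $e$ under a chosen (non-graded) isomorphism.

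For the backward implication, fix $e \in E(S)$ and $\alpha \in \Gamma$. Since $Se(\alpha)$ has the same underlying $S$-set as $Se$, Lemma~\ref{projlemm} reduces projectivity and indecomposability of $Se(\alpha)$ in $\FGrAct{S}$ to the corresponding statements for $Se$ in $\FAct{S}$. Projectivity of $Se$ in $\FAct{S}$ is in~\cite[Lemma 3.2]{lawsonmorita} (or equivalently, the functor $\mathrm{Hom}_{\FAct{S}}(Se, -)$ is naturally isomorphic to $e(-)$, which is exact). For indecomposability, suppose $Se = Y \sqcup Z$ in $\FAct{S}$ with $Y, Z$ nonzero; since $e = e \cdot e \in Se$, the element $e$ belongs to exactly one of $Y, Z$, say $Y$, and then $Se = S^1 e \subseteq Y$ forces $Z = \{0\}$, a contradiction.

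For the forward implication, let $X$ be projective and indecomposable in $\FGrAct{S}$. By Lemma~\ref{projlemm}, $X$ is projective and indecomposable as an object of $\FAct{S}$, so by the standard classification of projective indecomposable closed left $S$-sets (a consequence of the Cauchy completion description of $\FAct{S}$ underlying~\cite[Theorem 3.4]{lawsonmorita}), there exist $e \in E(S)$ and a (non-graded) $S$-set isomorphism $\theta \colon Se \to X$. Set $x_0 := \theta(e) \in X \setminus \{0\}$. Since $X \setminus \{0\}$ is the disjoint union of the sets $X_\alpha \setminus \{0\}$ over $\alpha \in \Gamma$, the element $x_0$ lies in a unique graded component, say $x_0 \in X_{\alpha_0}$. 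We claim that the same function $\theta$, viewed as a map $Se(\alpha_0) \to X$, is graded. Indeed, an element $se$ lies in $Se(\alpha_0)_\beta = (Se)_{\beta \alpha_0^{-1}}$ iff $\deg(s) = \beta \alpha_0^{-1}$, and in that case $\theta(se) = s x_0$ has degree $\deg(s) \deg(x_0) = \beta \alpha_0^{-1} \alpha_0 = \beta$ whenever $sx_0 \neq 0$. Hence $\theta \colon Se(\alpha_0) \to X$ is an isomorphism in $\FGrAct{S}$.

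The main obstacle is to secure the non-graded classification of projective indecomposable closed left $S$-sets (failing a direct citation, one can argue that a projective $X$ is a graded retract of the cover $\bigsqcup_{x,\alpha} S e_x(\alpha^{-1})$ constructed in the proof of Lemma~\ref{projlemm}, and then use indecomposability to show the retraction factors through a single summand). Once this is in hand, the graded refinement is essentially formal: a single nonzero element of $X$ is automatically homogeneous, so its degree $\alpha_0 = \deg(\theta(e))$ alone records the necessary twist, and shifting $Se$ by $\alpha_0$ absorbs the failure of $\theta$ to be graded.
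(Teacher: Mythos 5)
Your proposal is correct and follows essentially the same route as the paper: reduce to the non-graded classification of projective indecomposables in $\FAct{S}$ (Lawson's Proposition 3.3) via Lemma~\ref{projlemm}, and then observe that the grading on an object whose underlying $S$-set is $Se$ is determined by the degree of (the image of) $e$, so a suitable shift makes the non-graded isomorphism graded; your explicit upgrade of $\theta$ to a graded map is exactly the step the paper compresses into one sentence. The only discrepancy is notational: with the paper's convention $X(\alpha)_\beta := X_{\beta\alpha}$ the correct twist is $Se(\alpha_0^{-1})$ rather than $Se(\alpha_0)$, but since the lemma only asserts existence of some $\alpha$, this is immaterial.
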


\begin{proof}
First, note that each $Se(\alpha)$ in $\FGrAct{S}$ reduces to $Se$, when viewed as an object of $\FAct{S}$. Moreover, for any object of $\FGrAct{S}$ having $Se$ as the underlying left $S$-set, the grading is completely determined by the degree of $e$. Hence the objects of $\FGrAct{S}$ that reduce to ones of the form $Se$, when viewed as objects of $\FAct{S}$, are precisely those of the form $Se(\alpha)$.

Now, according to~\cite[Proposition 3.3]{lawsonmorita}, the projective indecomposable objects of $\FAct{S}$ are exactly the objects that are isomorphic to $Se$ for some $e \in E(S)$. The desired conclusion now follows from Lemma~\ref{projlemm}.
\end{proof}

\begin{thm}\label{hgbvgfhhf}
Let $S$ and $T$ be $\Gamma$-graded semigroups with local units. If $S$ and $T$ are graded Morita equivalent, then they are Morita equivalent. 
\end{thm}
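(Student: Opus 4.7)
The plan is to apply Lawson's characterisation of Morita equivalence \cite[Theorem 3.4]{lawsonmorita}: it suffices to produce an ordinary equivalence $\mathcal{C}(S) \simeq \mathcal{C}(T)$ of Cauchy completion categories from the given graded equivalence $\mathcal{F}: \FGrAct{S} \to \FGrAct{T}$. The key input is Lemma~\ref{bjokrkd}: the projective indecomposable objects of $\FGrAct{S}$ are exactly the shifts $Se(\alpha)$ with $e \in E(S)$ and $\alpha \in \Gamma$, and similarly for $T$. Since any equivalence preserves projectivity and indecomposability, for each $e \in E(S)$ I can choose $f_e \in E(T)$ and $\beta_e \in \Gamma$ with $\mathcal{F}(Se) \cong Tf_e(\beta_e)$, and the graded hypothesis $\mathcal{F}\mathcal{T}_\alpha = \mathcal{T}_\alpha \mathcal{F}$ then upgrades this to $\mathcal{F}(Se(\alpha)) \cong Tf_e(\beta_e \alpha)$ for every $\alpha \in \Gamma$.

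Next I would translate this object-level correspondence into a bijection on morphisms. Evaluating graded $S$-maps at the generating idempotent yields a natural identification
\[ \mathrm{Hom}_{\FGrAct{S}}(Se(\alpha), Se'(\alpha')) \;\cong\; (eSe')_{\alpha^{-1}\alpha'}, \]
so the $0$-disjoint union over $\alpha' \in \Gamma$ reconstructs the entire Cauchy morphism set $eSe' = \mathrm{Hom}_{\mathcal{C}(S)}(e', e)$. Since $\mathcal{F}$ is bijective on hom-sets and commutes with shifts, transporting across $\mathcal{F}$ produces, for each pair $e, e' \in E(S)$, a bijection $eSe' \leftrightarrow f_e T f_{e'}$. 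Setting $\hat{\mathcal{F}}(e) := f_e$ on objects, and letting $\hat{\mathcal{F}}$ act on morphisms via this bijection, defines a candidate functor $\hat{\mathcal{F}}: \mathcal{C}(S) \to \mathcal{C}(T)$; functoriality is inherited from the functoriality of $\mathcal{F}$ on composites of graded maps between shifted projective indecomposables.

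Finally, I would verify that $\hat{\mathcal{F}}$ is an equivalence. Fullness and faithfulness follow immediately from the hom-set bijection. Essential surjectivity holds because any $f \in E(T)$ gives a projective indecomposable $Tf \in \FGrAct{T}$, which by essential surjectivity of $\mathcal{F}$ is isomorphic to $\mathcal{F}(Se(\alpha)) \cong Tf_e(\beta_e \alpha)$ for some $e, \alpha$, making $f$ and $f_e$ isomorphic in $\mathcal{C}(T)$. Lawson's theorem then delivers Morita equivalence of $S$ and $T$. The main obstacle I anticipate is the bookkeeping of the shift parameters $\beta_e$ alongside the degrees of morphisms, to ensure that the bijection $eSe' \leftrightarrow f_e T f_{e'}$ is compatible with the Cauchy composition rule $(e, s, f)(f, t, g) = (e, st, g)$; once the hom-set identification displayed above is shown to be natural in both variables and compatible with composition of graded $S$-maps, the rest is forced by functoriality of $\mathcal{F}$.
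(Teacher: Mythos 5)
Your proposal is correct and follows essentially the same route as the paper: both reduce to Lawson's Cauchy-completion criterion by restricting the graded equivalence to the projective indecomposables $Se(\alpha)$ of Lemma~\ref{bjokrkd}, transporting a Cauchy morphism $a \in eSe'$ as a graded map between suitably shifted copies (the paper fixes the source shift at $\varepsilon$ and takes the target shift to be $\deg(a)$), and using shift-equivariance of $\mathcal{F}$ to make the resulting assignment independent of the shift. The only point to watch in your bookkeeping is that $\mathcal{T}_\alpha(X(\beta)) = X(\alpha\beta)$, so for nonabelian $\Gamma$ your claim should read $\mathcal{F}(Se(\alpha)) \cong Tf_e(\alpha\beta_e)$ rather than $Tf_e(\beta_e\alpha)$.
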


\begin{proof}
Let $\mathcal{F} : \FGrAct{S} \rightarrow  \FGrAct{T}$ be a graded equivalence of categories. Also, let $\mathcal{EP}^{\gr}_S$, respectively $\mathcal{EP}^{\gr}_T$, denote the full subcategory of $\FGrAct{S}$, respectively $\FGrAct{T}$, consisting of indecomposable projective objects and morphisms between them. Since $\mathcal{F}$ preserves coproducts and projective objects, $\mathcal{F}$ induces a graded equivalence $\mathcal{F}: \mathcal{EP}^{\gr}_S \rightarrow \mathcal{EP}^{\gr}_T$. Finally, let $\mathcal{EP}_S$, respectively $\mathcal{EP}_T$, denote the full subcategory of $\FAct{S}$, respectively $\FAct{T}$, consisting of indecomposable projective objects and morphisms between them.

We will define a functor $\mathcal{H}: \mathcal{EP}_S \rightarrow \mathcal{EP}_T$, and show it to be faithful, full, and dense, implying that $\mathcal{EP}_S$ and $\mathcal{EP}_T$ are equivalent (see~\cite[Lemma 7.9.6]{bergman}). In the (short) proof of \cite[Theorem 3.4]{lawsonmorita} it is shown that $\mathcal{EP}_S$ is equivalent to the Cauchy completion $\mathcal{C}(S)$, for any semigroup $S$. Hence $\mathcal{EP}_S$ and $\mathcal{EP}_T$ being equivalent implies that so are $\mathcal{C}(S)$ and $\mathcal{C}(T)$. According to~\cite[Theorem 1.1]{lawsonmorita} this, in turn, implies that $S$ and $T$ are Morita equivalent. 

To define $\mathcal{H}$ we first need some additional information about $\mathcal{EP}_S$, $\mathcal{EP}_T$, $\mathcal{EP}^{\gr}_S$, and $\mathcal{EP}^{\gr}_T$. By~\cite[Proposition 3.3]{lawsonmorita}, the objects of $\mathcal{EP}_S$ are of the form $Se$, where $e \in E(S)$, and analogously for $\mathcal{EP}_T$. Likewise, by Lemma~\ref{bjokrkd}, the objects of $\mathcal{EP}^{\gr}_S$ are of the form $Se(\alpha)$, where $e \in E(S)$ and $\alpha \in \Gamma$, and analogously for $\mathcal{EP}^{\gr}_T$. Next, suppose that $\pi : Se \rightarrow Sf$ is a morphism in $\mathcal{EP}_S$, for some $e,f \in E(S)$, and let $a = \pi(e)$. Then $\pi(se) = s\pi(e) = sa$ for all $s \in S$. So from now on we will denote morphisms in $\mathcal{EP}_S$ as $\pi_a : Se \rightarrow Sf$, where $a = \pi_a(e)$. We also note that for any object $Se$ of $\mathcal{EP}_S$ and any $\alpha \in \Gamma$, we can view $Se(\alpha)$ as an object of $\mathcal{EP}^{\gr}_S$, where for each $se \in Se(\alpha) \setminus \{0\}$ we have $\deg(se) = \deg(s)\alpha^{-1}$. Since $\mathcal{F}: \mathcal{EP}^{\gr}_S \rightarrow \mathcal{EP}^{\gr}_T$ is a graded equivalence, for each $\alpha \in \Gamma$ we have \[\mathcal{T}_\alpha \mathcal{F} (Se(\varepsilon)) =  \mathcal{F} \mathcal{T}_\alpha (Se(\varepsilon)) =  \mathcal{F} (Se(\alpha)),\] and hence 
\begin{equation} \label{forgeteq}
\mathcal{U}\mathcal{F}(Se(\alpha)) = \mathcal{U}\mathcal{T}_\alpha \mathcal{F} (Se(\varepsilon)) = \mathcal{U} \mathcal{F} (Se(\varepsilon)),
\end{equation}
where $\mathcal{U}: \mathcal{EP}^{\gr}_S \rightarrow \mathcal{EP}_S$ denotes the forgetful functor. 

Now, for each object $Se$ of $\mathcal{EP}_S$ let $\mathcal{H}(Se) = \mathcal{U} \mathcal{F}(Se(\varepsilon))$. For each morphism $\pi_a : Se \rightarrow Sf$ in $\mathcal{EP}_S$, let 
\begin{equation*}
\alpha = 
\left\{ \begin{array}{ll}
\deg(a) & \text{if } a \neq 0  \\
\varepsilon & \text{otherwise } 
\end{array}\right.,
\end{equation*}
and let $\overline{\pi_a}: Se(\varepsilon) \rightarrow Sf(\alpha)$ be the the same function, viewed as a morphism in $\mathcal{EP}^{\gr}_S$. Note that for all $se \in Se(\varepsilon) \setminus \{0\}$ and $a \neq 0$ we have 
\[\deg(se) = \deg(s) = \deg(s)\alpha\alpha^{-1} = \deg(sa) = \deg(\overline{\pi_a}(se)),\]
from which it follows that $\overline{\pi_a}$ is indeed a graded $S$-map. In view of (\ref{forgeteq}), we can define $\mathcal{H}(\pi_a)=\mathcal{U}\mathcal{F}(\overline{\pi_a})$--see diagram below. 
\begin{equation*}
\xymatrix{
\mathcal{EP}_S \ar@{.>}[rr]^{\mathcal{H}} & & \mathcal{EP}_T \\
\mathcal{EP}^{\gr}_S \ar[rr]^-{\mathcal{F}} \ar[u]^{\mathcal{U}} & & \mathcal{EP}^{\gr}_T \ar[u]_{\mathcal{U}}}
\end{equation*}

To show that $\mathcal{H}$ is a functor it suffices to take two composable morphisms, $\pi_a: Se\rightarrow Sf$ and $\pi_b:Sf \rightarrow Sg$, in $\mathcal{EP}_S$, and prove that $\mathcal{H}(\pi_b\pi_a) = \mathcal{H}(\pi_b)\mathcal{H}(\pi_a)$. Viewing these as morphisms in $\mathcal{EP}^{\gr}_S$, we have $\overline{\pi_a}: Se(\varepsilon)\rightarrow Sf(\alpha)$ and $\overline{\pi_b}: Sf(\varepsilon)\rightarrow Sg(\beta)$, where $\alpha = \deg(a)$ (or $\alpha=\varepsilon$ if $a = 0$), and analogously for $\beta$. Writing $\pi_b \pi_a= \pi_{c}$ for some $c \in Sg$ (with $c \in S_{\alpha}S_{\beta}$), we also have $\overline{\pi_{c}}: Se(\varepsilon)\rightarrow Sg(\gamma)$, where $\gamma = \alpha\beta$ if $ab \neq 0$, and $\gamma=\varepsilon$ otherwise. Now, let $\overline{\pi_b}': Sf(\alpha)\rightarrow Sg(\gamma)$ be the morphism in $\mathcal{EP}^{\gr}_S$, which agrees with $\pi_b:Sf \rightarrow Sg$ as a function. Then $\overline{\pi_{c}} = (\overline{\pi_b}')(\overline{\pi_a})$, and by an argument similar to that for (\ref{forgeteq}), we have $\mathcal{U}\mathcal{F}(\overline{\pi_b}') = \mathcal{U}\mathcal{F}(\overline{\pi_b})$. Hence
\begin{multline}
\mathcal{H}(\pi_b \pi_a)=\mathcal{H}(\pi_{c})=\mathcal{U}\mathcal{F}(\overline{\pi_{c}})= \mathcal{U}\mathcal{F}((\overline{\pi_b}')(\overline{\pi_a})) \\ =\mathcal{U}\mathcal{F}(\overline{\pi_b}') \mathcal{U}\mathcal{F}(\overline{\pi_a})= \mathcal{U}\mathcal{F}(\overline{\pi_b})\mathcal{U}\mathcal{F}(\overline{\pi_a}) = \mathcal{H}(\pi_b)\mathcal{H}(\pi_a), \notag
\end{multline}
as desired.

To show that $\mathcal{H}$ is faithful, let $Se$ and $Sf$ be objects of $\mathcal{EP}_S$, and let $\pi_a, \pi_b:Se \rightarrow Sf$ be distinct morphisms. Then $\overline{\pi_a}: Se(\varepsilon) \rightarrow Sf(\alpha)$ and $\overline{\pi_b}: Se(\varepsilon) \rightarrow Sf(\beta)$ must be distinct as well (where $\alpha, \beta \in \Gamma$ are the appropriate degrees). Moreover, since $\mathcal{F}$ is faithful, we have $\mathcal{F}(\overline{\pi_a}) \not = \mathcal{F}(\overline{\pi_b})$. Now $\mathcal{F}(\overline{\pi_a})=\overline{\pi_{a'}}: Te' (\gamma) \rightarrow Tf' (\delta)$ and $\mathcal{F}(\overline{\pi_b})=\overline{\pi_{b'}}: Te' (\gamma) \rightarrow Tf' (\delta')$ in $\mathcal{EP}^{\gr}_T$, for some $a',b'\in T$ and $\gamma, \delta, \delta' \in \Gamma$. Since $\mathcal{F}(\overline{\pi_a}) \not = \mathcal{F}(\overline{\pi_b})$, we necessarily have $a'\not = b'$. Thus $\mathcal{U}(\overline{\pi_{a'}}) \not = \mathcal{U}(\overline{\pi_{b'}})$, and so $\mathcal{H}(\pi_a) \not = \mathcal{H}(\pi_b)$. 

To show that $\mathcal{H}$ is full, let $Se$ and $Sf$ be objects of $\mathcal{EP}_S$, and let $\phi : \mathcal{H}(Se) \rightarrow \mathcal{H}(Sf)$ be a morphism. Write $\mathcal{H}(Se) = Tg$ and $\mathcal{H}(Sf) = Th$ for some $g,h \in E(T)$. Then we can find $\delta \in \Gamma$ and $\overline{\pi_a} : Tg(\varepsilon) \rightarrow Th(\delta)$ such that $\mathcal{U}(\overline{\pi_a}) = \phi$. Since $\mathcal{F}$ is full, there exists a morphism $\overline{\psi}: Se(\alpha) \rightarrow Sf(\beta)$ in $\mathcal{EP}^{\gr}_S$ such that $\mathcal{F}(\overline{\psi}) = \overline{\pi_a}$, and hence $\mathcal{U}\mathcal{F}(\overline{\psi})=\phi$. Viewing $\overline{\psi}$ as a function $\psi : Se \rightarrow Sf$, and hence morphism in $\mathcal{EP}_S$, we have $\mathcal{H}(\psi) = \phi$.

Finally, to show that $\mathcal{H}$ is dense, let $Te$ be an object in $\mathcal{EP}_T$. Then $Te = \mathcal{U}(Te(\varepsilon))$, where $Te(\varepsilon)$ is in $\mathcal{EP}^{\gr}_T$. Since $\mathcal{F}$ is dense, $Te(\varepsilon)$ is isomorphic to $\mathcal{F}(Sf(\alpha))$ for some object $Sf(\alpha)$ in $\mathcal{EP}^{\gr}_S$. Hence \[\mathcal{H}(Sf) = \mathcal{U}\mathcal{F}(Sf(\varepsilon)) = \mathcal{U}\mathcal{F}(Sf(\alpha))\] is isomorphic to $Te$, as desired.
\end{proof}
 
In most of this paper we work with the categories $\Moc{S}$ and $\Gc{S}$, rather than $\FAct{S}$ and $\FGrAct{S}$. We conclude this section by observing that if the semigroup $S$ happens to have \emph{common} local units, then these categories coincide, respectively.

\begin{prop}
Let $S$ be a $\Gamma$-graded semigroup with common local units. Then $\Moc{S} = \FAct{S}$ and $\Gc{S} = \FGrAct{S}$.
\end{prop}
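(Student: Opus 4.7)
The plan is to show that for a semigroup $S$ with common local units, every unital (pointed) left $S$-set is already closed. Both asserted equalities will then follow, since $\FAct{S}$ (respectively $\FGrAct{S}$) is by definition the full subcategory of $\Moc{S}$ (respectively $\Gc{S}$) consisting of closed objects, and all morphisms already agree.

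First I would focus on the non-graded statement. Fix a unital left $S$-set $X$ and consider the canonical $S$-map $\phi: S \otimes_S X \to X$ given by $s \otimes x \mapsto sx$. Surjectivity of $\phi$ is immediate from $SX = X$, so the entire question reduces to injectivity. Suppose $sx = s'x'$ for some $s,s' \in S$ and $x,x' \in X$. Since $S$ has common local units, I can choose $u \in E(S)$ with $us = s$ and $us' = s'$. Then, using the defining tensor relation $(ab,y) \sim (a,by)$ twice, I get the chain
\[
s \otimes x \;=\; us \otimes x \;=\; u \otimes sx \;=\; u \otimes s'x' \;=\; us' \otimes x' \;=\; s' \otimes x',
\]
which gives injectivity. (The degenerate cases where $sx = 0$ are handled by the same calculation, giving $s \otimes x = u \otimes 0 = 0 \otimes 0$, so the zero of the tensor product is respected.) Hence every object of $\Moc{S}$ lies in $\FAct{S}$, establishing $\Moc{S} = \FAct{S}$.

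For the graded statement, note that $\Gc{S}$ consists of $\Gamma$-graded unital left $S$-sets with graded $S$-maps as morphisms, while $\FGrAct{S}$ is the subcategory of $\Gc{S}$ whose objects are in addition closed. Since every object of $\Gc{S}$ is in particular a unital left $S$-set, the first part shows it is automatically closed, so it lies in $\FGrAct{S}$. The morphism sets agree on the nose (graded $S$-maps in either case), giving $\Gc{S} = \FGrAct{S}$.

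I do not foresee any real obstacle; the only mildly delicate point is verifying that the short tensor calculation truly uses common local units (a single-sided local unit for $s$ alone would not suffice, since the manipulation must simultaneously convert $s \otimes x$ into $us \otimes x$ and $us' \otimes x'$ into $s' \otimes x'$ using the \emph{same} idempotent $u$). That is precisely where the common local units hypothesis, as opposed to merely local units, is essential.
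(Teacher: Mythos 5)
Your proposal is correct and matches the paper's own proof almost verbatim: both reduce the claim to injectivity of the canonical map $S \otimes_S X \to X$ and establish it via the same tensor chain $s\otimes x = us\otimes x = u\otimes sx = u\otimes ty = ut\otimes y = t\otimes y$ using a common local unit $u$. Your closing remark about why a common (rather than merely one-sided) local unit is needed is a nice observation, but the argument is essentially identical to the paper's.
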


\begin{proof}
For both claims it suffices to show that every unital left $S$-set $X$ is closed. To conclude this, it is enough to show that the function $S \otimes_S X \rightarrow X$, defined by $s\otimes x \mapsto sx$, is injective for each $X$. So let $s\otimes x, t\otimes y \in S \otimes_S X$, and suppose that $sx=ty$. Since $S$ has common local units, there exists $u \in E(S)$ such that $us=s$ and $ut=t$. Then 
\[s\otimes x = us\otimes x = u\otimes sx = u\otimes ty = ut \otimes y = t\otimes y,\]
giving the desired conclusion.
\end{proof}

\section{Graded inverse semigroups}
 
\subsection{Graded Vagner-Preston theorem}\label{vagnersect} By Cayley's theorem, any group can be embedded in a group of symmetries of a set.  Similarly, by the Vagner-Preston theorem (see \cite[Theorem 5.1.7]{howie} or \cite[Theorem~1.5.1]{Lawson}), any inverse semigroup can be embedded in an inverse semigroup of partial symmetries of a set. Next we recall the relevant terminology, examine gradings on inverse semigroups of this sort, and give a graded version of the Vagner-Preston theorem.

Let $X$ be a nonempty set. For any $A,B \subseteq X$, a bijective function $\phi: A\rightarrow B$ is called a \emph{partial symmetry} of $X$. Here we let $\dom(\phi):=A$ and $\im(\phi): = B$. We also denote the set of all partial symmetries of $X$ by $\mathcal{I}(X)$. Then $\mathcal{I}(X)$ is an inverse semigroup with respect to composition of relations, known as the \emph{symmetric inverse monoid}. Specifically, for all $\phi,\psi \in \mathcal{I}(X)$, $\phi \psi$ is taken to be the composite of $\phi$ and $\psi$ as functions, restricted to the domain $\psi^{-1}(\im(\psi)\cap \dom(\phi))$. The empty function plays the role of the zero element in $\mathcal{I}(X)$.  

We denote the cardinality of a set $X$ by $|X|$.
 
\begin{prop} \label{symminverseprop}
Let $X$ be a set such that $|X| \geq 3$. Then any grading on $\mathcal{I}(X)$ is trivial.
\end{prop}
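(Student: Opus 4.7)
The plan is to adapt the strategy used in the paper for $\mathcal{T}(X)$ and $\mathcal{T}'(X)$, where constant maps forced triviality. In $\mathcal{I}(X)$ there are no nonzero constants, so instead I will use that the idempotents are exactly the partial identities $\mathrm{id}_A$ ($A\subseteq X$), which must all have degree $\varepsilon$ in any grading. The key is to exploit both $2$-cycles and $3$-cycles, the latter requiring $|X|\geq 3$.

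First, for distinct $a,b\in X$, let $\phi_{a,b}:\{a\}\to\{b\}$ be the singleton partial bijection sending $a$ to $b$, and set $\alpha_{a,b}:=\deg(\phi_{a,b})$. I will then show $\alpha_{a,b}=\varepsilon$ for all such $a,b$. Fix a third element $c\in X\setminus\{a,b\}$, and consider:
\begin{itemize}
\item the transposition $\sigma:\{a,b\}\to\{a,b\}$ swapping $a$ and $b$, which satisfies $\sigma^2=\mathrm{id}_{\{a,b\}}$, forcing $\deg(\sigma)^2=\varepsilon$;
\item the $3$-cycle $\tau:\{a,b,c\}\to\{a,b,c\}$ with $a\mapsto b\mapsto c\mapsto a$, which satisfies $\tau^3=\mathrm{id}_{\{a,b,c\}}$, forcing $\deg(\tau)^3=\varepsilon$.
\end{itemize}
A direct unwinding of the composition rule in $\mathcal{I}(X)$ shows $\sigma\cdot\mathrm{id}_{\{a\}}=\phi_{a,b}=\tau\cdot\mathrm{id}_{\{a\}}$, so multiplying by an idempotent (which has degree $\varepsilon$) gives $\deg(\sigma)=\deg(\tau)=\alpha_{a,b}$. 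Hence $\alpha_{a,b}^2=\alpha_{a,b}^3=\varepsilon$, and since $\gcd(2,3)=1$ this forces $\alpha_{a,b}=\varepsilon$.

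The final step is to reduce an arbitrary $\phi\in\mathcal{I}(X)\setminus\{0\}$ to one of the $\phi_{a,b}$. Picking any $a\in\dom(\phi)$, the product $\phi\cdot\mathrm{id}_{\{a\}}$ is a singleton partial bijection: it equals $\mathrm{id}_{\{a\}}$ if $\phi(a)=a$, and $\phi_{a,\phi(a)}$ otherwise. Both possibilities have degree $\varepsilon$, so $\deg(\phi)\cdot\varepsilon=\varepsilon$, giving $\deg(\phi)=\varepsilon$. Thus the grading is trivial.

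The main obstacle I expect is simply being careful with the composition convention: one must verify that the restrictions $\sigma\cdot\mathrm{id}_{\{a\}}$ and $\tau\cdot\mathrm{id}_{\{a\}}$ computed via $\psi^{-1}(\im(\psi)\cap\dom(\phi))$ genuinely land on $\phi_{a,b}$, not on the empty function. The role of the hypothesis $|X|\geq 3$ is exactly to provide a third point enabling the $3$-cycle; with only two points, only involutive relations are available, and these alone do not force $\alpha_{a,b}=\varepsilon$ in grading groups with $2$-torsion, so some version of $|X|\geq 3$ is essential rather than cosmetic.
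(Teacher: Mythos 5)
Your proof is correct, but it takes a genuinely different route from the paper's. The paper first observes that any partial symmetry with a fixed point has degree $\varepsilon$ (if $\phi(x)=x$, then $\mathrm{id}_{\{x\}}\phi=\mathrm{id}_{\{x\}}$ forces $\deg(\phi)=\varepsilon$), and then, for arbitrary nonzero $\phi$ and $x\in\dom(\phi)$, uses the third point $y\notin\{x,\phi(x)\}$ to build a single auxiliary map $\psi$ with $\psi(\phi(x))=x$ and $\psi(y)=y$; since $\psi$ and $\psi\phi$ both have fixed points, $\varepsilon=\deg(\psi\phi)=\deg(\psi)\deg(\phi)$ gives $\deg(\phi)=\varepsilon$. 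You instead reduce everything to the singleton maps $\phi_{a,b}$ by right-multiplying by the idempotent $\mathrm{id}_{\{a\}}$, and then kill $\deg(\phi_{a,b})$ by exhibiting $\phi_{a,b}$ simultaneously as a restriction of a $2$-cycle and of a $3$-cycle, so that its degree satisfies $\alpha^2=\alpha^3=\varepsilon$ and hence is trivial. Your computations of the restricted products under the convention that $\phi\psi$ means ``first $\psi$, then $\phi$'' do check out, and the reduction step $\deg(\phi)=\deg(\phi\cdot\mathrm{id}_{\{a\}})$ is sound. Both arguments are elementary and both use $|X|\geq 3$ in an essential, visible way (the paper to find the extra fixed point $y$, you to form the $3$-cycle); the paper's is slightly shorter and needs only one auxiliary element, while yours isolates a reusable principle -- an element lying below, in the natural partial order, elements whose powers of coprime orders are idempotent must have degree $\varepsilon$ -- and your closing remark about $2$-torsion correctly explains why the two-point case genuinely fails, consistent with the paper's subsequent example of a nontrivial $\Z_2$-grading on $\mathcal{I}(X)$ for $|X|=2$.
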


\begin{proof}
Let $\Gamma$ be a group, let $\chi : \mathcal{I}(X)\setminus \{0\} \rightarrow \Gamma$ be a grading, and let $\phi \in \mathcal{I}(X) \setminus \{0\}$. We will show that $\deg(\phi) = \chi(\phi)=\varepsilon$.

First, suppose that $\phi(x) = x$ for some $x \in \dom(\phi)$, and let $\psi \in \mathcal{I}(X)$ be the unique element with $\dom(\psi) = \{x\} = \im(\psi)$. Then, $\psi$ is an idempotent and so $\deg(\psi) = \varepsilon$. Since $\psi\phi = \psi$, it follows that
\[\varepsilon = \deg(\psi) = \deg(\psi\phi) = \deg(\psi)\deg(\phi) = \deg(\phi).\]

Now, take $\phi \in \mathcal{I}(X) \setminus \{0 \}$ to be arbitrary, and let $x \in \dom(\phi)$. Then, by our hypothesis on $X$, we can find some $y \in X \setminus \{x, \phi(x)\}$. Let $\psi \in \mathcal{I}(X)$ be defined by $\dom(\psi) = \{\phi(x), y\}$, $\im(\psi) = \{x,y\}$, $\psi(\phi(x)) = x$, and $\psi(y) = y$. Then, by the previous paragraph, we have $\deg(\psi) = \varepsilon = \deg (\psi\phi)$, and therefore
\[\varepsilon = \deg(\psi\phi) = \deg(\psi)\deg(\phi) = \deg(\phi),\]
as desired.
\end{proof}

If $|X| = 1$, then $\mathcal{I}(X) \setminus \{0\}$ consists of one idempotent element, and so any grading on $\mathcal{I}(X)$ is trivial. However, if $|X| = 2$, then $\mathcal{I}(X)$ has a nontrivial grading, as the next example shows.

\begin{example}
Let $X = \{x,y\}$, and write 
\[\mathcal{I}(X) = \{0, 1, \tau, \theta_{xx}, \theta_{xy}, \theta_{yx}, \theta_{yy}\},\]
where $\tau$ denotes the one nontrivial permutation of $X$, and $\theta_{ij}$ is the only element of $\mathcal{I}(X)$ such that $\dom(\theta_{ij}) = \{j\}$ and $\im(\theta_{ij}) = \{i\}$ ($i,j \in X$). Define $\phi : \mathcal{I}(X) \setminus \{0\} \to \Z_2$ by 
\[\phi(1) = \phi(\theta_{xx}) = \phi(\theta_{yy}) = 0\] 
and 
\[\phi(\tau) = \phi(\theta_{xy}) = \phi(\theta_{yx}) = 1.\] 
Then it is easy to check that $\phi$ is a grading.
\end{example}

In view of Proposition~\ref{symminverseprop}, a graded version of the Vagner-Preston theorem requires a graded analogue of $\mathcal{I}(X)$, which we construct next. Let $X$ be a nonempty $\Gamma$-graded set. For each nonempty $A \subseteq X$ and $\alpha \in \Gamma$, we set $A_\alpha:= A\cap X_\alpha$. For each $\alpha \in \Gamma$ let 
\begin{equation}\label{mothercomp}
\mathcal{I}(X)_\alpha:= \big \{\phi \in \mathcal{I}(X) \, \big \vert  \, \phi(\dom(\phi)_\beta) \subseteq X_{\alpha\beta} \text{ for all } \beta \in \Gamma \big \}, 
\end{equation}
and set 
\begin{equation}\label{mothersemigroup}
\mathcal{I}^{\gr}(X):= \bigcup _{\alpha \in \Gamma} \mathcal{I}(X)_\alpha. 
\end{equation}

Next we show that $\mathcal{I}^{\gr}(X)$ is a graded inverse semigroup. Among other things, it is a useful platform for exploring the differences between gradings, strong gradings (Definition~\ref{strgrdef}), and locally strong gradings (Definition~\ref{locallystrgrdef}).

\begin{prop}\label{meme}
Let $X$ be a nonempty $\Gamma$-graded set. Then the following hold. 
\begin{enumerate}[\upshape(1)]
\item $\mathcal{I}^{\gr}(X)$ is a $\Gamma$-graded inverse semigroup. 

\smallskip

\item $\mathcal{I}^{\gr}(X)$ is strongly $\Gamma$-graded if and only if $|X_{\alpha}|=|X_{\beta}|$ for all $\alpha, \beta \in \Gamma$.

\smallskip

\item $\mathcal{I}^{\gr}(X)$ is locally strongly $\Gamma$-graded if and only if $X_\alpha\not = \emptyset$ for all $\alpha \in \Gamma$.
\end{enumerate}
\end{prop}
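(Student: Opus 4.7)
The plan is to establish the three parts in sequence, with most of the substantive work in (2) and (3). For part~(1), I will verify the conditions for a graded inverse semigroup by tracking degrees. If $\phi\in\mathcal{I}(X)_{\alpha}$ and $\psi\in\mathcal{I}(X)_{\beta}$, then for any non-zero $x\in\dom(\phi\psi)_{\gamma}$ the defining property of $\psi$ gives $\psi(x)\in X_{\beta\gamma}$, hence $\phi\psi(x)\in X_{\alpha\beta\gamma}$, so $\phi\psi\in\mathcal{I}(X)_{\alpha\beta}$. An analogous computation on $\phi^{-1}$ shows $\phi^{-1}\in\mathcal{I}(X)_{\alpha^{-1}}$, and disjointness $\mathcal{I}(X)_{\alpha}\cap\mathcal{I}(X)_{\beta}=\{0\}$ for $\alpha\neq\beta$ follows by picking a single non-zero $x$ in the domain of a common non-zero element and comparing $\alpha\deg(x)$ with $\beta\deg(x)$. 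Thus $\mathcal{I}^{\gr}(X)$ is a graded subsemigroup of $\mathcal{I}(X)$ closed under inverses, hence a graded inverse semigroup.

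For part~(2), I will first observe that $\mathcal{I}^{\gr}(X)$ has local units (the $\varepsilon$-degree idempotents $1_{\dom(\phi)}$ and $1_{\im(\phi)}$), so Proposition~\ref{hgfgftgr} applies. If $\mathcal{I}^{\gr}(X)$ is strongly graded, then for each $\alpha\in\Gamma$ I may factor $1_X=\phi\psi$ with $\phi\in\mathcal{I}(X)_{\alpha}$ and $\psi\in\mathcal{I}(X)_{\alpha^{-1}}$; tracing the composition forces $\dom(\psi)=X$ and $\phi\circ\psi=\mathrm{id}_X$, so $\psi$ injects $X_{\beta}$ into $X_{\alpha^{-1}\beta}$, yielding $|X_{\beta}|\leq|X_{\alpha^{-1}\beta}|$ for all $\alpha,\beta$. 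Varying the indices then gives $|X_{\gamma}|=|X_{\delta}|$ for every pair $\gamma,\delta$. Conversely, assuming all $|X_{\beta}|$ coincide, I will verify characterisation~(4) of Proposition~\ref{hgfgftgr} by writing each idempotent $1_A$ as $ss^{-1}$ for some $s\in\mathcal{I}(X)_{\alpha}$: for each $\beta$ the inequality $|X_{\beta}|=|X_{\alpha\beta}|\geq|A\cap X_{\alpha\beta}|$ lets me choose a subset $B_{\beta}\subseteq X_{\beta}$ of the correct cardinality together with a bijection $s_{\beta}\colon B_{\beta}\to A\cap X_{\alpha\beta}$, and the disjointness of the $X_{\gamma}$ on non-zero elements allows me to glue these into a single $s\in\mathcal{I}(X)_{\alpha}$ with $\im(s)=A$.

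For part~(3), I will use Proposition~\ref{sussa}(2): $\mathcal{I}^{\gr}(X)$ is locally strongly graded iff for every nonzero idempotent $u$ and every $\alpha\in\Gamma$, some nonzero $v\in E(\mathcal{I}^{\gr}(X))_{\alpha}$ satisfies $v\leq u$. Idempotents here are the maps $1_A$ with nonempty $A\subseteq X$, and $1_B\leq 1_A$ iff $B\subseteq A$. Taking $u=1_{\{x_0\}}$ for a non-zero $x_0$ of degree $\gamma_0$ forces $v=1_{\{x_0\}}=ss^{-1}$ with $s\in\mathcal{I}(X)_{\alpha}$, which in turn forces $\dom(s)\subseteq X_{\alpha^{-1}\gamma_0}$ to be nonempty; letting $\alpha$ vary, every $X_{\delta}$ must be nonempty. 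Conversely, if every $X_{\delta}\neq\emptyset$, then for any $\alpha$ and nonempty $A$, picking $a\in A$ of degree $\gamma$ and $b\in X_{\alpha^{-1}\gamma}$ yields $s\colon\{b\}\to\{a\}$ in $\mathcal{I}(X)_{\alpha}$ with $ss^{-1}=1_{\{a\}}\leq 1_A$.

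The principal technical point will be the backward direction of~(2), where the cardinality equalities must be used to assemble a single partial bijection from independently chosen piecewise data on each graded component while keeping the global degree shift $\alpha$ consistent; fortunately the disjointness of the graded pieces on non-zero elements makes this gluing unambiguous.
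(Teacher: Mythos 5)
Your proof is correct. Parts (1) and (3) follow essentially the paper's own arguments (the singleton-domain construction in (3) and the appeal to Proposition~\ref{sussa}(2) are exactly what the paper does, except that the paper takes $u = 1_{X_\beta}$ rather than a singleton for the necessity direction). Part (2) is where you genuinely diverge: the paper works directly from Definition~\ref{strgrdef}, factoring an arbitrary $\phi \in \mathcal{I}(X)_{\alpha\beta}$ as $\rho\psi$ by matching cardinalities of the sets $\dom(\phi)_\gamma$ componentwise, and for the converse exhibits the single unfactorable idempotent $1_{X_\beta}$ when $|X_\alpha| < |X_\beta|$. You instead route both directions through the idempotent criteria of Proposition~\ref{hgfgftgr}: condition (2) applied to $1_X$ gives the two-sided cardinality inequalities (and hence equality by Cantor--Schr\"oder--Bernstein), and condition (4) reduces the sufficiency direction to realising each idempotent $1_A$ as $ss^{-1}$ with $s \in \mathcal{I}(X)_\alpha$, which only requires matching the image of $s$ to $A$ rather than building a full factorisation of an arbitrary partial bijection. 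Your route is slightly lighter on the combinatorics (idempotents $1_A$ are easier to handle than general elements) at the cost of invoking Proposition~\ref{hgfgftgr}, which in turn needs local units; you correctly note that these exist since $\mathcal{I}^{\gr}(X)$ is an inverse semigroup. Both arguments are sound, and the gluing step you flag as the main technical point is unproblematic for exactly the reason you give: the components $X_\gamma$ are pairwise disjoint, so the piecewise-defined bijections assemble into a single element of $\mathcal{I}(X)_\alpha$.
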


\begin{proof}
(1) By (\ref{mothercomp}) and (\ref{mothersemigroup}), we have $\mathcal{I}^{\gr}(X) \subseteq \mathcal{I}(X)$, $0\in \mathcal{I}(X)_\alpha$ for every $\alpha\in \Gamma$, and $\mathcal{I}(X)_\alpha \cap \mathcal{I}(X)_\beta = \{0\}$ for all distinct $\alpha, \beta\in \Gamma$. Thus, to conclude that $\mathcal{I}^{\gr}(X)$ is a $\Gamma$-graded subsemigroup of $\mathcal{I}(X)$ it suffices to show that for all $\alpha, \beta \in \Gamma$,  $\phi\in\mathcal{I}^{\gr}(X)_\alpha$, and $\psi\in \mathcal{I}^{\gr}(X)_\beta$, such that $\psi\phi \not = 0$, we have $\psi\phi \in \mathcal{I}^{\gr}(X)_{\beta\alpha}$. Taking $\phi$ and $\psi$ as above, let $\gamma \in \Gamma$ and $x \in \dom (\psi\phi)_\gamma$. Then, by (\ref{mothercomp}), $\phi(x) \in (\im(\phi)\cap \dom(\psi))_{\alpha\gamma}$, and, consequently, $\psi(\phi(x)) \in \im(\psi\phi) _{\beta \alpha \gamma}$, showing that $\psi\phi \in \mathcal{I}^{\gr}(X)_{\beta\alpha}$.

Next, let $\alpha, \beta \in \Gamma$, $\phi\in\mathcal{I}^{\gr}(X)_\alpha$, and $x \in \dom (\phi^{-1})_\beta = \im(\phi)_\beta$, and write $\phi(y)=x$ for some $y \in \dom(\phi)$. Then \[\phi^{-1}(x) = y \in \dom(\phi)_{\alpha^{-1}\beta} = \im(\phi^{-1})_{\alpha^{-1}\beta},\] and so $\phi^{-1} \in \mathcal{I}^{\gr}(X)_{\alpha^{-1}} \subseteq \mathcal{I}^{\gr}(X)$. It follows that $\mathcal{I}^{\gr}(X)$ is an inverse subsemigroup of $\mathcal{I}(X)$. 

(2) Suppose that $|X_{\delta_1}|=|X_{\delta_2}|$ for all $\delta_1, \delta_2 \in \Gamma$, and let $\alpha, \beta \in \Gamma$ and $\phi \in \mathcal{I}(X)_{\alpha\beta}$. By hypothesis, for each $\gamma \in \Gamma$, we can find $Y_{\beta\gamma} \subseteq X_{\beta\gamma}$ such that $|Y_{\beta\gamma}| = |\dom(\phi)_\gamma|$. Now let $\psi \in \mathcal{I}(X)$ be such that $\dom(\psi)_\gamma = \dom(\phi)_\gamma$ and $\psi(\dom(\psi)_\gamma) = Y_{\beta\gamma}$ for each $\gamma \in \Gamma$, and let $\rho \in \mathcal{I}(X)$ be such that $\dom(\rho) = \bigcup_{\delta \in \Gamma}Y_{\beta\delta} = \im(\psi)$ and $\rho\psi = \phi$. Then clearly, $\psi \in \mathcal{I}(X)_{\beta}$ and $\rho \in \mathcal{I}(X)_{\alpha}$. Thus $\phi \in \mathcal{I}(X)_{\alpha}\mathcal{I}(X)_{\beta}$, which implies that $\mathcal I^{\gr}(X)$ is strongly graded.

Conversely, suppose that $|X_{\alpha}|<|X_{\beta}|$ for some $\alpha, \beta \in \Gamma$. Let $\phi \in \mathcal{I}(X)$ be such that $\dom(\phi)=X_{\beta}=\im(\phi)$. Then $\phi \in \mathcal{I}(X)_{\varepsilon}$. Suppose that $\phi = \rho\psi$ for some $\rho \in \mathcal{I}(X)_{\beta\alpha^{-1}}$ and $\psi \in \mathcal{I}(X)_{\alpha\beta^{-1}}$. Then necessarily $X_{\beta}\subseteq \dom(\psi)$ and $\psi(X_{\beta})\subseteq X_{\alpha}$, which contradicts $|X_{\alpha}|<|X_{\beta}|$. Hence $\phi \notin\mathcal{I}(X)_{\beta\alpha^{-1}}\mathcal{I}(X)_{\alpha\beta^{-1}}$, and so $\mathcal{I}^{\gr}(X)$ is not strongly graded.

(3) Suppose that $X_\gamma \not = \emptyset$ for all $\gamma \in \Gamma$, and let $\alpha, \beta \in \Gamma$ and $\phi \in \mathcal{I}(X)_{\alpha\beta} \setminus \{0\}$. Since $\phi \neq 0$, we can find some $x \in \dom(\phi)$, where, say, $x \in X_{\gamma}$ ($\gamma \in \Gamma$). By hypothesis, there exists $y \in X_{\beta\gamma}$. Now let $\rho \in \mathcal{I}(X)$ be the map determined by $\dom(\rho) = \{x\}$ and $\im(\rho) = \{y\}$. Then clearly $\rho \in \mathcal{I}(X)_{\beta} \setminus \{0\}$, and so \[\phi (\rho^{-1}\rho) = (\phi \rho^{-1})\rho \in \mathcal{I}(X)_{\alpha} \mathcal{I}(X)_{\beta} \setminus \{0\}.\] Since $\phi \rho^{-1}\rho \leq \phi$, we conclude that $\mathcal{I}^{\gr}(X)$ is locally strongly $\Gamma$-graded.

Conversely, suppose that $X_{\alpha} = \emptyset$ for some $\alpha \in \Gamma$. Let $\beta \in \Gamma$ be such that $X_{\beta} \neq \emptyset$, and let $\phi \in E(\mathcal{I}^{\gr}(X))$ be such that $\dom(\phi)=X_{\beta}=\im(\phi)$. Seeking a contradiction to Proposition~\ref{sussa}, suppose that there exists $\rho \in E(\mathcal{I}^{\gr}(X))_{\alpha^{-1}\beta} \setminus \{0\}$, such that $\rho \leq \phi$. Then $\rho = \psi\psi^{-1}$ for some $\psi \in \mathcal{I}^{\gr}(X)_{\alpha^{-1}\beta}$, and $\dom(\rho)\subseteq \dom(\phi)=X_{\beta}$. Since $\psi^{-1} \in \mathcal{I}^{\gr}(X)_{\alpha\beta^{-1}}$, we have $\psi^{-1}(X_{\beta})  \subseteq X_{\alpha} = \emptyset$. It follows that $\rho = 0$, producing the desired contradiction. Hence, if $X_{\alpha} = \emptyset$ for some $\alpha \in \Gamma$, then $\mathcal{I}^{\gr}(X)$ cannot be locally strongly $\Gamma$-graded.
\end{proof}
 
We are now ready for our graded version of the Vagner-Preston theorem. The construction is fundamentally the same as in the original theorem, but with some key differences.

\begin{prop}[Graded Vagner-Preston Theorem] \label{grvagnerpreston}
Let $S$ be a $\Gamma$-graded inverse semigroup. Then there is a graded injective homomorphism $\psi :S \rightarrow \mathcal{I}^{\gr}(X)$ for some $\Gamma$-graded set $X$. 
\end{prop}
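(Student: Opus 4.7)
The plan is to mimic the classical Vagner-Preston construction. Take $X = S$, viewed as a pointed $\Gamma$-graded set with $0_X = 0_S$ and $X_\alpha = S_\alpha$, inheriting the grading directly from $S$. For each $s \in S \setminus \{0\}$ set
\[\dom(\psi_s) = \{x \in S \mid s^{-1}sx = x\} = s^{-1}sS, \qquad \psi_s(x) = sx,\]
and let $\psi_0 = 0$ (the empty function). The claim is then that $\psi \colon S \to \mathcal{I}^{\gr}(X)$, $s \mapsto \psi_s$, is a graded injective homomorphism.

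First I would verify the basic properties of each $\psi_s$. Using $ss^{-1}s = s$ and the commutativity of $E(S)$, one checks directly that $\psi_s$ is a bijection from $s^{-1}sS$ onto $ss^{-1}S$ with two-sided inverse $y \mapsto s^{-1}y$. To see that $\psi_s$ is graded of degree $\alpha := \deg(s)$, let $\beta \in \Gamma$ and let $x \in \dom(\psi_s)_\beta$ be nonzero. Then $sx \neq 0$ (otherwise $x = s^{-1}sx = s^{-1}\cdot 0 = 0$), so $\psi_s(x) = sx \in S_{\alpha\beta} \subseteq X_{\alpha\beta}$, whence $\psi_s \in \mathcal{I}(X)_\alpha \subseteq \mathcal{I}^{\gr}(X)$. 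For injectivity of $\psi$, note that $s^{-1}s \in \dom(\psi_s)$ with $\psi_s(s^{-1}s) = s$; if $\psi_s = \psi_t$, then $s^{-1}s \in \dom(\psi_t)$ and $t^{-1}t \in \dom(\psi_s)$ force $t^{-1}t\cdot s^{-1}s = s^{-1}s$ and $s^{-1}s\cdot t^{-1}t = t^{-1}t$, and the commutativity of $E(S)$ yields $s^{-1}s = t^{-1}t$; applying both functions to this common element then gives $s = t$.

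The main obstacle is verifying the homomorphism identity $\psi_{st} = \psi_s\psi_t$ in $\mathcal{I}(X)$ (recall that $\psi_s\psi_t$ means first apply $\psi_t$, then $\psi_s$). When $st \neq 0$, I would show $\dom(\psi_{st}) = \dom(\psi_s\psi_t)$ by translating the single condition $(st)^{-1}(st)x = t^{-1}s^{-1}stx = x$ into the pair of conditions $t^{-1}tx = x$ and $s^{-1}stx = tx$ that characterise $\dom(\psi_s\psi_t)$; both directions rely on the commutativity of the idempotents $tt^{-1}$ and $s^{-1}s$. On this common domain both maps send $x$ to $stx$. When $st = 0$, one must instead show $\im(\psi_t)\cap \dom(\psi_s) \subseteq \{0\}$, so that $\psi_s\psi_t$ is the empty function, matching $\psi_{st} = \psi_0 = 0$; indeed, if $y$ lies in this intersection then $tt^{-1}y = y = s^{-1}sy$, and left-multiplying the first equation by $s$ yields $sy = stt^{-1}y = 0$, whence $y = s^{-1}sy = 0$. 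The bulk of the technical work lies in this homomorphism verification and the careful juggling of idempotent identities in $S$.
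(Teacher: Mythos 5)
Your construction is the paper's in spirit, but there is a genuine error: you take $X = S$ with the zero of $S$ included, and this breaks the argument at exactly the point you flag as the main obstacle. Since $0 \in s^{-1}sS = \dom(\psi_s)$ and $\psi_s(0)=0$ for every $s$, the set $\im(\psi_t)\cap\dom(\psi_s)$ always contains $0$; when $st=0$ your own computation shows this intersection equals $\{0\}$, not $\emptyset$. Consequently $\psi_s\psi_t$ is the partial identity on $\{0\}$ --- a \emph{nonzero} idempotent of $\mathcal{I}(X)$ --- while $\psi_{st}=\psi_0$ is the empty function, so $\psi_{st}\neq\psi_s\psi_t$ and $\psi$ is not a homomorphism. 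Redefining $\psi_0$ to be the partial identity on $\{0\}$ would repair the multiplication but not the grading: a distinguished element $0_X$ lying in every component $X_\alpha$ makes the $X_\alpha$ non-disjoint, and then the partial identity on $\{0_X\}$ lies in $\mathcal{I}(X)_\alpha$ for \emph{every} $\alpha$, so the components of $\mathcal{I}^{\gr}(X)$ intersect in a nonzero element and $\mathcal{I}^{\gr}(X)$ fails to be $\Gamma$-graded. (Proposition~\ref{meme} and its proof require $X$ to carry an honest degree map $X\to\Gamma$, i.e.\ a genuine partition with no universal zero; this is why the proposition can speak of $X_\alpha=\emptyset$.)

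The fix is precisely the paper's choice: take $X = S\setminus\{0\}$ and let $\theta_s$ be the bijection from $s^{-1}sS\setminus\{0\}$ onto $ss^{-1}S\setminus\{0\}$ given by $x\mapsto sx$ (well defined because $s^{-1}sx\neq 0$ forces $sx\neq 0$). Then for $st=0$ the middle intersection $\im(\theta_t)\cap\dom(\theta_s)$ is genuinely empty and the composite is the empty function, matching $\theta_{st}=\theta_0=0$. With this single change the rest of your argument --- the degree computation showing $\theta_s\in\mathcal{I}(X)_{\deg(s)}$, the injectivity argument via $s^{-1}s=t^{-1}t$, and the domain-chasing for $\theta_{st}=\theta_s\theta_t$ when $st\neq0$ --- goes through and essentially coincides with the paper's proof.
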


\begin{proof}
Let $X=S \backslash \{0\}$. Then $X$ is a $\Gamma$-graded set, with respect to the grading induced by that on $S$. For each $\alpha \in \Gamma$ and $s \in S_\alpha \backslash \{0\}$ define a function
\begin{align*}
\theta_s: s^{-1}s S \backslash \{0\}  &\longrightarrow ss^{-1} S \backslash \{0\}\\
x &\longmapsto sx. 
\end{align*}
Note that if $s^{-1}sx \neq 0$ for some $x \in S$, then \[\theta_s(s^{-1}sx) = ss^{-1}sx = sx \neq 0,\] from which it follows that $\theta_s$ is well-defined. Also, if $\theta_s(s^{-1}sx) = \theta_s(s^{-1}sy)$ for some $x,y\in S$, then \[sx = ss^{-1}sx = ss^{-1}sy = sy,\] and so $s^{-1}sx = s^{-1}sy$. Thus $\theta_s$ is injective. Since \[\theta_s(s^{-1}ss^{-1}x) = \theta_s(s^{-1}x) = ss^{-1}x\] for all $x \in S$, $\theta_s$ is bijective. Moreover, for all $\beta \in \Gamma$ and appropriate $x \in S_\beta$ we have $\theta_s(s^{-1}sx)= sx \in S_{\alpha\beta}$, from which it follows that $\theta_s \in \mathcal{I}(X)_\alpha$. 

We can therefore define a map 
\begin{align*} 
\psi: S & \longrightarrow  \mathcal{I}^{\gr}(X)\\
s & \longmapsto \theta_s,
\end{align*}
where $\theta_0$ is understood to be the zero element of $\mathcal{I}^{\gr}(X)$. The last computation in the previous paragraph implies that $\psi(S_\alpha) \subseteq \mathcal{I}(X)_\alpha$ for all $\alpha \in \Gamma$, and so $\psi$ is a graded map. To show that $\theta$ is injective, suppose that $\theta_s = \theta_t$ for some $s,t \in S$. Then $s^{-1}s S = t^{-1}t S$, and so according to \cite[Lemma 5.1.6(1)]{howie}, $s^{-1}s = t^{-1}t$. Hence $s = \theta_s(s^{-1}s) = \theta_t(t^{-1}t) = t$, and so $\psi$ is injective.  

It remains to show that $\psi$ is a homomorphism. Clearly, $\theta_s \theta_0 = \theta_0 =\theta_0\theta_s$ for any $s \in S$. Showing that $\theta_s \theta_t = \theta_{st}$ for $s,t \in S\setminus \{0\}$ can be accomplished using exactly the same, somewhat lengthy, argument as in the textbook proof of the Vagner-Preston theorem (see, e.g., \cite[Theorem 5.1.7]{howie} or \cite[Theorem~1.5.1]{Lawson}), so we will not repeat it here.
\end{proof}

\subsection{Strongly graded inverse semigroups}\label{iuhfgbdftie}

Our next goal is to provide an analogue for inverse semigroups of Dade's theorem~\cite[Theorem~2.8]{dade} (see also~\cite[\S 1.5]{hazi} and \cite[Theorem~3.1.1]{grrings}), which describes strongly graded rings using equivalences of appropriate categories. We begin with several lemmas.

\begin{lemma} \label{funct-lemm}
Let $S$ be a $\Gamma$-graded semigroup with local units. Then the following hold.
\begin{enumerate}[\upshape(1)]

\item  Let $(-)_\varepsilon : \Gc{S} \rightarrow \Moc{S_\varepsilon}$ be the mapping defined by 
\begin{align*}
X & \longmapsto X_\varepsilon \\
\phi & \longmapsto \phi|_{X_\varepsilon}
\end{align*}
for all objects $X$ and morphisms $\phi : X \rightarrow Y$ in $\Gc{S}$, where $\phi|_{X_\varepsilon}$ denotes the restriction of $\phi$ to $X_\varepsilon$. Then $(-)_\varepsilon$ is a functor, to which we refer as the \emph{restriction functor}.

\smallskip 

\item Let $S \otimes_{S_\varepsilon} - : \Moc{S_\varepsilon} \rightarrow \Gc{S}$ be the mapping defined by 
\begin{align*}
X & \longmapsto S \otimes_{S_\varepsilon} X \\
\phi & \longmapsto 1_S  \otimes \phi
\end{align*}
for all objects $X$ and morphisms $\phi : X \rightarrow Y$ in $\Moc{S_\varepsilon}$, where $(1_S  \otimes \phi) (s\otimes x) = s \otimes \phi(x)$ for all $s\otimes x \in  S \otimes_{S_\varepsilon} X$. Here the $\Gamma$-grading on $S \otimes_{S_\varepsilon} X$ is as in (\ref{tensorgrade}), with $X$ given the trivial grading ($X=X_\varepsilon$). Then $S \otimes_{S_\varepsilon} -$ is a functor, to which we refer as the \emph{induction functor}.
\end{enumerate}
\end{lemma}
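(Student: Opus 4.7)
The plan is to verify, for each of the two mappings, three things: (i) it sends objects of the source category to objects of the target category, (ii) it sends morphisms to morphisms, and (iii) it preserves identities and composition. Identity preservation is immediate from the formulas, and composition preservation follows from $(\phi \psi)|_{X_\varepsilon} = \phi|_{X_\varepsilon}\, \psi|_{X_\varepsilon}$ in part (1) and from $1_S \otimes (\phi\psi) = (1_S \otimes \phi)(1_S \otimes \psi)$ in part (2), so the only substantive work is in clauses (i) and (ii).

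For (1), given a unital $\Gamma$-graded left $S$-set $X$, the grading condition $S_\alpha X_\beta \subseteq X_{\alpha\beta}$ specialises to $S_\varepsilon X_\varepsilon \subseteq X_\varepsilon$, which makes $X_\varepsilon$ a pointed left $S_\varepsilon$-set (with $0_X$ as zero, since $0_X$ lies in every homogeneous component). To see that $X_\varepsilon$ is unital over $S_\varepsilon$, I would take $x \in X_\varepsilon$, write $x = sy$ for some $s \in S$ and $y \in X$ using unitality of $X$ over $S$, and pick a local unit $u \in E(S)$ with $us = s$; since idempotents have trivial degree, $u \in S_\varepsilon$, and $ux = usy = sy = x$ shows that $x \in S_\varepsilon X_\varepsilon$. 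For a graded $S$-map $\phi : X \to Y$, the restriction $\phi|_{X_\varepsilon}$ lands in $Y_\varepsilon$ by the very definition of a graded morphism in $\Gc{S}$, and it is evidently an $S_\varepsilon$-map.

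For (2), I will treat $S$ as an $S_\varepsilon$-biset, view $X$ as trivially graded ($X = X_\varepsilon$), and form $S \otimes_{S_\varepsilon} X$, which is a pointed left $S$-set in the standard way. The grading from (\ref{tensorgrade}) collapses to $(S \otimes_{S_\varepsilon} X)_\alpha = \{s \otimes x \mid \deg(s) = \alpha\} \cup \{0\}$, and its well-definedness (the degree does not depend on the chosen representative of the equivalence class) is exactly the content of \cite[Proposition 8.1.8]{howie} specialised to this setting. Unitality over $S$ is immediate: for any $s \otimes x$, pick a local unit $u \in E(S)$ with $us = s$ and write $s \otimes x = u(s \otimes x)$. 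Given a morphism $\phi : X \to Y$ in $\Moc{S_\varepsilon}$, the map $1_S \otimes \phi$ is well defined and $S$-equivariant by the usual tensor product argument, and it is graded because it fixes the left tensor factor, which is what carries the degree.

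The main point requiring care is the well-definedness of the grading on $S \otimes_{S_\varepsilon} X$: a tensor $s \otimes x$ has many representatives, and one must check that if $s \otimes x = s' \otimes x'$ in the quotient, then $\deg(s) = \deg(s')$. This is precisely what the cited Proposition 8.1.8 of Howie delivers, and once that is in hand the rest of the bookkeeping is routine, using only that $E(S) \subseteq S_\varepsilon$, so that local units of $S$ are simultaneously local units of $S_\varepsilon$, and that graded maps preserve homogeneous components by definition.
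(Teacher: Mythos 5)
Your proposal is correct and follows essentially the same route as the paper's own proof: both establish unitality of $X_\varepsilon$ over $S_\varepsilon$ by combining unitality of $X$ with local units of $S$ and the fact that $E(S)\subseteq S_\varepsilon$, both appeal to \cite[Proposition 8.1.8]{howie} for well-definedness of the grading on $S\otimes_{S_\varepsilon}X$, and both note that $1_S\otimes\phi$ is graded because the degree is carried entirely by the left tensor factor. The remaining functoriality checks are routine in both treatments.
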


\begin{proof}
(1) Since $S_\varepsilon$ is a subsemigroup of $S$, any left $S$-set is automatically a left $S_\varepsilon$-set. Now let $X$ be any object of $\Gc{S}$. Then, clearly $S_\varepsilon X_\varepsilon \subseteq X_\varepsilon$. Moreover, since $X$ is a unital left $S$-set, and since $S$ has local units, for each $x \in X$ there exists $u \in E(S) \subseteq S_\varepsilon$ such that $ux = x$. It follows that $X_\varepsilon$ is a unital left $S_\varepsilon$-set, and therefore an object in $\Moc{S_\varepsilon}$.

Next, let $\phi : X \to Y$ be a morphism in $\Gc{S}$, i.e., a function that commutes with the $S$-action and respects the $\Gamma$-grading. Then $\phi(X_\varepsilon) \subseteq Y_\varepsilon$, and so restricting $\phi$ to $X_\varepsilon$ gives a function $\phi|_{X_\varepsilon}: X_\varepsilon \to Y_\varepsilon$. Also, for all $s \in S_\varepsilon$ and $x \in X_\varepsilon$ we have 
\[\phi|_{X_\varepsilon}(sx) = \phi(sx) = s\phi(x) = s\phi|_{X_\varepsilon}(x),\] 
and so $\phi|_{X_\varepsilon}: X_\varepsilon \to Y_\varepsilon$ is a morphism in $\Moc{S_\varepsilon}$. 

Finally, it is immediate that $(-)_\varepsilon$ preserves all identity morphisms, and that $(\phi \circ \psi)|_{X_\varepsilon} = \phi |_{X_\varepsilon} \circ \psi|_{X_\varepsilon}$ for all composable morphisms $\phi$ and $\psi$ in $\Gc{S}$. Therefore $(-)_\varepsilon$ is a functor.

(2) As discussed in \S \ref{grfdrferfsa}, $S \otimes_{S_\varepsilon} X$ is a $\Gamma$-graded left $S$-set for each object $X$ in $\Moc{S_\varepsilon}$. Since $S$ has local units, it is easy to see that $S \otimes_{S_\varepsilon} X$ is a unital left $S$-set, and therefore an object of $\Gc{S}$.

Next, let $\phi : X \rightarrow Y$ be a morphism in $\Moc{S_\varepsilon}$. Then the usual considerations about tensors (see \S \ref{grfdrferfsa}), along with the fact that $\phi$ is an $S_\varepsilon$-map, imply that $1_S  \otimes \phi : S \otimes_{S_\varepsilon} X \rightarrow S \otimes_{S_\varepsilon} Y$ is well-defined. Also, for all $s\otimes x \in S \otimes_{S_\varepsilon} X$ and $t \in S$ we have
\[(1_S  \otimes \phi)(t(s\otimes x)) = (1_S  \otimes \phi)(ts\otimes x) = ts \otimes \phi(x)  = t(s \otimes \phi(x)) = t (1_S  \otimes \phi)(s\otimes x),\]
and 
\[\deg(s\otimes x) = \deg(s)\deg(x) = \deg(s) = \deg(s \otimes \phi(x)) = \deg((1_S  \otimes \phi)(s\otimes x)).\]
Thus $1_S  \otimes \phi$ is a morphism in $\Gc{S}$.

Finally, it is easy to see that $S \otimes_{S_\varepsilon} -$ preserves all identity morphisms, and that \[1_S  \otimes (\phi \circ \psi) = (1_S  \otimes \phi) \circ (1_S  \otimes\psi)\] for all composable morphisms $\phi$ and $\psi$ in $\Moc{S_\varepsilon}$. Therefore $S \otimes_{S_\varepsilon} -$ is a functor.
\end{proof}

\begin{lemma} \label{sixmonthd}
Let $S$ be a $\Gamma$-graded inverse semigroup, $T$ a subsemigroup of $S$ such that $S_\varepsilon \subseteq T$, $Y$ a $\Gamma$-graded left $S$-set, and $X \subseteq Y_\varepsilon$. Then the function $\phi : S \otimes_T X \rightarrow SX$, defined by  $s\otimes x \mapsto sx$, is a graded bijective $S$-map.
\end{lemma}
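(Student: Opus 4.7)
The plan is to verify, in order, that $\phi$ is well-defined, an $S$-map, graded, surjective, and injective; the last is the only substantive step. Well-definedness holds because the assignment $(s,x) \mapsto sx \in SX$ respects the identification $(st,x) \sim (s,tx)$ underlying $S \otimes_T X$, since $(st)x = s(tx)$. That $\phi$ is an $S$-map and surjective is immediate from the definitions of the left $S$-action on $S \otimes_T X$ and of $SX$. For gradedness, a nonzero simple tensor $s \otimes x$ lies in $(S\otimes_T X)_\alpha$ iff $\deg(s)\deg(x)=\alpha$ by~(\ref{tensorgrade}), and $x \in X \subseteq Y_\varepsilon$ forces $\deg(x)=\varepsilon$, so $\deg(\phi(s\otimes x))=\deg(sx)=\deg(s)=\alpha$.

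For injectivity, suppose $s_1 x_1 = s_2 x_2 \neq 0$; I aim to show $s_1 \otimes x_1 = s_2 \otimes x_2$ in $S \otimes_T X$. Comparing degrees in $Y$ gives $\deg(s_1) = \deg(s_2)$, so $s_1^{-1} s_2$, $s_2^{-1} s_1$, and all the idempotents $s_i^{-1} s_i$ and $s_i s_i^{-1}$ lie in $S_\varepsilon \subseteq T$. Using $s_1 s_1^{-1} s_1 = s_1$ together with $s_1^{-1}s_1 x_1 = s_1^{-1}(s_2 x_2)$, and moving the $T$-factors across $\otimes$,
\[ s_1 \otimes x_1 \;=\; s_1 \otimes s_1^{-1} s_1 x_1 \;=\; s_1 \otimes s_1^{-1} s_2 x_2 \;=\; s_1 s_1^{-1} s_2 \otimes x_2. \]

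To finish, the commutation of idempotents in $E(S)$ gives
\[ s_1 s_1^{-1} s_2 \;=\; s_1 s_1^{-1} s_2 s_2^{-1} s_2 \;=\; s_2 s_2^{-1} s_1 s_1^{-1} s_2 \;=\; s_2 \cdot (s_2^{-1} s_1 s_1^{-1} s_2), \]
with the parenthesised factor in $S_\varepsilon \subseteq T$. Moving it across $\otimes$ and simplifying via $s_1 s_1^{-1} s_2 x_2 = s_1 s_1^{-1} s_1 x_1 = s_2 x_2$ (so $s_2^{-1} s_1 s_1^{-1} s_2 x_2 = s_2^{-1} s_2 x_2$) yields
\[ s_1 s_1^{-1} s_2 \otimes x_2 \;=\; s_2 \otimes s_2^{-1} s_2 x_2 \;=\; s_2 (s_2^{-1} s_2) \otimes x_2 \;=\; s_2 \otimes x_2, \]
as required. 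The main obstacle is sequencing these rewrites so that every element moved across $\otimes$ lies in $T$ and every intermediate element of $X$ remains in $X$; the hypothesis $S_\varepsilon \subseteq T$, the commutation of $E(S)$, and the inverse-semigroup identity $ss^{-1}s = s$ are exactly what make this possible.
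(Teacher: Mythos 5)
Your argument is correct and follows essentially the same route as the paper's: use $X \subseteq Y_\varepsilon$ and a degree comparison to see that every factor shuttled across $\otimes$ lies in $S_\varepsilon \subseteq T$, then apply $ss^{-1}s=s$ and commutativity of idempotents to rewrite $s_1\otimes x_1$ into $s_2\otimes x_2$. The only omission is the degenerate case $s_1x_1 = s_2x_2 = 0$, which the paper disposes of by the same kind of move, $s_i\otimes x_i = s_i\otimes s_i^{-1}s_ix_i = s_i\otimes 0 = 0\otimes 0$, so that both tensors coincide with the zero element.
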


\begin{proof}
It is easy to see that $\phi$ is well-defined, surjective, graded (using (\ref{tensorgrade})), and an $S$-map. To show that it is injective, suppose that $\phi(s\otimes x)=\phi(t\otimes y)$ for some $s\otimes x, t\otimes y \in S \otimes_T X$. Then $sx=ty$, and so $ty=ss^{-1}ty$. Next, note that if $sx = 0$, then $s\otimes x = s \otimes s^{-1}sx = 0 \otimes 0$, and likewise $t\otimes y = 0\otimes 0$. So we may assume that $sx = ty$ is nonzero. Since $X \subseteq Y_\varepsilon$, and $\deg(sx)=\deg(ty)$, we have $\deg(s) = \deg(t)$. In particular, $s^{-1}tt^{-1}t \in S_\varepsilon \subseteq T$. Thus, using the fact that idempotents commute in any inverse semigroup \cite[Theorem 5.1.1]{howie}, we have
\begin{multline*}
s\otimes x= ss^{-1}s\otimes x= s\otimes s^{-1}s x=s\otimes s^{-1}ty= s\otimes s^{-1}tt^{-1}ty=s s^{-1}tt^{-1}t\otimes y \\ =tt^{-1}ss^{-1}t\otimes y=t\otimes t^{-1}ss^{-1}ty=t\otimes t^{-1}ty=tt^{-1}t\otimes y=t\otimes y.
\end{multline*}
Hence $\phi$ is injective.
\end{proof}

\begin{lemma} \label{natranstlemm}
Let $S$ be a $\Gamma$-graded inverse semigroup, and let $(-)_\varepsilon$ and $S \otimes_{S_\varepsilon} - $ be as in Lemma \ref{funct-lemm}. For each object $Y$ in $\Moc{S_\varepsilon}$ define a function
\begin{align}\label{hgy4nd1}
\mu_Y : (S\otimes_{S_\varepsilon} Y)_\varepsilon  & \longrightarrow Y\\
s\otimes y &\longmapsto sy,  \notag
\end{align}
and for each object $X$ in $\Gc{S}$ define a function 
\begin{align}\label{hgy4nd}
\nu_X : S \otimes_{S_\varepsilon} X_\varepsilon & \longrightarrow X,\\
s\otimes x &\longmapsto sx  \notag.
\end{align}
Then $\mu : (S \otimes_{S_\varepsilon} -)_{\varepsilon} \rightarrow 1_{\Moc{S_\varepsilon}}$ and $\nu : S \otimes_{S_\varepsilon} ( -)_{\varepsilon}\rightarrow 1_{\Gc{S}}$ are natural transformations, and each $\mu_Y$ is an isomorphism in $\Moc{S_\varepsilon}$.
\end{lemma}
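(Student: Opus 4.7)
My plan is to check three things: (a) each $\mu_Y$ is a well-defined bijective $S_\varepsilon$-map, (b) each $\nu_X$ is a graded $S$-map, and (c) both $\mu$ and $\nu$ are natural in their arguments. For (a), well-definedness of $\mu_Y$ follows from the universal property of the tensor product; note that by (\ref{tensorgrade}) and the trivial grading assumed on $Y\in\Moc{S_\varepsilon}$, every nonzero element of $(S\otimes_{S_\varepsilon} Y)_\varepsilon$ has the form $s\otimes y$ with $s\in S_\varepsilon$, so the target of the map does lie in $Y$. $S_\varepsilon$-equivariance is immediate from $\mu_Y(r(s\otimes y))=rsy=r\mu_Y(s\otimes y)$. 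Surjectivity uses unitality of $Y$ together with the local units of $S_\varepsilon$ (inherited from $S$, since $E(S)=E(S_\varepsilon)$): for $y\in Y\setminus\{0_Y\}$, pick $u\in E(S_\varepsilon)$ with $uy=y$, so that $u\otimes y\in (S\otimes_{S_\varepsilon} Y)_\varepsilon$ and $\mu_Y(u\otimes y)=y$.

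The substantive step is injectivity of $\mu_Y$, for which I would adapt the tensor manipulation in the proof of Lemma~\ref{sixmonthd}. Suppose $s\otimes y, s'\otimes y'\in (S\otimes_{S_\varepsilon} Y)_\varepsilon$ with $sy=s'y'$; so $s,s'\in S_\varepsilon$. If $sy=0$, then, since $s^{-1}s\in S_\varepsilon$, $s\otimes y=ss^{-1}s\otimes y=s\otimes s^{-1}sy=s\otimes 0=0\otimes 0$, and similarly $s'\otimes y'=0\otimes 0$. Otherwise, inserting $s=ss^{-1}s$ and $s'=s's'^{-1}s'$ into the tensors and shuttling the idempotents $ss^{-1}$ and $s's'^{-1}$ back and forth across the tensor symbol (using that they lie in $S_\varepsilon$ and commute because $S_\varepsilon$ is an inverse semigroup), one obtains the chain
\begin{equation*}
s\otimes y = s\otimes s^{-1}s'y' = ss^{-1}s's'^{-1}s'\otimes y' = s's'^{-1}ss^{-1}s'\otimes y' = s'\otimes s'^{-1}s'y' = s'\otimes y',
\end{equation*}
exactly mirroring the argument in Lemma~\ref{sixmonthd} (with the role of the degree computations simplified by the fact that all elements involved already lie in $S_\varepsilon$).

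For $\nu_X$: well-definedness comes from the universal property of tensor products, $S$-equivariance from $\nu_X(t(s\otimes x))=tsx=t\nu_X(s\otimes x)$, and the grading check $\deg(s\otimes x)=\deg(s)\deg(x)=\deg(s)=\deg(sx)$ (since $x\in X_\varepsilon$) shows $\nu_X$ is graded. Naturality of $\mu$ and $\nu$ is then a routine diagram chase: for $\phi:Y\to Y'$ in $\Moc{S_\varepsilon}$, one has $\mu_{Y'}((1_S\otimes\phi)_\varepsilon(s\otimes y))=s\phi(y)=\phi(sy)=\phi(\mu_Y(s\otimes y))$, and analogously $\nu_{X'}((1_S\otimes\phi|_{X_\varepsilon})(s\otimes x))=s\phi(x)=\phi(\nu_X(s\otimes x))$ for $\phi:X\to X'$ in $\Gc{S}$. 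The main obstacle is the tensor shuffle establishing injectivity of $\mu_Y$; it depends crucially on $S$ being an inverse semigroup (for the existence of $s^{-1}$ and commutativity of idempotents), and not merely on local units.
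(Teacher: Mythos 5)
Your proposal is correct and takes essentially the same approach as the paper: where you inline the tensor-shuffling injectivity argument (inserting $ss^{-1}$ and $s's'^{-1}$ and commuting idempotents across $\otimes_{S_\varepsilon}$), the paper simply invokes Lemma~\ref{sixmonthd} after observing that $(S\otimes_{S_\varepsilon}Y)_\varepsilon = S_\varepsilon\otimes_{S_\varepsilon}Y$ and that unitality gives $S_\varepsilon Y = Y$ (which also yields surjectivity directly, without your detour through local units). The remaining checks on $\nu_X$ and the naturality diagram chases match the paper's.
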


\begin{proof}
Since, as stipulated in Lemma~\ref{funct-lemm}, each object $Y$ of $\Moc{S_\varepsilon}$ is given the trivial grading in the construction $S\otimes_{S_\varepsilon} Y$, we see from (\ref{tensorgrade}) that  $(S\otimes_{S_\varepsilon} Y)_\varepsilon = S_\varepsilon \otimes_{S_\varepsilon} Y$. Moreover, since $Y$ is a unital left $S_\varepsilon$-set, we have $S_\varepsilon Y=Y$. Thus, by Lemma \ref{sixmonthd} (taking $X=Y$ and $T=S_\varepsilon$), $\mu_Y$ is a bijective $S_\varepsilon$-map for each $Y$, and therefore is an isomorphism in $\Moc{S_\varepsilon}$. Likewise, for each object $X$ in $\Gc{S}$, mapping $s\otimes x \mapsto sx$ gives a  graded bijective $S$-map $S \otimes_{S_\varepsilon} X_\varepsilon \rightarrow SX_\varepsilon \subseteq X$, by Lemma \ref{sixmonthd} (taking $T=S_\varepsilon$). Thus, each $\nu_X$ is a morphism in $\Gc{S}$.

Now let $\phi : X \to Y$ be an arbitrary morphism in $\Gc{S}$. Then for all $s\otimes x \in S \otimes_{S_\varepsilon} X_\varepsilon$, we have
\[\phi\nu_X(s\otimes x) = \phi(sx) = s\phi(x) = \nu_Y(s\otimes \phi(x)) = \nu_Y(1_S\otimes \phi |_{X_\varepsilon})(s\otimes x),\]
from which it follows that the diagram 
\[\xymatrix{
S \otimes_{S_\varepsilon} X_\varepsilon \ar[rr]^-{1_S\otimes \phi |_{X_\varepsilon}} \ar[d]_{\nu_X} & & S \otimes_{S_\varepsilon} Y_\varepsilon \ar[d]^{\nu_Y} \\
X \ar[rr]^-{\phi} & & Y}\]
commutes. Hence $\nu : S \otimes_{S_\varepsilon} ( -)_{\varepsilon}\rightarrow 1_{\Gc{S}}$ is a natural transformation. A nearly identical computation shows that $\mu : (S \otimes_{S_\varepsilon} -)_{\varepsilon} \rightarrow 1_{\Moc{S_\varepsilon}}$ is a natural transformation.
\end{proof}

We are ready for the main result of this section, which amounts to saying that $S$ is strongly graded if and only if each $\nu_X$ in (\ref{hgy4nd}) is an isomorphism. 

\begin{thm} \label{dadesthm} 
Let $S$ be a $\Gamma$-graded inverse semigroup. Then $S$ is strongly graded if and only if the categories $\Gc{S}$ and $\Moc{S_\varepsilon}$ are equivalent via the functors defined in Lemma \ref{funct-lemm} and natural transformations defined in Lemma \ref{natranstlemm}.
\end{thm}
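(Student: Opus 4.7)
The plan is to reduce everything to the question of whether the natural transformation $\nu$ of Lemma~\ref{natranstlemm} is a natural isomorphism, since $\mu$ already is one unconditionally. Every inverse semigroup has local units (via $ss^{-1}$ and $s^{-1}s$), so all the preparatory lemmas apply. Combining Lemma~\ref{sixmonthd} (with $T = S_\varepsilon$) with the definition of $\nu_X$, the map $\nu_X : S \otimes_{S_\varepsilon} X_\varepsilon \to SX_\varepsilon$ is always a graded bijective $S$-map, no matter what $S$ is. Hence $\nu_X$ is an isomorphism in $\Gc{S}$ if and only if $SX_\varepsilon = X$. So the equivalence of categories (with the prescribed functors and natural transformations) boils down to whether $SX_\varepsilon = X$ for every object $X$ of $\Gc{S}$.

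For the forward implication, assume $S$ is strongly graded. Then Lemma~\ref{strongly-gr-lemma}(3) applied with $\alpha = \varepsilon$ gives $SX_\varepsilon = X$ for every unital $\Gamma$-graded left $S$-set $X$. Therefore each $\nu_X$ is surjective, and hence an isomorphism by the preceding observation. Together with the fact that each $\mu_Y$ is an isomorphism (Lemma~\ref{natranstlemm}), this shows that $(-)_\varepsilon$ and $S \otimes_{S_\varepsilon} -$ are mutually inverse equivalences up to the natural isomorphisms $\mu$ and $\nu$.

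For the converse, assume the prescribed functors furnish an equivalence. Then in particular $\nu_X$ is an isomorphism for every object $X$ of $\Gc{S}$, so $SX_\varepsilon = X$ for every such $X$. To promote this to all degrees, the key trick is to invoke the shift functor: for each $\alpha \in \Gamma$, the shifted object $X(\alpha)$ is again in $\Gc{S}$, and by the definition in~(\ref{butyestergdtgf}) we have $X(\alpha)_\varepsilon = X_\alpha$ while the underlying set and $S$-action of $X(\alpha)$ coincide with those of $X$. Applying the identity $SX(\alpha)_\varepsilon = X(\alpha)$ therefore yields $S X_\alpha = X$ for every $\alpha \in \Gamma$ and every unital $\Gamma$-graded left $S$-set $X$. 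This is precisely condition~(3) of Lemma~\ref{strongly-gr-lemma}, so $S$ is strongly graded.

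The only delicate point is in the converse: one has to notice that even though the hypothesis only says $\nu_X$ is an isomorphism, the availability of the shift functor forces the surjectivity statement at \emph{every} degree, which then feeds directly into Lemma~\ref{strongly-gr-lemma}. Everything else is bookkeeping on top of Lemmas~\ref{sixmonthd}, \ref{natranstlemm}, and~\ref{strongly-gr-lemma}.
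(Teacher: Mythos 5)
Your proposal is correct and follows essentially the same route as the paper's own proof: reduce the equivalence to the question of when each $\nu_X$ is an isomorphism via Lemma~\ref{sixmonthd}, use Lemma~\ref{strongly-gr-lemma} to get $SX_\varepsilon = X$ in the forward direction, and apply the shift $X(\alpha)$ (with $X(\alpha)_\varepsilon = X_\alpha$) in the converse to recover condition (3) of Lemma~\ref{strongly-gr-lemma}. No gaps.
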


\begin{proof}
Suppose that $S$ is strongly graded. By Lemma~\ref{natranstlemm}, to conclude that $\Gc{S}$ and $\Moc{S_\varepsilon}$ are equivalent, it suffices to show that, for each object $X$ in $\Gc{S}$, the morphism $\nu_X$ in (\ref{hgy4nd}) is an isomorphism. But by Lemma \ref{sixmonthd}, $\nu_X$ is an isomorphism, when viewed as a function $S \otimes_{S_\varepsilon} X_\varepsilon \rightarrow SX_\varepsilon$, and $S X_\varepsilon=X$, by Lemma~\ref{strongly-gr-lemma}.

Conversely, suppose that $\Gc{S}$ and $\Moc{S_\varepsilon}$ are equivalent via the relevant functors and natural transformations. Then for any object $X$ in $\Gc{S}$ and any $\alpha \in \Gamma$ we have an isomorphism
\begin{align*}
\nu_{X(\alpha)}: S \otimes_{S_\varepsilon} X(\alpha)_\varepsilon & \longrightarrow X(\alpha) \\
 s\otimes x &\longmapsto sx
\end{align*}
(see~(\ref{butyestergdtgf})). This implies that $SX_\alpha=X$ for all $\alpha \in \Gamma$, and therefore $S$ is strongly graded, by Lemma~\ref{strongly-gr-lemma}.   
\end{proof}

\section{Graded groupoids and inverse semigroups} \label{groupoidsection}

There has been recent interest~\cite{arasims,chr19} in systematically studying graded groupoids, as a result of their association with groupoid algebras. As mentioned in the Introduction (see (\ref{gffhdkcnejfksh})), there is a connection between groupoids of germs and inverse semigroups, as building blocks for combinatorial algebras. In this section we show that there is a tight relationship between the gradings on an inverse semigroup and those on the corresponding groupoid of germs. We also explore gradings on a class of inverse semigroups arising from ample groupoids. We begin with a brief review of groupoids and the associated notation.

\subsection{Groupoids} \label{groupoidsubsection}

A \emph{groupoid} is a small category in which every morphism is invertible. It can also be viewed as a generalisation of ``group", where multiplication is partially defined. 

Let $\mathscr{G}$ be a groupoid. We denote the set of objects of $\mathscr{G}$, also known as the \emph{unit space} of $\mathscr{G}$, by $\mathscr{G}^{(0)}$, and we identify these objects with the corresponding identity morphisms. For each morphism $x$ in $\mathscr{G}$, the object $\domr(x):=x^{-1}x$ is the \emph{domain} of $x$, and $\ran(x):=xx^{-1}$ is its \emph{range}. Thus, two morphisms $x$ and $y$ are composable as $xy$ if and only if $\domr(x)=\ran(y)$. Let 
\[\mathscr{G}^{(2)} := \{(x,y) \in \mathscr{G} \times \mathscr{G} \mid \domr(x) = \ran(y)\}\]
denote the set of composable pairs of morphisms of $\mathscr{G}$. For subsets $X,Y\subseteq \mathscr{G}$ of morphisms, we define
\begin{equation}\label{pofgtryhf}
XY=\big \{xy \mid x \in X, y \in Y, \text{ and } \domr(x)=\ran(y) \big\},
\end{equation}
and
\begin{equation}\label{pofgtryhf2}
X^{-1}=\big \{x^{-1} \mid x \in X \big \}.
\end{equation}

If $\mathscr{G}$ is a groupoid and $\Gamma$ is a group, then $\mathscr{G}$ is a $\Gamma$-\emph{graded groupoid} if there is functor $\phi:\mathscr{G} \rightarrow \Gamma$ (which can be viewed as a function from the set of all morphisms of $\mathscr{G}$ to $\Gamma$, such that $\phi(x)\phi(y) = \phi(xy)$ for all $(x,y) \in \mathscr{G}^{(2)}$). Setting $\mathscr{G}_\alpha:=\phi^{-1}(\alpha)$ for each $\alpha \in \Gamma$, we have
\[ \mathscr{G}= \bigcup_{\alpha \in \G} \mathscr{G}_\alpha, \]
where $\mathscr{G}_\alpha \mathscr{G}_\beta \subseteq \mathscr{G}_{\alpha\beta}$ for all $\alpha,\beta \in \Gamma$, and $\mathscr{G}_\alpha \cap \mathscr{G}_\beta = \emptyset$ for all distinct $\alpha,\beta \in \Gamma$. Note that $\mathscr{G}^{(0)} \subseteq \mathscr{G}_\varepsilon$.  We say that $\mathscr{G}$ is \emph{strongly graded} if $\mathscr{G}_\alpha \mathscr{G}_\beta =\mathscr{G}_{\alpha\beta}$ for all $\alpha,\beta \in \Gamma$. Analogously to Proposition~\ref{hgfgftgr}, it is shown in \cite[Lemma 3.1]{chr19} that for a $\Gamma$-graded groupoid $\mathscr{G}$ the following are equivalent.
\begin{enumerate}
\item $\mathscr{G}$ is strongly graded;
		
\smallskip

\item $\mathscr{G}_\alpha \mathscr{G}_{\alpha^{-1}}=\mathscr{G}_\varepsilon$ for every $\alpha \in \Gamma$;
		
\smallskip

\item $\domr(\mathscr{G}_\alpha)=\mathscr{G}^{(0)}$ for every $\alpha \in \Gamma$;
		
\smallskip

\item  $\ran(\mathscr{G}_\alpha)=\mathscr{G}^{(0)}$ for every $\alpha \in \Gamma$.
\end{enumerate}
 
\subsection{Groupoids of germs} 

Let $X$ be a nonempty set, and let $S$ be an inverse semigroup. We say that there is a \emph{partial action of $S$ on $X$} if there is a semigroup homomorphism $S\rightarrow \mathcal{I}(X)$ that preserves zero, where $\mathcal{I}(X)$ denotes the symmetric inverse monoid, discussed in \S \ref{vagnersect}. (A more common name for this notion in the literature is \emph{action}, however, we append ``partial" to avoid confusion with the concept described in \S \ref{grfdrferfsa}. See \cite{GH} for a discussion of other uses of ``partial action" in connection with semigroups.) We denote the image of $s$ under such a homomorphism by $\theta_s$, and set $X_s := \dom(\theta_s)$. In particular, $\theta_0$ is the empty function. We say that the partial action of $S$ on $X$ is \emph{non-degenerate} if $X=\bigcup_{e\in E(S)} X_e$. Using the fact that $ss^{-1}s=s$, one can show that $X_s=X_{s^{-1}s}$ and $\im(\theta_s) = X_{ss^{-1}}$, for all $s \in S$. 

Now let
\begin{equation*}
\mathscr{G}:=\bigcup_{s\in S} (\{s\}\times X_{s}) \subseteq S\times X,
\end{equation*}
and define a binary relation $\sim$ on $\mathscr{G}$ by letting $(s,x)\sim (t,y)$ whenever $x=y$, and there exists $e\in E(S)$ such that  $x\in X_e$ and $se=te$. It is easy to see that $\sim$ is an equivalence relation. We denote the equivalence class of $(s,x) \in \mathscr{G}$ by $[s,x]$, and call it the \emph{germ of $s$ at $x$}. Also, let $S\ltimes X := \mathscr{G}/ \sim$, and let \[(S\ltimes X)^{(0)} := \{[e,x] \mid e \in E(S), x \in X_e\}.\] Assuming that the partial action of $S$ on $X$ is non-degenerate, we identify the latter set with $X$, via the mapping $[e,x] \mapsto x$. For each $[s,x] \in S\ltimes X$ let $\domr([s,x]) = x$ and $\ran([s,x]) = \theta_s(x)$. Finally, for all $[s,x], [t,y] \in S\ltimes X$ let $[s,x]\inv = [s^{-1},sx]$, and let $[s,x][t,y]=[st,y]$, in case $\domr([s,x]) = \ran([t,y])$. It is routine to show that these operations are well-defined, making $S\ltimes X$ a groupoid, called the \emph{groupoid of germs}. (See \cite[\S 4]{exel20081} or \cite[p.\ 140]{paterson1} for more details.) If $S$ is $\Gamma$-graded, then mapping 
\begin{equation}\label{balconnn}
[s,x]\mapsto \deg(s),
\end{equation} 
which is easily seen to be well-defined, induces a grading $S\ltimes X \rightarrow \Gamma$. 

\begin{prop}\label{sthfyhry}
Let $S$ be a $\G$-graded inverse semigroup equipped with a non-degenerate partial action on a nonempty set $X$. If $S$ is strongly graded, then the groupoid of germs $S\ltimes X$ is strongly graded in the induced $\Gamma$-grading. 
\end{prop}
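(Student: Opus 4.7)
The plan is to invoke the characterisation of strongly graded groupoids recalled just after (\ref{pofgtryhf2}), which is \cite[Lemma 3.1]{chr19}. It will suffice to verify condition (3) listed there, namely $\domr(\mathscr{G}_\alpha) = \mathscr{G}^{(0)}$ for every $\alpha \in \Gamma$, where $\mathscr{G} = S \ltimes X$ is equipped with the grading from (\ref{balconnn}). Under the identification of $\mathscr{G}^{(0)}$ with $X$, this reduces to the following concrete task: given $x \in X$ and $\alpha \in \Gamma$, produce $s \in S_\alpha$ with $x \in X_s$, since then $[s,x] \in \mathscr{G}_\alpha$ and $\domr([s,x]) = x$.

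To produce such an $s$, I would first apply the non-degeneracy hypothesis to choose $e \in E(S)$ with $x \in X_e$. Since an inverse semigroup is regular and hence has local units, the strongly graded semigroup $S$ satisfies the equivalent conditions of Proposition~\ref{hgfgftgr}. In particular, Proposition~\ref{hgfgftgr}(5) gives $E(S) = \{s^{-1}s \mid s \in S_\alpha\}$, so there exists $s \in S_\alpha$ with $s^{-1}s = e$. Using the identity $X_s = X_{s^{-1}s}$ recorded at the start of \S\ref{groupoidsection}, I get $x \in X_e = X_{s^{-1}s} = X_s$, which is exactly what is needed. Applying this for every $\alpha \in \Gamma$ then gives $\domr(\mathscr{G}_\alpha) = \mathscr{G}^{(0)}$ and hence the claim.

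The only point requiring care is the left/right bookkeeping: $X_s$ denotes the domain of $\theta_s$, and this equals $X_{s^{-1}s}$, not $X_{ss^{-1}} = \im(\theta_s)$. Consequently, one must write $e$ in the form $s^{-1}s$ with $s \in S_\alpha$, which is Proposition~\ref{hgfgftgr}(5), rather than in the form $ss^{-1}$, which would come from Proposition~\ref{hgfgftgr}(4) and would instead yield a germ with range equal to $x$. Apart from this observation, the argument is a direct translation of the strongly graded property of $S$ through the construction of the groupoid of germs, and no further computation should be needed.
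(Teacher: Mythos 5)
Your argument is correct and is essentially the same as the paper's: both verify condition (3) of the characterisation in \cite[Lemma 3.1]{chr19} by combining non-degeneracy with Proposition~\ref{hgfgftgr}(5) and the identity $X_s = X_{s^{-1}s}$ to show $\domr((S\ltimes X)_\alpha) = \bigcup_{e \in E(S)} X_e = X$. Your remark about needing $e = s^{-1}s$ rather than $e = ss^{-1}$ is exactly the bookkeeping the paper's computation relies on.
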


\begin{proof}
For each $\alpha \in \Gamma$, we have
\[(S\ltimes X)_\alpha=\{[s,x] \in S\ltimes X \mid s\in S_\alpha \}.\]
Since $S$ is strongly graded, by Proposition~\ref{hgfgftgr}, we have $E(S) = \{s^{-1}s \mid s \in S_\alpha\}$, and hence
\[\domr((S\ltimes X)_\alpha) =\bigcup_{s\in S_\alpha} X_{s^{-1}s}=\bigcup_{e\in E(S)}X_e=X=(S\ltimes X)^{(0)}.\] 
Thus $S\ltimes X$ is strongly graded, by~\cite[Lemma 3.1]{chr19} (see \S \ref{groupoidsubsection}). 
\end{proof}

It is natural to ask whether the converse of Proposition~\ref{sthfyhry} holds. In \S \ref{hfgfhdhksdkkls} we investigate gradings on graph inverse semigroups $\SS(E)$, and, in particular, we show in Corollary~\ref{source-cor} that $\SS(E)$ cannot be strongly $\mathbb{Z}$-graded if the graph $E$ has a source vertex. On the other hand, it is known that for any finite graph $E$ with no sinks, the graph groupoid $\mathscr{G}_E$ (see~\cite[\S 4.1.1]{chr19}) is strongly $\mathbb{Z}$-graded (see Corollary~\ref{hfghfyhff88}). Since $\mathscr{G}_E$ is the universal groupoid of $\SS(E)$ (i.e., the groupoid of germs of a certain partial action of $\SS(E)$, see~\cite[Definition 5.14]{st}), it follows that the converse of Proposition~\ref{sthfyhry} does not hold in general.

\subsection{Topological groupoids} \label{topgroupoid}

A \textit{topological groupoid} is a groupoid (whose set of morphisms is) equipped with a topology making inversion and composition continuous. A topological groupoid $\mathscr{G}$ is an \textit{\'etale groupoid} if $\mathscr{G}^{(0)}$ is locally compact and Hausdorff in the topology induced by that on $\mathscr{G}$, and $\domr: \mathscr{G} \to \mathscr{G}^{(0)}$ is a local homeomorphism. An open subset $X$ of an \'etale groupoid $\mathscr{G}$ is called a \emph{slice} or \emph{local bisection} if the restrictions $\domr|_{X}$ and $\ran|_{X}$ are injective (and hence are homeomorphisms onto their images). The collection of all slices forms a base for the topology of an \'etale groupoid $\mathscr{G}$ \cite[Proposition 3.5]{exel20081}, and $\mathscr{G}^{(0)}$ is a slice \cite[Proposition 3.2]{exel20081}. An \'etale groupoid is \emph{ample} if the compact slices form a base for its topology.

Let $\mathscr{G}$ be an ample groupoid, and set
\[\mathscr{G}^a=\{X\mid X \text{~is a compact slice of~} \mathscr{G}\}.\]
Then $\mathscr{G}^{a}$, sometimes denoted $\mathcal{S}(\mathscr{G})$ \cite{exel20081} or $\mathscr{G}^{co}$ \cite{paterson1}, is an inverse semigroup under the operations given in (\ref{pofgtryhf}) and (\ref{pofgtryhf2}) \cite[Proposition 2.2.4]{paterson1}, with $\emptyset$ as the zero element. Supposing that $\mathscr{G}$ is $\Gamma$-graded, we can build a graded version of $\mathscr{G}^{a}$. Specifically, we say that a slice $X$ of $\mathscr{G}$ is \emph{homogeneous} if $X \subseteq \mathscr{G}_\alpha$ for some $\alpha \in \Gamma$, and set
\begin{equation*}
\mathscr{G}^{h}=\{X\mid X \text{~is a homogeneous compact slice of~} \mathscr{G}\}.
\end{equation*}
Since $\mathscr{G}_{\alpha^{-1}} = \mathscr{G}_{\alpha}^{-1}$ for each $\alpha \in \Gamma$, we see that $\mathscr{G}^{h}$ is an inverse subsemigroup of $\mathscr{G}^a$. Moreover, it is easy to see that defining $\phi :\mathscr{G}^{h} \backslash \{\emptyset\} \rightarrow \Gamma$ by $\phi(X)= \alpha$, whenever $X\subseteq \mathscr{G}_\alpha$, turns $\mathscr{G}^{h}$ into a graded inverse semigroup.

The following proposition relates the gradings on $\mathscr{G}$ to those on the associated graded inverse semigroup $\mathscr{G}^h$. More specifically, $\mathscr{G}$ is strongly graded if and only if $\mathscr{G}^h$ satisfies a condition similar to ``locally strongly graded" (see Proposition~\ref{sussa}). 

\begin{prop}
Let $\mathscr{G}$ be an ample $\Gamma$-graded groupoid. Then $\mathscr{G}$ is strongly $\Gamma$-graded if and only if for all $\alpha, \beta \in \Gamma$, $X\in \mathscr{G}^h_{\alpha \beta} \backslash \{\emptyset\}$, and $u\in \domr(X)$, there is a compact open set $U\subseteq \mathscr{G}^{(0)}$ such that $u\in U$ and $XU \in \mathscr{G}^h_{\alpha} \mathscr{G}^h_{\beta}$. 
\end{prop}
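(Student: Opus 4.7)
The plan is to use the characterisation of strongly graded groupoids recalled just above, namely that $\mathscr{G}$ is strongly graded iff $\mathscr{G}_\alpha\mathscr{G}_\beta=\mathscr{G}_{\alpha\beta}$ for all $\alpha,\beta\in\Gamma$, and then to pass between ``pointwise'' decompositions of morphisms and ``slicewise'' decompositions of compact homogeneous slices, using that $\mathscr{G}$ is ample and that (with the standard convention) each $\mathscr{G}_\alpha$ is open, so that the compact homogeneous slices form a base for the topology of $\mathscr{G}$.

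For the forward implication, suppose $\mathscr{G}$ is strongly graded, and fix $\alpha,\beta\in\Gamma$, $X\in\mathscr{G}^h_{\alpha\beta}\setminus\{\emptyset\}$, and $u\in\domr(X)$. Pick $x\in X$ with $\domr(x)=u$. By strong grading, write $x=yz$ with $y\in\mathscr{G}_\alpha$ and $z\in\mathscr{G}_\beta$, where $\domr(z)=u$ and $\ran(z)=\domr(y)$. Using that compact homogeneous slices form a base, choose $Y\in\mathscr{G}^h_\alpha$ with $y\in Y$ and $Z\in\mathscr{G}^h_\beta$ with $z\in Z$. Set $U:=\domr(Z)$, which is a compact open subset of $\mathscr{G}^{(0)}$ (as the image of the compact slice $Z$ under the homeomorphism $\domr|_Z$), and note that $u\in U$. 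The key computation is that
\[XU=(XUZ^{-1})\,Z.\]
Indeed, for any $a\in XU$, the condition $\domr(a)\in U=\domr(Z)$ gives a unique $z'\in Z$ with $\domr(z')=\domr(a)$; then $a=(az'^{-1})z'$ with $az'^{-1}\in XUZ^{-1}$, showing $XU\subseteq(XUZ^{-1})Z$. The reverse containment follows because $Z^{-1}Z=\domr(Z)=U$, so $(XUZ^{-1})Z\subseteq XUU=XU$. Finally, $XUZ^{-1}\subseteq\mathscr{G}_{\alpha\beta}\mathscr{G}_\varepsilon\mathscr{G}_{\beta^{-1}}\subseteq\mathscr{G}_\alpha$, it is a compact slice (being a product of compact slices in $\mathscr{G}^a$, cf.\ \cite[Proposition 2.2.4]{paterson1}), so $XUZ^{-1}\in\mathscr{G}^h_\alpha$, and hence $XU\in\mathscr{G}^h_\alpha\mathscr{G}^h_\beta$ as required.

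For the converse, assume the slicewise condition and let $x\in\mathscr{G}_{\alpha\beta}$. Since $\mathscr{G}_{\alpha\beta}$ is open and $\mathscr{G}$ is ample, choose $X\in\mathscr{G}^h_{\alpha\beta}$ with $x\in X$. Applying the hypothesis to $u:=\domr(x)\in\domr(X)$ produces a compact open $U\subseteq\mathscr{G}^{(0)}$ with $u\in U$ and $XU=Y'Z'$ for some $Y'\in\mathscr{G}^h_\alpha$ and $Z'\in\mathscr{G}^h_\beta$. Since $\domr(x)=u\in U$, we have $x=x\cdot u\in XU$, so $x=y'z'$ with $y'\in Y'\subseteq\mathscr{G}_\alpha$ and $z'\in Z'\subseteq\mathscr{G}_\beta$. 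This yields $\mathscr{G}_{\alpha\beta}\subseteq\mathscr{G}_\alpha\mathscr{G}_\beta$, so $\mathscr{G}$ is strongly graded.

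The main obstacle is the forward direction: starting only from the pointwise decomposition $x=yz$ available from strong grading, one must produce a single compact open $U\subseteq\mathscr{G}^{(0)}$ (independent of other decompositions of elements of $X$) such that $XU$ factors globally as a product of homogeneous compact slices. The trick is to take $U=\domr(Z)$ for an auxiliary slice $Z\in\mathscr{G}^h_\beta$ around $z$, and to recognise that the factor on the $\alpha$-side can be recovered as $XUZ^{-1}$ using the involutive structure of $\mathscr{G}^a$; the identity $Z^{-1}Z=\domr(Z)=U$ is what makes the factorisation close up cleanly.
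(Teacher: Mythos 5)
Your proof is correct, and the converse direction coincides with the paper's. The forward direction, however, takes a genuinely different route. The paper proves it by first building the partial action $\pi \colon \mathscr{G}^h \to \mathcal{I}(\mathscr{G}^{(0)})$, invoking the graded groupoid isomorphism $\mathscr{G}^h \ltimes \mathscr{G}^{(0)} \cong \mathscr{G}$, $[X,u] \mapsto Xu$ (Exel's reconstruction result), and then extracting the compact open set $U$ from the germ equivalence $[X,u]=[YZ,u]$: equality of germs \emph{means} there is an idempotent $U$ with $u \in U$ and $XU = YZU$. You instead work entirely inside the inverse semigroup $\mathscr{G}^a$: after choosing an auxiliary slice $Z \in \mathscr{G}^h_\beta$ around the $\beta$-factor $z$, you take $U = \domr(Z) = Z^{-1}Z$ and observe that $XU = (XUZ^{-1})Z$ is just the identity $(AZ^{-1})Z = A(Z^{-1}Z) = AU$ for $A = XU$, with $XUZ^{-1} \in \mathscr{G}^h_\alpha$ by degree bookkeeping and closure of compact slices under products. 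This is more elementary and self-contained --- it avoids the groupoid of germs and the reconstruction isomorphism altogether --- at the cost of not exhibiting the statement as an instance of the general germ-groupoid picture, which is the conceptual point the paper's proof is making (the condition in the proposition is precisely what germ equivalence unwinds to). One small point you handled correctly but should keep explicit: choosing \emph{homogeneous} compact slices $Y, Z$ around $y, z$ requires each $\mathscr{G}_\gamma$ to be open, which is the standing convention for topologically graded groupoids and is used implicitly in the paper's proof as well.
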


\begin{proof}
We begin by defining a homomorphism $\pi: \mathscr{G}^h \rightarrow \mathcal{I}(\mathscr{G}^{(0)})$. Given $X\in \mathscr{G}^{h}$, since $X$ is a slice, we see that $XX^{-1} = \{xx^{-1} \mid x \in X\}$ and $X^{-1}X = \{x^{-1}x \mid x \in X\}$, and hence $XX^{-1}, X^{-1}X \subseteq \mathscr{G}^{(0)}$. Also, $Xx^{-1}x = \{x\}$ for all $x\in X$, and so we can define
\begin{align*}
\pi_X: X^{-1}X &\longrightarrow XX^{-1}\\
u &\longmapsto \ran(Xu).
\end{align*}
Then, clearly, $\pi_X \in \mathcal{I}(\mathscr{G}^{(0)})$. Also, it is easy to check that \[\pi_Y^{-1}(YY^{-1} \cap X^{-1}X) = Y^{-1}X^{-1}XY\] and $\pi_X \pi _Y =\pi _{XY}$, for all $X,Y \in \mathscr{G}^{h}$. Hence 
\begin{align*}
\pi:  \mathscr{G}^h &\longrightarrow \mathcal{I}(\mathscr{G}^{(0)})\\
X &\longmapsto \pi_X,
\end{align*}
is a homomorphism. Thus, there is a partial action of $\mathscr{G}^h$ on $\mathscr{G}^{(0)}$, and so we can construct the groupoid of germs $\mathscr{G}^h \ltimes \mathscr{G}^{(0)}$. Next, we note that according to~\cite[Proposition~5.4]{exel20081} and its proof, 
\begin{align*}
\phi:  \mathscr{G}^h \ltimes \mathscr{G}^{(0)} &\longrightarrow  \mathscr G\\
 [X,u]&\longmapsto Xu
\end{align*}
is a groupoid isomorphism. Moreover, $\phi$ respects the gradings on the above groupoids. For, letting $\alpha \in \Gamma$ and $[X,u] \in (\mathscr{G}^h \ltimes \mathscr{G}^{(0)})_\alpha$, we have $X \in \mathscr{G}^h_\alpha$, by (\ref{balconnn}), and hence $Xu \in \mathscr{G}_\alpha$. 

Now, suppose that $\mathscr{G}$ is strongly graded, and let $\alpha, \beta \in \Gamma$, $X\in \mathscr{G}^h_{\alpha \beta} \backslash \{\emptyset\}$, and $u\in \domr(X)$. Then $[X,u] \in (\mathscr{G}^h \ltimes \mathscr{G}^{(0)})_{\alpha \beta}$, and so $x:=Xu \in \mathscr{G}_{\alpha \beta}$. Hence, there exist $y\in \mathscr{G}_\alpha$ and $z\in \mathscr{G}_\beta$ such that $x=yz$. Since $\mathscr{G}$ is ample, we can choose compact slices $Y\in \mathscr{G}^h_\alpha$ and $Z \in \mathscr{G}^h_\beta$ such that $y\in Y$ and $z\in Z$. Since $X$ and $YZ$ are slices, we have $Xu=YZu$, and so $\phi([X,u]) = \phi([YZ,u])$. Since $\phi$ is injective, it follows that $[X,u]=[YZ,u]$. By the definition of the groupoid of germs, this means that there is a compact open set $U \subseteq \mathscr{G}^{(0)} (\subseteq \mathscr{G}_\varepsilon)$ such that $u\in U$ and $XU=YZU$. It follows that $XU = Y(ZU) \in \mathscr{G}^h_{\alpha} \mathscr{G}^h_{\beta}$. 

For the converse, suppose that for all $\alpha, \beta \in \Gamma$, $X\in \mathscr{G}^h_{\alpha \beta} \backslash \{\emptyset\}$, and $u\in \domr(X)$, there is a compact open set $U\subseteq \mathscr{G}^{(0)}$ such that $u\in U$ and $XU \in \mathscr{G}^h_{\alpha} \mathscr{G}^h_{\beta}$. Now let $\alpha, \beta \in \Gamma$ and $x \in \mathscr{G}_{\alpha \beta}$. Since $\mathscr{G}$ is ample, there exists $X\in \mathscr{G}^h_{\alpha\beta}\backslash \{\emptyset\}$ such that $x \in X$. Then, by hypothesis, we can find a compact open set $U\subseteq \mathscr{G}^{(0)}$ such that $\domr(x) \in U$ and $XU \in \mathscr{G}^h_{\alpha} \mathscr{G}^h_{\beta}$. It follows that $x=x(x^{-1}x) \in \mathscr{G}_{\alpha} \mathscr{G}_{\beta}$, and so $\mathscr{G}$ is strongly graded.
\end{proof}

\section{Semigroup rings}\label{semigroupringsec}

As mentioned before, our theory of graded semigroups was inspired by results about graded rings. Next we recall graded rings in more detail, as well as semigroup rings. We utilise these throughout the rest of the paper to draw closer connections between graded rings and graded semigroups.

Given a ring $A$ and group $\Gamma$, we say that $A$ is $\Gamma$-\emph{graded} if $A = \bigoplus_{\alpha \in \Gamma} A_\alpha$, where the $A_\alpha$ are additive subgroups of $A$, and $A_\alpha A_\beta \subseteq A_{\alpha \beta}$ for all $\alpha, \beta \in \Gamma$ (with $A_\alpha A_\beta$ denoting the set of all sums of elements of the form $ab$, for $a \in A_\alpha$ and $b \in A_\beta$). In this situation, $A$ is \emph{strongly} graded if $A_\alpha A_\beta = A_{\alpha \beta}$ for all $\alpha, \beta \in \Gamma$. Each $a \in A$ can be written uniquely as $a = \sum_{\alpha \in \Gamma} a_{\alpha}$, where $a_{\alpha} \in A_{\alpha}$ for each $\alpha \in \Gamma$, and all but finitely many of the $a_{\alpha}$ are zero. Here we refer to $a_{\alpha}$ as the \emph{homogeneous component of $a$ of degree $\alpha$}.

Next, given a ring $A$ and a semigroup $S$, we denote by $AS$ the corresponding semigroup ring, and by $A[S]$ the resulting \emph{contracted semigroup ring}, where the zero of $S$ is identified with the zero of $AS$. That is, $A[S] = AS/I$, where $I$ is the ideal of $AS$ generated by the zero of $S$. We denote an arbitrary element of $A[S]$ by $\sum_{s\in S} a^{(s)}s$ (or $\sum_{s\in S\setminus \{0\}} a^{(s)}s$), where $a^{(s)} \in A$, and all but finitely many of the $a^{(s)}$ are zero. 
 
When the semigroup $S$ is $\Gamma$-graded, one can naturally equip the ring $A[S]$ with a $\Gamma$-grading, producing a pleasant relationship between these two graded structures. Specifically, it is easy to check that defining 
\begin{equation} \label{smgpringgr}
A[S]_\alpha:= A[S_\alpha] = \Big\{\sum_{s\in S} a^{(s)}s \mid s \in S_\alpha \text{ whenever } a^{(s)}\neq 0\Big\},
\end{equation} 
for each $\alpha \in \Gamma$, turns $A[S]$ into a $\Gamma$-graded ring. We refer to this as the grading on $A[S]$ \emph{induced by the grading} of $S$. 

Similarly, if $A$ is a unital ring, $A[S]$ is $\Gamma$-graded, and $S=\bigcup_{\alpha \in \Gamma}  (S \cap A[S]_\alpha)$, then setting $S_\alpha:= S \cap A[S]_\alpha$, for each $\alpha \in \Gamma$, induces a grading on $S$. (Here we identify each $t \in S$ with $1t \in A[S]$, i.e., the element $\sum_{s\in S} a^{(s)}s$, where $a^{(t)}=1$, and the other coefficients are $0$.) We note that, generally speaking, $S \neq \bigcup_{\alpha \in \Gamma}  (S \cap A[S]_\alpha)$ for a $\Gamma$-grading on $A[S]$, as the following example shows. 

\begin{example}
Let $K$ be a field, and let $S=\big \{e_{ij}(k) \mid 1\leq i,j\leq 2, k\in K\big\}$ be the Rees matrix semigroup (with multiplication as in (\ref{stableprod})). It is easy to show that
$K[S]\cong \M_2 (K)$, the ring of $2 \times 2$ matrices with coefficients in $K$. Also, it is easy to see that defining 
\begin{align*}
K[S]_{0} = \left\{
\begin{pmatrix}
a & b-a \\ 0 & b
\end{pmatrix} \Big{|} \ a,b \in K \right\}
\mathrm{ \;\;\; and \;\;\; } 
K[S]_{1} = \left\{
\begin{pmatrix}
d & c \\d & -d
\end{pmatrix} \Big{|} \ c,d \in K \right\},
\end{align*}
gives a $\mathbb Z_2$-grading on $K[S]$. Then 
\begin{align*}
S \cap K[S]_{0} = \left\{
\begin{pmatrix}
0 & 0 \\ 0 & 0
\end{pmatrix} \right\} 
\mathrm{ \;\;\; and \;\;\; } 
S \cap K[S]_{1} = \left\{
\begin{pmatrix}
0 & c \\0 & 0
\end{pmatrix} \Big{|} \ c \in K \right\},
\end{align*}
which implies that the grading on $K[S]$ does not induce a grading on $S$. 
\end{example}

We will show that when the grading on $A[S]$ does induce a grading on $S$, the two objects share certain properties. Before stating the next result, let us review the smash product for rings, first introduced by Cohen and Montgomery~\cite{cohenmont}.

Let $A = \bigoplus_{\alpha \in \Gamma} A_\alpha$ be a $\Gamma$-graded ring, and let $A\#\Gamma$ denote the set of formal sums $\sum_{\alpha \in \Gamma} a^{(\alpha)} P_{\alpha}$, where each $a^{(\alpha)} \in A$, and all but finitely many of the $a^{(\alpha)}$ are zero. We define addition on $A\#\Gamma$ via 
\begin{equation}
\sum_{\alpha \in \Gamma} a^{(\alpha)} P_{\alpha} + \sum_{\alpha \in \Gamma} b^{(\alpha)} P_{\alpha} = \sum_{\alpha \in \Gamma} (a^{(\alpha)}+b^{(\alpha)}) P_{\alpha},
\end{equation}
and define multiplication by letting
\begin{equation}\label{smashmult2}
a^{(\alpha)} P_{\alpha}b^{(\beta)} P_{\beta} = a^{(\alpha)}b^{(\beta)}_{\alpha\beta^{-1}}P_{\beta}
\end{equation}
for all $a^{(\alpha)}, b^{(\beta)} \in A$ and $\alpha,\beta \in \Gamma$ (where $b^{(\beta)}_{\alpha\beta^{-1}}$ is the homogeneous component of $b^{(\beta)}$ of degree $\alpha\beta^{-1}$), and extending linearly. With these operations $A\#\Gamma$ becomes a ring, called the \emph{smash product of $A$ by $\Gamma$}. See \cite[\S 7.1]{grrings} for an alternative description of these rings.

\begin{prop}\label{hfhffhghggh}
Let $S$ be a $\Gamma$-graded semigroup and $A$ a unital ring. Also view $A[S]$ as a $\Gamma$-graded ring via the grading induced by that on $S$. Then the following hold.
\begin{enumerate}[\upshape(1)]

\item $S$ is a strongly $\Gamma$-graded semigroup if and only if $A[S]$ is a strongly $\Gamma$-graded ring. 

\smallskip

\item There is a natural ring isomorphism $A[S\# \Gamma] \cong A[S] \# \Gamma$. 
\end{enumerate}
\end{prop}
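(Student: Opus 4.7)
For part (1), I would argue directly from the definition (\ref{smgpringgr}) of the induced grading. The forward direction is immediate: since multiplication in $A[S]$ is the $A$-bilinear extension of the (contracted) multiplication on $S$, one has $A[S]_\alpha \cdot A[S]_\beta = A[S_\alpha S_\beta]$, which equals $A[S_{\alpha\beta}] = A[S]_{\alpha\beta}$ whenever $S_\alpha S_\beta = S_{\alpha\beta}$. For the converse, take any nonzero $s \in S_{\alpha\beta}$. Since $A$ is unital, $1 \cdot s$ lies in $A[S]_{\alpha\beta} = A[S]_\alpha A[S]_\beta$, so we may write $s = \sum_k x_k y_k$ with $x_k \in A[S]_\alpha$ and $y_k \in A[S]_\beta$. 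Expanding each $x_k$ and $y_k$ in the $A$-module basis $S \setminus \{0\}$ of $A[S]$ displays $s$ as an $A$-linear combination of products $s_{ki} t_{kj}$ with $s_{ki} \in S_\alpha$ and $t_{kj} \in S_\beta$. Collecting terms by basis elements of $A[S]$, the coefficient of $s$ on the right must equal $1$; in particular the set of triples $(k,i,j)$ with $s_{ki} t_{kj} = s$ is nonempty, whence $s \in S_\alpha S_\beta$.

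For part (2), the plan is to build the isomorphism as the $A$-linear extension of the tautological bijection of $A$-module bases. The contracted semigroup ring $A[S\#\Gamma]$ is a free $A$-module on the nonzero elements of $S\#\Gamma$, i.e., on symbols $sP_\alpha$ with $s \in S \setminus \{0\}$ and $\alpha \in \Gamma$. Likewise, as a free $A$-module, $A[S]\#\Gamma$ has basis indexed by pairs $(s,\alpha)$ with $s \in S\setminus\{0\}$ a basis element of $A[S]$ and $\alpha \in \Gamma$, which we also denote $sP_\alpha$. The identity on indexing sets therefore extends uniquely to an $A$-module isomorphism $\phi : A[S\#\Gamma] \to A[S]\#\Gamma$. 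To verify multiplicativity it suffices, by $A$-bilinearity, to compare $\phi$ on products of basis elements. By (\ref{smashmult1}), the product in $A[S\#\Gamma]$ equals $stP_\beta$ when $t \in S_{\alpha \beta^{-1}}$ and $st \neq 0$, and is zero otherwise. By (\ref{smashmult2}), the corresponding product $(sP_\alpha)(tP_\beta)$ in $A[S]\#\Gamma$ equals $s \cdot t_{\alpha\beta^{-1}} P_\beta$; but since $t \in A[S]$ is homogeneous of degree $\deg(t)$, the component $t_{\alpha\beta^{-1}}$ equals $t$ precisely when $t \in S_{\alpha\beta^{-1}}$ and equals zero otherwise. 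The two prescriptions therefore agree on all basis pairs.

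Neither part poses a serious obstacle. The only point requiring any real care is in the converse of (1): although the representation $s = \sum_k x_k y_k$ may involve heavy cancellation among the products $s_{ki} t_{kj}$, the fact that the coefficient of the specific basis element $s$ equals $1$ (and so is nonzero) forces at least one such product to equal $s$ itself. The remainder is pure bookkeeping, in particular keeping the two uses of the notation $sP_\alpha$ — as a basis symbol of $A[S\#\Gamma]$, versus as the product of $s\in A[S]$ with $P_\alpha$ inside $A[S]\#\Gamma$ — notationally distinct while the identification is being made.
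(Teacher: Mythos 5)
Your proposal is correct and follows essentially the same route as the paper's proof: factoring the basis elements of $A[S]_{\alpha\beta}$ for the forward direction of (1), extracting a single product $rs=t$ from the nonzero coefficient of $t$ for the converse, and the tautological identification of $A$-module bases, with multiplicativity checked on generators via (\ref{smashmult1}) and (\ref{smashmult2}), for (2). If anything, your explicit treatment of sums of products and of possible cancellation in the converse of (1) is slightly more careful than the paper's, which writes $1t$ as a single product of two homogeneous elements.
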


\begin{proof}
(1) Suppose that $S$ is strongly $\Gamma$-graded. Let $\alpha, \beta \in \Gamma$, and let $\sum_{s\in S} a^{(s)}s$ be an element of $A[S]_{\alpha\beta}$. Then for each $s$ with $a^{(s)} \neq 0$, we have $s=p^{(s)}q^{(s)}$, for some $p^{(s)}\in S_\alpha$ and $q^{(s)}\in S_\beta$. Hence 
\[\sum_{s\in S}a^{(s)}s = \sum_{s\in S} (a^{(s)} p^{(s)})(1q^{(s)})\in A[S]_{\alpha} A[S]_{\beta},\] 
which implies that $A[S]$ is strongly $\Gamma$-graded.

Conversely, suppose that $A[S]$ is strongly $\Gamma$-graded. Let $\alpha, \beta \in \Gamma$, and let $t\in S_{\alpha\beta}$. Then 
\[1t=\Big(\sum_{r\in S} a^{(r)}r\Big)\Big(\sum_{s\in S} b^{(s)}s\Big) = \sum_{r,s \in S} a^{(r)}b^{(s)}rs,\] 
for some $\sum_{r\in S} a^{(r)}r \in A[S]_{\alpha}$ and $\sum_{s\in S} b^{(s)}s \in A[S]_{\beta}$. Necessarily $a^{(r)}b^{(s)} =1$ for some $r,s \in S$, and the remaining coefficients are $0$, from which it follows that $t=rs \in S_\alpha S_\beta$. Thus $S$ is strongly $\Gamma$-graded.

(2) Define 
\begin{align*}
\phi :A[S\# \Gamma] & \longrightarrow A[S]\# \Gamma \\ \notag
\sum_{sP_\alpha \in S\# \Gamma\setminus\{0\}} a^{(sP_\alpha)}(sP_\alpha) & \longmapsto \sum_{\alpha \in \Gamma} \Big(\sum_{s\in S\setminus\{0\}} a^{(sP_\alpha)}s\Big) P_{\alpha}. 
\end{align*}
Clearly, $\phi$ is a bijection that respects addition. Thus, to conclude that $\phi$ is a ring isomorphism, it suffices to check that $\phi$ respects multiplication on summands. Now, let $a^{(sP_{\alpha})}(sP_{\alpha}), b^{(tP_{\beta})}(tP_{\beta}) \in A[S\# \Gamma]$. Then using (\ref{smashmult1}), (\ref{smgpringgr}), and (\ref{smashmult2}), we have
\begin{align*} 
\phi(a^{(sP_\alpha)}(sP_\alpha) b^{(tP_{\beta})}(tP_{\beta})) 
& = \left\{ \begin{array}{ll}
\phi(a^{(sP_\alpha)}b^{(tP_{\beta})}stP_{\beta}) & \text{if } st \neq 0 \text{ and } t\in S_{\alpha\beta^{-1}} \\
0 & \text{otherwise } 
\end{array}\right. \\ 
& = \left\{ \begin{array}{ll}
(a^{(sP_\alpha)}b^{(tP_{\beta})}st)P_{\beta} & \text{if } st \neq 0 \text{ and } t\in S_{\alpha\beta^{-1}} \\
0 & \text{otherwise } 
\end{array}\right.  \\ 
& = (a^{(sP_\alpha)}s)(b^{(tP_{\beta})}t)_{\alpha\beta^{-1}}P_{\beta}  \\
& = (a^{(sP_\alpha)}s)P_{\alpha} (b^{(tP_{\beta})}t)P_{\beta} \\
& = \phi(a^{(sP_\alpha)}(sP_\alpha))\phi(b^{(tP_{\beta})}(tP_{\beta})),
\end{align*}
as desired.
\end{proof}

While influence has typically flowed from ring theory to semigroup theory, the \emph{Rees}, or \emph{Munn, matrix ring}~\cite{marki} is an example of a ring construction that was directly influenced by semigroup theory. Here we give a graded version of this idea, which produces a rich class of graded rings, and does not seem to have appeared in the literature before. We will also relate these rings to the graded Rees matrix semigroups discussed in \S \ref{reesmatrixgroup}.

Let $A$ be a $\Gamma$-graded unital ring, and let $I$ and $J$ be non-empty (index) sets.  For all $i \in I$ and $j \in J$, fix $\alpha_i, \beta_j \in \Gamma$, and set $\overline{\alpha} = (\alpha_i)_{i\in I} \in \Gamma^I$ and $\overline{\beta}=(\beta_j)_{j\in J}\in \Gamma^J$. Let $B:=\M_{I,J}(A)$ denote the abelian group of all $I\times J$ matrices over $A$, with only finitely many nonzero entries. For each $\delta \in \Gamma$ let 
\begin{equation}\label{mtrxgrd}
B_\delta := \{(a_{ij}) \in B \mid i\in I, j \in J, a_{ij}\in A_{\alpha_i \delta \beta_j^{-1}} \}.
\end{equation}
Then $B=\bigoplus_{\delta \in \Gamma} B_\delta$. Next, let $p = (a_{ij})$ be a $J\times I$ matrix (possibly with infinitely many nonzero entries), such that  $a_{ji}\in A_{\beta_j \alpha_i^{-1}}$ for all $i\in I, j \in J$, and define multiplication by
\begin{equation} \label{grmtrxmult}
a.b:=a p b
\end{equation}
for all $a,b \in B$. It is easy to see that with this operation $B$ becomes a ring, which we denote by $\M_{I,J}^p(A)[\overline{\alpha}][\overline{\beta}]$. It is also easy to check that if $a \in B_\delta$ and $b \in B_\gamma$ for some $\delta, \gamma \in \Gamma$, then $apb \in B_{\delta \gamma}$. So $\M_{I,J}^p(A)[\overline{\alpha}][\overline{\beta}]$ is a $\Gamma$-graded ring, which we call a \emph{graded Rees matrix ring}.

Note that if $I=J=\{1,\dots,n\}$, $\overline{\alpha} = \overline{\beta}=(\alpha_1,\dots,\alpha_n)$, and $p$ is the identity matrix, then $\M_{I,J}^p(A)[\overline{\alpha}][\overline{\beta}]$ is simply the graded matrix ring $\M_n(A)(\alpha_1,\dots,\alpha_n)$, which has received considerable attention in the literature--see~\cite{hazi,grrings}. 

\begin{prop}
Let $A$ be a unital ring, and let $S$ be a $\Gamma$-graded semigroup. Then for all nonempty sets $I$ and $J$, tuples $\overline{\alpha} = (\alpha_i)_{i\in I} \in \Gamma^I$ and $\overline{\beta} = (\beta_j)_{j\in J}\in \Gamma^J$, and $J\times I$ matrices $p = (s_{ij})$ with $s_{ji}\in S_{\beta_j \alpha_i^{-1}}$ for all $i\in I, j \in J$, there is a natural graded ring isomorphism 
\[A [\E_{I,J}^p(S)[\overline{\alpha}][\overline{\beta}]] \cong \M_{I,J}^p(A[S])[\overline{\alpha}][\overline{\beta}],\]
viewing the former as a $\Gamma$-graded ring via the grading induced by that on $\E_{I,J}^p(S)[\overline{\alpha}][\overline{\beta}]$. 
\end{prop}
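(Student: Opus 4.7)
The plan is to build the isomorphism directly on $A$-bases, since both sides will turn out to be free $A$-modules on the same indexing set, and then verify that multiplication and grading are preserved.

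First, I would identify the natural $A$-basis on each side. On the left, by the construction of the contracted semigroup ring and since the elementary matrices $e_{ij}(s)$ (for $i \in I$, $j \in J$, and nonzero $s \in S$) exhaust the nonzero elements of $\E_{I,J}^p(S)[\overline{\alpha}][\overline{\beta}]$, the collection $\{e_{ij}(s) : i \in I, j \in J, s \in S \setminus \{0\}\}$ is a free $A$-basis for $A[\E_{I,J}^p(S)[\overline{\alpha}][\overline{\beta}]]$. On the right, $\M_{I,J}^p(A[S])[\overline{\alpha}][\overline{\beta}]$ is, as an $A$-module, the direct sum $\bigoplus_{(i,j) \in I\times J} A[S]$ with finite support, and $A[S]$ has $A$-basis $S \setminus \{0\}$. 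Writing $E_{ij}(a)$ for the matrix with entry $a$ in position $(i,j)$ and zeros elsewhere, the set $\{E_{ij}(s) : i \in I, j \in J, s \in S \setminus \{0\}\}$ is a free $A$-basis. Thus the $A$-linear extension of $e_{ij}(s) \mapsto E_{ij}(1 \cdot s)$ defines an $A$-module isomorphism $\widetilde{\phi}$.

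Next I would verify that $\widetilde{\phi}$ is a ring homomorphism by checking the rule on basis pairs. In $\E_{I,J}^p(S)[\overline{\alpha}][\overline{\beta}]$, equation~(\ref{hfbvhsphwq}) gives $e_{ij}(s)e_{kl}(t) = e_{il}(sp_{jk}t)$, interpreted as $0$ in the contracted ring when $sp_{jk}t = 0$ in $S$. In $\M_{I,J}^p(A[S])[\overline{\alpha}][\overline{\beta}]$, the product $E_{ij}(s) \cdot E_{kl}(t) = E_{ij}(s)\, p\, E_{kl}(t)$, computed via (\ref{grmtrxmult}), is easily seen to be the matrix $E_{il}(sp_{jk}t)$, where the product $sp_{jk}t$ is taken in $A[S]$ (and vanishes exactly when the semigroup product vanishes, since all three factors lie in $S$). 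Hence $\widetilde{\phi}(e_{ij}(s) e_{kl}(t)) = E_{il}(sp_{jk}t) = \widetilde{\phi}(e_{ij}(s))\widetilde{\phi}(e_{kl}(t))$, and multiplicativity follows by $A$-bilinear extension.

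Finally I would check that $\widetilde{\phi}$ respects the $\Gamma$-grading. By (\ref{Reesgrade}) and (\ref{smgpringgr}), the homogeneous component of degree $\delta$ on the left is the $A$-span of the basis elements $e_{ij}(s)$ with $s \in S_{\alpha_i \delta \beta_j^{-1}}$. By (\ref{mtrxgrd}) and the definition of the grading on $A[S]$ induced from $S$, the homogeneous component of degree $\delta$ on the right is the $A$-span of the matrices $E_{ij}(s)$ with $s \in S_{\alpha_i \delta \beta_j^{-1}}$. Thus $\widetilde{\phi}$ carries each graded component onto the corresponding one, giving a graded ring isomorphism. The main obstacle is purely bookkeeping: keeping the index conventions on $\overline{\alpha}$, $\overline{\beta}$, and the sandwich matrix $p$ consistent between the two constructions, and confirming that a product $sp_{jk}t$ formed inside $S$ agrees, under the embedding $S \hookrightarrow A[S]$, with the corresponding product of ring elements.
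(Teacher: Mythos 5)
Your proposal is correct and follows essentially the same route as the paper: both define the map by $A$-linear extension of $e_{ij}(s)\mapsto E_{ij}(s)$ (equivalently, sending $\sum a^{(e_{ij}(s))}e_{ij}(s)$ to the matrix with $(i,j)$-entry $\sum_s a^{(e_{ij}(s))}s$), check bijectivity and additivity, verify multiplicativity via (\ref{hfbvhsphwq}) and (\ref{grmtrxmult}), and verify the grading via (\ref{Reesgrade}), (\ref{smgpringgr}), and (\ref{mtrxgrd}). Your version simply spells out the basis-matching and degree bookkeeping that the paper leaves as routine.
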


\begin{proof}
Define 
\begin{align*}
\phi :A[\E_{I,J}^p(S)[\overline{\alpha}][\overline{\beta}]] & \longrightarrow \M_{I,J}^p(A[S])[\overline{\alpha}][\overline{\beta}] \\ \notag
\sum_{e_{ij}(s) \in \E_{I,J}^p(S)[\overline{\alpha}][\overline{\beta}]} a^{(e_{ij}(s))}e_{ij}(s) & \longmapsto \Big(\sum_{s \in S}a^{(e_{ij}(s))}s\Big). \notag
\end{align*}
Clearly, $\phi$ is a bijection that respects addition. Using (\ref{hfbvhsphwq}) and (\ref{grmtrxmult}), it is routine to check that $\phi$ respects multiplication, and hence it is a ring homomorphism. Finally, it follows from (\ref{Reesgrade}), (\ref{smgpringgr}), and (\ref{mtrxgrd}) that $\phi$ respects the grading.
\end{proof}

\section{Graph inverse semigroups}\label{hfgfhdhksdkkls}

In this section we explore gradings on a rich class of inverse semigroups, known as \emph{graph inverse semigroups}, first introduced in~\cite{ash}, along with gradings on various related objects. (See~\cite{MM} for more on the history of graph inverse semigroups, and~\cite{jones} for an alternative perspective on them.) Among other results, we classify the strongly graded graph inverse semigroups, and the graded graph inverse semigroups that produce strongly graded Leavitt path algebras (see (\ref{graphobjects})), which are defined below.

\subsection{Definitions and basics}\label{graphs-sect}

Recall that a \emph{directed graph} $E=(E^0,E^1,\ra,\so)$ consists of two sets, $E^0$ and $E^1$ (containing \emph{vertices} and \emph{edges}, respectively), together with functions $\so,\ra:E^1 \to E^0$, called \emph{source} and \emph{range}, respectively. A \emph{path} $x$ in $E$ is a finite sequence of (not necessarily distinct) edges $x=e_1\cdots e_n$ such that $\ra(e_i)=\so(e_{i+1})$ for $i=1,\dots,n-1$. In this case, $\so(x):=\so(e_1)$ is the \emph{source} of $x$, $\ra(x):=\ra(e_n)$ is the \emph{range} of $x$, and $|x|:=n$ is the \emph{length} of $x$. If $x = e_1\cdots e_n$ is a path in $E$ such that $\so(x)=\ra(x)$ and $\so(e_i)\neq \so(e_j)$ for every $i\neq j$, then $x$ is called a \emph{cycle}. For a vertex $v \in E^0$, we say that $v$ is a \emph{sink} if $\so^{-1}(v) = \emptyset$, that $v$ is a \emph{source} if $\ra^{-1}(v) = \emptyset$, and that $v$ is \emph{regular} if $0 < |\so^{-1}(v)| < \aleph_0$. We view the elements of $E^0$ as paths of length $0$ (extending $\so$ and $\ra$ to $E^0$ via $\so(v)=v$ and $\ra(v)=v$ for all $v\in E^0$), and denote by $\pth(E)$ the set of all paths in $E$. A infinite sequence $e_1e_2\cdots$ of edges in $E^1$ is called an \emph{infinite path} if $\ra(e_i)=\so(e_{i+1})$ for all $i \geq 1$. Given a finite or infinite path $p$ in $E$ and $x \in \pth(E)$, we say that $x$ is an \emph{initial subpath} of $p$ if $p=xq$ for some path $q$. Finally, $E$ is said to be \emph{row-finite} if $|\so^{-1}(v)| < \aleph_0$ for every $v \in E^0$. From now on we will refer to directed graphs as simply ``graphs".

\begin{deff}\label{shdnfhyhgkdk}
Given a graph $E=(E^0,E^1,\ra,\so)$, the \emph{graph inverse semigroup $\SS(E)$ of $E$} is the semigroup (with zero) generated by the sets $E^0$ and $E^1$, together with $\{e^{-1} \mid e\in E^1\}$, satisfying the following relations for all $v,w\in E^0$ and $e,f\in E^1$ (where $\delta$ is the Kronecker delta):
\begin{enumerate}
\item[(V)] $vw = \delta_{v,w}v$;

\item[(E1)] $\so(e)e=e\ra(e)=e$;

\item[(E2)] $\ra(e)e^{-1}=e^{-1}\so(e)=e^{-1}$;

\item[(CK1)] $e^{-1}f=\delta _{e,f}\ra(e)$.
\end{enumerate}
\end{deff}

We define $v^{-1}=v$ for each $v \in E^0$, and for any path $y=e_1\cdots e_n$ ($e_1,\dots, e_n \in E^1$) we let $y^{-1} = e_n^{-1} \cdots e_1^{-1}$. With this notation, it is easy to see that every nonzero element of $\SS(E)$ can be written as $xy^{-1}$ for some $x, y \in \pth(E)$, such that $\ra(x)=\ra(y)$. It is well-known that representations in this form of nonzero elements of $\SS(E)$ are unique. (This follows, for example, from the model for $\SS(E)$ constructed in~\cite[\S 3]{Paterson}.) It is also easy to verify that $\SS(E)$ is indeed an inverse semigroup, with $(xy^{-1})^{-1} = yx^{-1}$ for all $x, y \in \pth (E)$.

As semigroups defined by generators and relations, graph inverse semigroups lend themselves naturally to being graded (see Example~\ref{jgjgjejjiii}). Let $E$ be a graph, $\Gamma$ a group, and $\omega :E^1\rightarrow \Gamma$ a ``weight" map. Now extend $\omega$ to a function $\omega : \pth(E) \rightarrow \Gamma$ by letting \[\omega (e_1\cdots e_n) = \omega (e_1)\cdots \omega (e_n)\] for all $e_1,\dots, e_n \in E^1$, and letting $\omega (v) = \varepsilon$ for all $v \in V$. Then it is easy to see that $\SS(E)$ is $\Gamma$-graded, via 
\begin{align} \label{graphinvgr}
 \SS(E) \setminus \{0\} & \longrightarrow \Gamma \\
 xy^{-1} & \longmapsto \omega (x)\omega (y)^{-1}. \notag
\end{align}
Now letting $\Gamma = \Z$ and taking $w(e)=1$ for each $e \in E^1$, we obtain a $\Z$-grading $\phi : \SS(E) \setminus \{0\} \to \Z$ on $\SS(E)$, where $\phi(xy^{-1}) = |x|-|y|$ for all $x,y \in \pth(E)$. We refer to this as the \emph{natural} $\Z$-grading of $\SS(E)$.

We conclude this subsection with a couple of easy observations that relate properties of a graph $E$ to the natural partial order on $\SS(E)$, which will be useful later on.

\begin{lemma} \label{locstrgr2}
Let $E$ be a graph. Then the following are equivalent. 
\begin{enumerate}[\upshape(1)]
\item $E$ has no sinks;

\smallskip

\item There are no minimal, with respect to $\leq$, idempotents in $\SS(E)\setminus \{0\}$.
\end{enumerate}
\end{lemma}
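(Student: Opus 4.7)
The plan is to reduce everything to a concrete description of nonzero idempotents of $\SS(E)$ and of the natural partial order restricted to them. Using the uniqueness of the representation $xy^{-1}$ for nonzero elements of $\SS(E)$, together with relation (CK1), one shows that an element $xy^{-1}$ is idempotent (and nonzero) if and only if $x=y$. So the nonzero idempotents of $\SS(E)$ are precisely the elements of the form $xx^{-1}$ with $x \in \pth(E)$ (including the case $x = v \in E^0$, where $vv^{-1} = v$).

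Next I would compute the natural partial order on idempotents explicitly. For $x,y \in \pth(E)$, using the fact that idempotents in an inverse semigroup commute and applying (V), (E1), (E2), (CK1), the product $xx^{-1}yy^{-1}$ is nonzero precisely when one of $x,y$ is an initial subpath of the other, and in that case a short calculation gives
\[
xx^{-1} \cdot yy^{-1} \;=\; \begin{cases} xx^{-1} & \text{if } x = yz \text{ for some } z \in \pth(E), \\ yy^{-1} & \text{if } y = xw \text{ for some } w \in \pth(E).\end{cases}
\]
Hence $xx^{-1} \leq yy^{-1}$ if and only if $y$ is an initial subpath of $x$, and the inequality is strict exactly when the ``extending'' path has positive length.

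From this description, the minimality question becomes purely combinatorial. An idempotent $xx^{-1}$ is strictly greater than some nonzero idempotent if and only if there exists a path $yy^{-1}$ with $y = xz$ and $|z| \geq 1$, which happens if and only if there is an edge $e$ with $\so(e) = \ran(x)$, i.e.\ $\ran(x)$ is not a sink. Therefore $xx^{-1}$ is minimal in $\SS(E) \setminus \{0\}$ precisely when $\ran(x)$ is a sink.

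The equivalence then falls out immediately. If $E$ has no sinks, then for every $x \in \pth(E)$ the vertex $\ran(x)$ admits an outgoing edge, so $xx^{-1}$ is never minimal, giving (1) $\Rightarrow$ (2). Conversely, if $v \in E^0$ is a sink, then $v = vv^{-1}$ is a nonzero idempotent whose only extension is itself, so $v$ is a minimal nonzero idempotent, giving the contrapositive of (2) $\Rightarrow$ (1). The main (and essentially only) obstacle is setting up the order computation carefully; once the formula for $xx^{-1}yy^{-1}$ is in hand, the rest is immediate.
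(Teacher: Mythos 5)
Your proposal is correct and follows essentially the same route as the paper: both identify the nonzero idempotents as the elements $xx^{-1}$ with $x \in \pth(E)$, observe that the idempotents strictly below $xx^{-1}$ are exactly those of the form $(xz)(xz)^{-1}$ with $|z| \geq 1$, and conclude that $xx^{-1}$ is minimal precisely when $\ra(x)$ is a sink. The only difference is that you derive the order description from the defining relations, whereas the paper cites the structural results of Mesyan--Mitchell for the same facts.
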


\begin{proof}
(1) $\Rightarrow$ (2) Suppose that $E$ has no sinks, and that $u \in E(\SS(E)) \setminus \{0\}$. Then it is easy to see that $u = xx^{-1}$ for some $x \in \pth(E)$~\cite[Lemma 15(1)]{MM}. By hypothesis, there is some $e \in E^1$ satisfying $\so(e) = \ra(x)$. Then $u > xee^{-1}x^{-1}$, and so $u$ is not a minimal idempotent.

(2) $\Rightarrow$ (1) Suppose that (2) holds, and that $v \in E^0$. By hypothesis, there must exist $u \in E(\SS(E))\setminus \{0\}$ such that $u < v$. Writing $u = xx^{-1}$ for some $x \in \pth(E)$, necessarily $|x| \geq 1$, and $\so(x) = v$. Thus $v$ is not a sink.
\end{proof}

\begin{lemma} \label{locstrgr3}
Let $E$ be a graph. Then the following are equivalent.
\begin{enumerate}[\upshape(1)]
\item $E$ is row-finite;

\smallskip

\item For every maximal, with respect to $\leq$, idempotent $u$ in $\SS(E)$, there are only finitely many maximal idempotents in $\{v \in E(\SS(E)) \mid v< u\}$.
\end{enumerate}
\end{lemma}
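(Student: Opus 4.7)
My plan is to reduce this to a direct combinatorial calculation about idempotents in $\SS(E)$, using the known description of $E(\SS(E))$ from~\cite[Lemma 15(1)]{MM}, which was already invoked in the proof of Lemma~\ref{locstrgr2}: namely, every nonzero idempotent of $\SS(E)$ has the form $xx^{-1}$ for some $x \in \pth(E)$, and this representation is unique (reading $vv^{-1} = v$ for $v \in E^0$). The whole argument will then come down to describing the natural partial order on these idempotents and identifying the maximal elements below a given one.

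First, I would verify the order-theoretic picture: for $x, y \in \pth(E)$, one has $xx^{-1} \leq yy^{-1}$ if and only if $y$ is an initial subpath of $x$. The forward direction follows from $xx^{-1} = yy^{-1}xx^{-1}$, which forces $yy^{-1}xx^{-1}$ to be nonzero, and a short computation with the defining relations (V), (E1), (E2), (CK1) of Definition~\ref{shdnfhyhgkdk} then shows that the initial edges of $x$ must match those of $y$. The reverse direction is immediate by writing $x = yz$ and computing $yy^{-1}xx^{-1} = yzz^{-1}y^{-1} = xx^{-1}$. An immediate consequence is that the maximal idempotents of $\SS(E)$ are precisely the vertices $v \in E^0$, since a path admits a proper initial subpath exactly when its length is positive.

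Next I would identify the maximal idempotents strictly below a given vertex $v \in E^0$. If $xx^{-1} < v$, then $|x| \geq 1$, so $x$ factors as $x = ex'$ with $e \in E^1$, $\so(e) = v$, and $x'$ a path starting at $\ra(e)$; hence $xx^{-1} \leq ee^{-1}$. Conversely, for any $e \in \so^{-1}(v)$, the idempotent $ee^{-1}$ is maximal in $\{w \in E(\SS(E)) \mid w < v\}$: an idempotent $yy^{-1}$ strictly between $ee^{-1}$ and $v$ would correspond to a path $y$ that is a proper initial subpath of $e$ and has $v$ as a proper initial subpath, which is impossible since the only initial subpath of $e$ other than $e$ itself is $v$. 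Furthermore, distinct edges $e, e' \in \so^{-1}(v)$ yield distinct idempotents $ee^{-1} \neq e'e'^{-1}$ by the uniqueness of the normal form $xy^{-1}$. Thus the set of maximal idempotents below $v$ is in bijection with $\so^{-1}(v)$.

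With this bijection in hand, the equivalence becomes tautological: condition~(2) says that $|\so^{-1}(v)| < \aleph_0$ for every $v \in E^0$, which is exactly the definition of row-finiteness. I do not anticipate any real obstacle; the only place where one must be a little careful is the argument that $xx^{-1} \leq yy^{-1}$ implies $y$ is an initial subpath of $x$, where one has to rule out the possibility that the first edges of $x$ and $y$ differ (which would kill the product via (CK1)). The sink case (where $\so^{-1}(v) = \emptyset$ and the set of idempotents strictly below $v$ is empty) is handled vacuously by both conditions, so no separate treatment is required.
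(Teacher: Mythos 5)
Your proposal is correct and follows essentially the same route as the paper: identify the maximal idempotents of $\SS(E)$ with the vertices, and the maximal idempotents strictly below a vertex $v$ with the set $\{ee^{-1} \mid e \in \so^{-1}(v)\}$, whence condition (2) is a restatement of row-finiteness. The only difference is that you prove the order-theoretic facts about $E(\SS(E))$ directly from the defining relations, where the paper cites them from~\cite[Lemma 15]{MM}; the paper also explicitly disposes of the degenerate case $\SS(E)=\{0\}$, which your argument handles implicitly.
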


\begin{proof}
First, suppose that $u \in E^0$, and $v \in E(\SS(E)) \setminus \{0\}$ is such that $v <u$. Then, by the defining relations of $\SS(E)$, we have $v \notin E^0$. Moreover, by~\cite[Lemma 15(4)]{MM}, the maximal (nonzero) idempotents in $E(\SS(E)) \setminus E^0$ are precisely the elements of the form $ee^{-1}$, for some $e \in E^1$. It follows that if $v$ is maximal in $\{v \in E(\SS(E)) \mid v< u\}$, then it must be the case that $v = ee^{-1}$ for some $e \in E^1$, where necessarily $\so(e) = u$.

(1) $\Rightarrow$ (2) Suppose that $E$ is row-finite, and that $u \in E(\SS(E))$ is maximal. We may assume that $u \neq 0$, since otherwise $\SS(E) = \{0\}$, and (2) holds vacuously. Then, by~\cite[Lemma 15(3)]{MM}, $u \in E^0$. By the above, either $\{v \in E(\SS(E)) \mid v< u\} = \{0\}$, or the maximal idempotents in $\{v \in E(\SS(E)) \mid v< u\}$ are of the form $ee^{-1}$ ($e \in E^1$), where $\so(e) = u$. Since $E$ is row-finite, there can be only finitely many such elements.

(2) $\Rightarrow$ (1) Suppose that $u \in E^0$. Then, by~\cite[Lemma 15(3)]{MM}, $u$ is a maximal idempotent in $\SS(E)$. Supposing that (2) holds, there are only finitely many maximal idempotents in $\{v \in E(\SS(E)) \mid v< u\}$. By the first paragraph, these maximal idempotents are precisely the elements of $\SS(E)$ of the form $ee^{-1}$ ($e \in E^1$), where $\so(e) = u$ (unless $\{v \in E(\SS(E)) \mid v< u\} = \{0\}$). It follows that $u$ can emit only finitely many edges, and so $E$ is row-finite.
\end{proof}

\subsection{Strongly graded graph inverse semigroups}

In this subsection we give a reasonably complete description of the graph inverse semigroups that are strongly graded, paying particular attention to the natural $\Z$-grading.

\begin{lemma} \label{str-lemma}
Let $\SS(E)$ be a strongly $\Gamma$-graded graph inverse semigroup. Then for all $\alpha \in \Gamma$ and all $x \in \pth(E)$, there exists $y \in \pth(E)$ such that $xy^{-1} \in \SS(E)_\alpha$ and $\ra(y) = \ra(x)$. Moreover, if $\alpha \neq \varepsilon$, then $y \neq x$. 
\end{lemma}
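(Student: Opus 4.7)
The plan is to apply Proposition~\ref{hgfgftgr}(5), which says that in a strongly $\Gamma$-graded inverse semigroup $S$, every idempotent can be realised as $s^{-1}s$ for some $s$ in any prescribed homogeneous component $S_\beta$. We will feed the vertex idempotent $\ra(x) \in E(\SS(E))$ into this statement, choosing the component $\beta = \alpha^{-1}\deg(x)$, and then extract the desired path $y$ by analysing what $s^{-1}s$ looks like in a graph inverse semigroup.

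First I would set $\beta = \alpha^{-1}\deg(x)$ (with $\deg(x)=\varepsilon$ if $|x|=0$) and note that $\ra(x)$ is a nonzero idempotent in $\SS(E)$. By Proposition~\ref{hgfgftgr}(5) applied to the strongly graded inverse semigroup $\SS(E)$, there exists a nonzero $s \in \SS(E)_{\beta}$ with $s^{-1}s = \ra(x)$. Writing $s = pq^{-1}$ in the standard form of Definition~\ref{shdnfhyhgkdk} (with $\ra(p)=\ra(q)$), I compute
\[
s^{-1}s \;=\; qp^{-1}pq^{-1} \;=\; q\,\ra(p)\,q^{-1} \;=\; qq^{-1}.
\]
The key observation is then that $qq^{-1}$ equals a vertex only when $|q|=0$: if $|q| \geq 1$, the unique normal form $qq^{-1}$ is not a vertex. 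Hence $q = \ra(x)$, and using relations (E1) and (E2) this forces $s = p$, a path with $\ra(p)=\ra(x)$ and $\deg(p) = \beta$.

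Setting $y := p$ then gives a path with $\ra(y) = \ra(x)$, so that $xy^{-1}$ is a well-defined nonzero element of $\SS(E)$, and
\[
\deg(xy^{-1}) \;=\; \deg(x)\deg(y)^{-1} \;=\; \deg(x)\,\bigl(\alpha^{-1}\deg(x)\bigr)^{-1} \;=\; \alpha,
\]
as required. For the ``moreover'' clause, if $\alpha \neq \varepsilon$ then $\deg(y)=\alpha^{-1}\deg(x) \neq \deg(x)$, which forces $y \neq x$ (distinct homogeneous degrees cannot coincide on nonzero elements).

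The main obstacle is really the normal-form argument in the middle step: one needs to know that the representation of elements of $\SS(E)$ as $xy^{-1}$ is unique (cited just after Definition~\ref{shdnfhyhgkdk}) in order to conclude that $qq^{-1} \in E^0$ forces $q$ to be a vertex. Everything else is a direct bookkeeping of degrees. A minor point to double-check is that Proposition~\ref{hgfgftgr}(5) does produce a \emph{nonzero} $s$; this is automatic because $\ra(x)$ itself is nonzero.
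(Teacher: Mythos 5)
Your proof is correct, but it takes a genuinely different route from the paper's. The paper starts from the idempotent $xx^{-1} \in \SS(E)_\varepsilon$, uses $S_\varepsilon = S_\alpha S_{\alpha^{-1}}$ to factor it as $pq^{-1}st^{-1}$ with $pq^{-1} \in \SS(E)_\alpha$ and $st^{-1} \in \SS(E)_{\alpha^{-1}}$, and then runs a case analysis ($q = sr$ versus $s = qr$) on how that product can be nonzero, extracting $y$ from uniqueness of normal forms in each case. You instead apply the already-established equivalence of Proposition~\ref{hgfgftgr}(5) to the \emph{vertex} idempotent $\ra(x)$, with the degree $\beta = \alpha^{-1}\deg(x)$ chosen so that the bookkeeping comes out to $\alpha$ at the end, and then use a single normal-form computation ($s^{-1}s = qq^{-1}$, which can only be a vertex if $q$ is that vertex) to see that $s$ is itself a path ending at $\ra(x)$. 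Your appeal to Proposition~\ref{hgfgftgr} is legitimate: $\SS(E)$ is an inverse semigroup, hence regular, hence has local units, and the inverse-semigroup clauses (4)--(7) apply. The payoff of your version is that it replaces the two-case cancellation analysis with one clean computation, at the cost of leaning on the earlier proposition; both arguments ultimately rest on the uniqueness of the representation $xy^{-1}$, which you correctly flag as the load-bearing step. Your handling of the ``moreover'' clause (distinct degrees force $y \neq x$) matches the paper's in substance.
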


\begin{proof}
Let $\alpha \in \Gamma$ and $x \in \pth(E)$. Then $xx^{-1} \in \SS(E)_\varepsilon$, since $xx^{-1}$ is an idempotent. Since $\SS(E)$ is strongly graded, we can find $p,q,s,t \in \pth(E)$ such that $pq^{-1} \in \SS(E)_\alpha$, $st^{-1} \in \SS(E)_{\alpha^{-1}}$, and $pq^{-1}st^{-1} = xx^{-1}$. In particular, $pq^{-1}st^{-1} \neq 0$, and hence (by (E1), (E2), and (CK1)), there exists $r \in \pth(E)$ such that either $q = sr$ or $s=qr$. 

If $q = sr$, then, since $\ra(s)=\ra(t)$, we have 
\[xx^{-1} = pq^{-1}st^{-1} = pr^{-1}s^{-1}st^{-1} = pr^{-1}t^{-1} = p(tr)^{-1}.\] 
By the aforementioned uniqueness of the representations of the nonzero elements of $\SS(E)$, we conclude that $p=x$. Hence $xq^{-1} = pq^{-1} \in \SS(E)_\alpha$, and $pq^{-1} \neq 0$ implies that $\ra(q) = \ra(x)$, as desired. 

Similarly, if $s=qr$, then 
\[xx^{-1}  = pq^{-1}st^{-1} = pq^{-1}qrt^{-1} = prt^{-1},\] 
which implies that $x=t$. Hence $sx^{-1}  = st^{-1} \in \SS(E)_{\alpha^{-1}}$, and $\ra(s) = \ra(x)$. Therefore $xs^{-1} \in \SS(E)_\alpha$, again giving the desired conclusion.

The final claim follows from the fact that if $\alpha \neq \varepsilon$, then $xx^{-1} \notin \SS(E)_\alpha$, since $\SS(E)_\alpha$ contains no nonzero idempotents.
\end{proof}

\begin{cor} \label{source-cor}
Let $E$ be a graph having a source vertex $v \in E^0$. Then any strong grading on $\SS(E)$ is trivial.
\end{cor}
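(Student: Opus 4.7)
The plan is to apply Lemma~\ref{str-lemma} directly with $x=v$, and then exploit the structural constraint that a source vertex admits no incoming edges. Assume $\SS(E)$ is strongly $\Gamma$-graded. For each $\alpha \in \Gamma$, Lemma~\ref{str-lemma} supplies a path $y \in \pth(E)$ with $\ra(y) = \ra(v) = v$ and $vy^{-1} \in \SS(E)_\alpha$.

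Now I would use the defining property of a source to pin down $y$ uniquely. Since $\ra^{-1}(v) = \emptyset$, no edge of $E$ has range $v$; hence a path $y = e_1\cdots e_n$ with $n \geq 1$ would satisfy $\ra(y) = \ra(e_n) \neq v$. The only remaining possibility is $|y| = 0$, in which case $y \in E^0$ and $\ra(y) = y$. Thus the equation $\ra(y) = v$ forces $y = v$.

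The ``moreover'' clause of Lemma~\ref{str-lemma} then closes the argument: if $\alpha \neq \varepsilon$, then the promised $y$ must satisfy $y \neq x = v$, contradicting what we just established. Therefore the only $\alpha \in \Gamma$ for which the lemma can produce such a $y$ is $\alpha = \varepsilon$, and so every element of $\Gamma$ must equal $\varepsilon$. Equivalently, $\SS(E) = \SS(E)_\varepsilon$, which is to say that the grading is trivial in the sense defined earlier in the paper.

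There is no real obstacle in this argument. The only fact that might warrant a sentence of comment is the graph-theoretic observation that a path ending at a source must have length zero, which is immediate from the definition of ``source''. Everything else is a direct invocation of the already-proven Lemma~\ref{str-lemma} together with the ``moreover'' clause, which was evidently included precisely for applications of this kind.
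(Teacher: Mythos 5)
Your proposal is correct and follows essentially the same route as the paper: both apply Lemma~\ref{str-lemma} with $x=v$, observe that a source admits no path of positive length ending at it (so the lemma's $y$ must equal $v$), and then invoke the ``moreover'' clause to rule out every $\alpha \neq \varepsilon$. The only difference is presentational -- the paper phrases it as a contradiction starting from a non-trivial grading, while you argue directly for each $\alpha$ -- and both arrive at the same (slightly stronger) conclusion that $\Gamma = \{\varepsilon\}$.
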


\begin{proof}
Suppose that $\SS(E)$ is strongly $\Gamma$-graded. Suppose further that the grading is not trivial, and let $\alpha \in \Gamma \setminus \{\varepsilon\}$. Taking $x = v$, by Lemma~\ref{str-lemma}, there exists $y \in \pth(E)$ such that $\ra(y) = v$ and $y \neq v$ (and $y^{-1} \in \SS(E)_\alpha$), contrary to the hypothesis that $v$ is a source. Hence it must be the case that $\Gamma = \{\varepsilon\}$, i.e., the grading must be trivial.
\end{proof}

\begin{cor} \label{usual-grading}
Let $E$ be a graph. Then $\SS(E)$ is strongly graded in the natural $\Z$-grading if and only if $E$ is empty.
\end{cor}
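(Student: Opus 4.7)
The ``if'' direction is essentially vacuous: if $E$ is empty, then $\SS(E) = \{0\}$, and every grading on $\{0\}$ is trivially strong. So the content of the corollary lies in the ``only if'' direction, which I plan to handle by a direct appeal to Lemma~\ref{str-lemma}.

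My plan is to assume that $\SS(E)$ is strongly graded in the natural $\Z$-grading and derive that $E^0 = \emptyset$ (which forces $E^1 = \emptyset$ as well, since every edge has a source in $E^0$). Suppose, toward a contradiction, that there is some $v \in E^0$. Apply Lemma~\ref{str-lemma} with $\alpha = 1 \in \Z$ and $x = v$. This yields a path $y \in \pth(E)$ with $vy^{-1} \in \SS(E)_1$ and $\ra(y) = \ra(v) = v$. But in the natural $\Z$-grading, $\deg(vy^{-1}) = |v| - |y| = -|y| \leq 0$, so $\deg(vy^{-1}) = 1$ is impossible. This contradiction shows $E^0 = \emptyset$, hence $E$ is empty.

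There is essentially no obstacle here: the lemma does all the work, and the key observation is simply that the natural $\Z$-grading on $\SS(E)$ is supported on $\{|x|-|y| : x,y \in \pth(E)\}$, which cannot reach the value $1$ starting from a vertex $x = v$ of length $0$. One could alternatively try to deduce the result from Corollary~\ref{source-cor}, but that corollary requires the presence of a source vertex and only concludes that the grading is trivial, so it would not immediately rule out graphs with no sources and no sinks; the direct argument above is cleaner and handles every graph uniformly.
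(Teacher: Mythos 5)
Your proposal is correct and follows essentially the same route as the paper: both directions are handled identically, with the nontrivial direction resting on Lemma~\ref{str-lemma} applied with $\alpha=1$ and $x=v\in E^0$, and the contradiction coming from the fact that $\deg(vy^{-1})=-|y|\le 0$ cannot equal $1$ in the natural $\Z$-grading. The only cosmetic difference is that the paper invokes the ``moreover $y\neq x$'' clause of the lemma to get $|y|\ge 1$, whereas your observation that $-|y|\le 0<1$ makes that clause unnecessary.
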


\begin{proof}
Suppose that $E$ is the empty graph. Then $\SS(E) = \{0\}$, and $\SS(E)_{n} = \{0\}$ for all $n \in \Z$. Thus $\SS(E)_{n}\SS(E)_{m} = \SS(E)_{n+m}$ for all $n,m \in \Z$.

For the converse, suppose that $\SS(E)$ is strongly graded in the natural $\Z$-grading, and that $E$ is nonempty. Then we can find $x \in E^0$, and so, by Lemma~\ref{str-lemma}, there exists $y \in \pth(E)$ such that $y^{-1} \in \SS(E)_{1}$, $\ra(y) = x$, and $y \neq x$. Since $\ra(y) = x$ and $y \neq x$, necessarily $|y| \geq 1$. But then $y^{-1} \in \SS(E)_{1}$ contradicts the definition of the natural $\Z$-grading. Thus if $\SS(E)$ is strongly graded in the natural $\Z$-grading, then $E$ must be empty.
\end{proof}

\begin{lemma} \label{zn-lemma}
Let $E$ be a nonempty graph with no source vertices, and $n$ a positive integer. Then $\SS(E)$ is strongly $\Z/n\Z$-graded, via
\begin{align*} 
\phi :  \SS(E) \setminus \{0\} & \longrightarrow \Z/n\Z\\
 xy^{-1} & \longmapsto (|x|-|y|) + n\Z. 
\end{align*}
\end{lemma}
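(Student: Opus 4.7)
The plan is to invoke Proposition~\ref{hgfgftgr}(4) to reduce the problem to a purely combinatorial statement about paths in $E$, and then use the no-source hypothesis to produce paths of arbitrary non-negative length ending at any prescribed vertex.

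First I would verify that $\phi$ is indeed a grading. This is automatic: composing the natural $\Z$-grading of $\SS(E)$ with the canonical quotient map $\Z \rightarrow \Z/n\Z$ yields $\phi$, so the grading axioms transfer. (Equivalently, apply the construction (\ref{graphinvgr}) with weight $\omega(e) = 1 + n\Z$ for every $e \in E^1$.) Since $\SS(E)$ is an inverse semigroup with local units (every nonzero $xy^{-1}$ satisfies $\so(x) \cdot xy^{-1} = xy^{-1} = xy^{-1} \cdot \so(y)$), Proposition~\ref{hgfgftgr}(4) reduces strong grading to the claim that
\[
E(\SS(E)) \;=\; \{ss^{-1} \mid s \in \SS(E)_\alpha\} \quad \text{for every } \alpha \in \Z/n\Z.
\]

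Fix $\alpha = k + n\Z$, and let $u \in E(\SS(E))$. The case $u = 0$ is handled by $s = 0$, so assume $u = xx^{-1}$ for some $x \in \pth(E)$. Choose a non-negative integer $m$ with $m \equiv |x| - k \pmod{n}$. The key step is to construct a path $y \in \pth(E)$ of length exactly $m$ with $\ra(y) = \ra(x)$. If $m = 0$, take $y = \ra(x)$. For $m \geq 1$, the no-source hypothesis lets us inductively pick edges $e_m, e_{m-1}, \ldots, e_1$ with $\ra(e_m) = \ra(x)$ and $\ra(e_i) = \so(e_{i+1})$ for each $i < m$; then $y = e_1 e_2 \cdots e_m$ has $|y| = m$ and $\ra(y) = \ra(e_m) = \ra(x)$. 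Setting $s = xy^{-1}$ gives
\[
\deg(s) = (|x| - |y|) + n\Z = (|x| - m) + n\Z = k + n\Z = \alpha,
\]
and
\[
ss^{-1} = xy^{-1}yx^{-1} = x\,\ra(y)\,x^{-1} = x\,\ra(x)\,x^{-1} = xx^{-1} = u,
\]
completing the verification.

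The main obstacle is the construction of paths of arbitrary prescribed length ending at a chosen vertex, which is precisely where the no-source assumption is essential — without it, the achievable lengths at a given vertex could be constrained, and the required coset $k + n\Z$ might not be representable. Everything else is formal: the grading axioms follow from the natural $\Z$-grading by functoriality of the quotient, and the reduction to condition (4) of Proposition~\ref{hgfgftgr} is immediate once local units are noted.
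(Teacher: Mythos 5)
Your proof is correct, but it routes through a different reduction than the paper. The paper verifies the definition of strong grading directly: given $0 \leq a,b < n$ and a nonzero $xy^{-1} \in \SS(E)_{\overline{a+b}}$, it uses the no-source hypothesis to build $z \in \pth(E)$ with $\ra(z)=\ra(x)$ and $|z| = |y|+b$, and factors $xy^{-1} = (xz^{-1})(zy^{-1})$ with the two factors in $\SS(E)_{\overline{a}}$ and $\SS(E)_{\overline{b}}$. You instead invoke Proposition~\ref{hgfgftgr}(4) to reduce to showing every idempotent is $ss^{-1}$ for some $s$ of each degree, which lets you work only with idempotents $xx^{-1}$ rather than general elements $xy^{-1}$; the combinatorial core — growing a path of prescribed length backward into $\ra(x)$ using the absence of sources — is identical in both arguments. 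Your version buys a marginally cleaner target at the cost of relying on the local-units hypothesis of Proposition~\ref{hgfgftgr} (which you correctly verify) and on the standard fact that every nonzero idempotent of $\SS(E)$ has the form $xx^{-1}$ (used elsewhere in the paper with a citation to \cite[Lemma 15(1)]{MM}; you should cite it too). Both proofs are sound.
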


\begin{proof}
The map $\phi$ is a grading, since it is the composite of the natural grading $\SS(E) \setminus \{0\} \to \Z$ with the quotient group homomorphism $\Z \to \Z/n\Z$. To show that $\phi$ is a strong grading, let $0 \leq a,b < n$ be integers, and let $xy^{-1} \in \SS(E)_{\overline{a+b}}$, for some $x,y \in \pth(E)$ with $\ra(x)=\ra(y)$ (where $\overline{c} : = c + n\Z$ for all $c \in \Z$). Since $E$ has no sources, we can find $z \in \pth(E)$ such that $\ra(z) = \ra(x)$ and $|z| = |y|+b$. Then
\[\phi(xz^{-1}) = \overline{|x| - |y| - b} = \overline{a + b - b} = \overline{a},\] and 
\[\phi(zy^{-1}) = \overline{|y| + b - |y|} = \overline{b}.\]
Hence \[xy^{-1} = (xz^{-1})(zy^{-1})  \in \SS(E)_{\overline{a}}\SS(E)_{\overline{b}},\] and so $\phi$ is a strong $\Z/n\Z$-grading.
\end{proof}

\begin{thm} \label{strgrgis}
A graph inverse semigroup $\SS(E)$ has a nontrivial strong grading if and only if the graph $E$ is nonempty and has no source vertices.
\end{thm}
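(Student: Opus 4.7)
The plan is to deduce the theorem as a direct consequence of the preceding corollaries and lemmas, splitting the proof into the two implications.

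For the forward direction, I would assume that $\SS(E)$ admits a nontrivial strong $\Gamma$-grading. First I would rule out the empty graph: if $E$ is empty then $\SS(E) = \{0\}$, and the only grading on $\{0\}$ is the trivial one. Next, to rule out source vertices, I would invoke Corollary~\ref{source-cor}, which says directly that the presence of a source forces every strong grading to be trivial. This handles the forward implication cleanly with no new work.

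For the backward direction, I would assume $E$ is nonempty and has no source vertices. The first observation to record is that these hypotheses actually force $E^1 \neq \emptyset$: pick any $v \in E^0$, which exists by nonemptiness; since $v$ is not a source, there is at least one edge $e \in E^1$ with $\ra(e) = v$. Then I would apply Lemma~\ref{zn-lemma} with $n = 2$ (any integer $n \geq 2$ works) to produce a strong $\Z/n\Z$-grading $\phi$ on $\SS(E)$. Finally I would check nontriviality: the edge $e$ produced above satisfies $\phi(e) = \overline{1} \neq \overline{0}$ in $\Z/2\Z$, so the support of $\phi$ is not contained in $\{\overline{0}\}$.

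There is essentially no obstacle here, since Corollary~\ref{source-cor} and Lemma~\ref{zn-lemma} between them already do all the work; the theorem is just the combination of the two, packaged with the mild observation that ``nonempty plus no sources'' forces at least one edge to exist (so that the $\Z/n\Z$-grading from Lemma~\ref{zn-lemma} is witnessably nontrivial). The only spot that requires a moment's care is making sure that one does not accidentally slip into the trivial case, which is why pinning down a concrete edge of nonzero degree is the cleanest way to close the argument.
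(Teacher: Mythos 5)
Your proposal is correct and follows exactly the paper's route: the forward direction is Corollary~\ref{source-cor} plus the observation that $\SS(E)=\{0\}$ for empty $E$, and the backward direction is Lemma~\ref{zn-lemma}. Your explicit check that a nonempty source-free graph has an edge of nonzero degree (so the $\Z/n\Z$-grading is genuinely nontrivial) is a small detail the paper leaves implicit, but it is the same argument.
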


\begin{proof}
This follows immediately from Corollary~\ref{source-cor} and Lemma~\ref{zn-lemma}, upon noting that if $E$ is empty, then $\SS(E) = \{0\}$.
\end{proof}

We note that the grading constructed in Lemma~\ref{zn-lemma} is essentially the only  strong grading applicable to the entire class of graph inverse semigroups. More specifically, if $E$ is the graph with one vertex and one edge, then this grading is effectively the only strong grading for $\SS(E)$. To see this and make it more precise, we recall that in this case $\SS(E)\setminus \{0\}$ is the bicyclic monoid, having the following presentation as a semigroup: 
\[\langle x, x^{-1} \mid x^{-1}x = 1\rangle,\] 
where we identify $x$ with the edge of $E$ and $1$ with the vertex of $E$. It follows that any grading $\phi : \SS(E) \setminus \{0\} \to \Gamma$ is a semigroup homomorphism, and is completely determined by $\phi(x)$. Thus $\phi(\SS(E) \setminus \{0\})$ is necessarily a cyclic group, and hence isomorphic to either $\Z$ or $\Z/n\Z$, for some integer $n$. Now, if $\phi(\SS(E) \setminus \{0\}) \cong \Z$, then $\phi(x)$ is  necessarily mapped to either $-1$ or $1$ under this isomorphism. Composing with the isomorphism $\Z \to \Z$ that sends $-1 \mapsto 1$, if necessary, we can then identify $\phi$ with the natural $\Z$-grading of $\SS(E)$. Hence, by Corollary~\ref{usual-grading}, $\phi$ is not a strong grading in this case. Thus, if $\phi : \SS(E) \setminus \{0\} \to \Gamma$ is a strong grading, then $\phi(\SS(E) \setminus \{0\}) \cong \Z/n\Z$ for some integer $n$. Replacing $\Z/n\Z$ with an isomorphic copy, if necessary, in this situation $\phi$ can be identified with the grading in Lemma~\ref{zn-lemma}.

We conclude this subsection with a description of the locally strongly $\Z$-graded graph inverse semigroups (see Definition~\ref{locallystrgrdef}).

\begin{prop} \label{locstrgrgraph}
Let $E$ be a graph. Then the following are equivalent. 
\begin{enumerate}[\upshape(1)]
\item $\SS(E)$ is locally strongly graded in the natural $\Z$-grading;

\smallskip

\item For all $v \in E^{0}$ and all $n \in \Z$ there exist $x,y \in \pth(E)$ such that $\so(x) = v$, $\ra(x) = \ra(y)$, and $|x|-|y| = n$.
\end{enumerate}
\end{prop}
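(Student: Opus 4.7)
The plan is to apply Proposition~\ref{sussa}, specifically the equivalence of its conditions (1) and (2), in order to translate local strong gradedness into a statement about the idempotent semilattice of $\SS(E)$ together with the distinguished subsets $E(\SS(E))_n$, and then to unpack both ingredients in terms of paths. Throughout, I would use the fact that every nonzero element of $\SS(E)$ has a unique representation $xy^{-1}$ with $x, y \in \pth(E)$ and $\ra(x) = \ra(y)$.

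First, I would recall that the nonzero idempotents of $\SS(E)$ are precisely $\{zz^{-1} \mid z \in \pth(E)\}$, and verify from the defining relations (CK1), (E1), (E2) that $aa^{-1} \leq zz^{-1}$ if and only if $z$ is an initial subpath of $a$. Next, using the identity $(ab^{-1})(ab^{-1})^{-1} = a\ra(b)a^{-1} = aa^{-1}$ (the second equality using $\ra(b) = \ra(a)$), I would describe
\[
E(\SS(E))_n \setminus \{0\} = \{aa^{-1} \mid a \in \pth(E) \text{ and } \exists\, b \in \pth(E) \text{ with } \ra(a) = \ra(b) \text{ and } |a|-|b| = n\}.
\]
Feeding these two descriptions into Proposition~\ref{sussa}(2) turns (1) into the following statement, call it $(2')$: for every $n \in \Z$ and every $z \in \pth(E)$, there exist $a, b \in \pth(E)$ such that $z$ is an initial subpath of $a$, $\ra(a) = \ra(b)$, and $|a| - |b| = n$.

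It then suffices to prove $(2) \Leftrightarrow (2')$. The implication $(2') \Rightarrow (2)$ is immediate, since vertices are paths of length $0$ and having a vertex $v$ as an initial subpath of $a$ means exactly $\so(a) = v$. For $(2) \Rightarrow (2')$, given $z \in \pth(E)$ and $n \in \Z$, I would apply $(2)$ at the vertex $\ra(z)$ with integer $n - |z|$ to obtain paths $x', y'$ with $\so(x') = \ra(z)$, $\ra(x') = \ra(y')$, and $|x'| - |y'| = n - |z|$. Setting $a = zx'$ (well-defined since $\ra(z) = \so(x')$) and $b = y'$ then gives $z$ as an initial subpath of $a$, $\ra(a) = \ra(b)$, and $|a| - |b| = |z| + (n - |z|) = n$, as required. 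This argument works uniformly for all signs of $n$, since $(2)$ is assumed for every integer.

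The main obstacle is the partial-order bookkeeping in the first step: one must be careful that \emph{smaller} idempotents correspond to \emph{longer} paths having the original path as an initial subpath (rather than the reverse), and must track the range conditions so that concatenations such as $zx'$ are legal. Once these identifications are nailed down, the remainder of the argument is a routine translation between the semigroup-theoretic and graph-theoretic pictures.
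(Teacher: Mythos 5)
Your proposal is correct and follows essentially the same route as the paper: both reduce condition (1) to condition (2) of Proposition~\ref{sussa}, use the description of nonzero idempotents as $zz^{-1}$ with the natural partial order corresponding to initial subpaths, and handle the implication from (2) by concatenating a path at $\ra(z)$ of the appropriately shifted length $n-|z|$ (the paper's element $q=xyz^{-1}$ with $v=qq^{-1}$ is exactly your $a=zx'$, $b=y'$). The only difference is organizational: you first fully translate $\leq$ and $E(\SS(E))_n$ into path language and prove an intermediate equivalence $(2)\Leftrightarrow(2')$, whereas the paper argues the two implications directly.
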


\begin{proof}
(1) $\Rightarrow$ (2) Suppose that (1) holds, and that $v \in E^{0}$ and $n \in \Z$. By Proposition~\ref{sussa}, there exists $x \in \SS(E)_n\setminus \{0\}$ such that $xx^{-1} \leq v$. Writing $x = yz^{-1}$ for some $y,z \in \pth(E)$ with $\ra(y) = \ra(z)$, we have $|y|-|z| = n$, and necessarily $\so(y) = v$.

(2) $\Rightarrow$ (1) Supposing that (2) holds, by Proposition~\ref{sussa}, to prove (1) it suffices to take arbitrary $n \in \Z$ and $u \in E(\SS(E)) \setminus \{0\}$, and show that $v \leq u$ for some $v \in E(\SS(E))_n \setminus \{0\}$.

It is easy to see that $u = xx^{-1}$ for some $x \in \pth(E)$ \cite[Lemma 15(1)]{MM}. By (2), there exist $y,z \in \pth(E)$ such that $\so(y) = \ra(x)$, $\ra(y) = \ra(z)$, and  $|y|-|z| = n-|x|$. Letting $q = xyz^{-1}$, we see that $q \in \SS(E)_n$. Hence $v: = qq^{-1} \in E(\SS(E))_n  \setminus \{0\}$, and clearly $uv = v$. Thus $v \leq u$, as desired.
\end{proof}

\subsection{Path algebras}\label{hfgfhyhhvgfgffd}

Given a field $K$ and a graph $E$, the contracted semigroup ring (see \S \ref{semigroupringsec}) $K[\SS(E)]$ is called the \emph{Cohn path $K$-algebra $C_K(E)$ of $E$}. Furthermore, the ring 
\begin{equation*}
L_K(E) := K[\SS(E)] \big / \Big\langle  v~-~\sum_{e\in \so^{-1}(v)} ee^{-1} \ \bigl\vert \ v \in E^0 \text{ is regular} \Big \rangle, 
\end{equation*}
is called the \emph{Leavitt path $K$-algebra of $E$}. (See~\cite{AAS}.)

Gradings on Leavitt path algebras have been studied in several papers. More specifically, the natural $\Z$-grading on $\SS(E)$ induces one on $C_K(E)$ (see \S \ref{semigroupringsec}), which in turn induces a grading on $L_K(E)$. We refer to these as the \emph{natural $\Z$-grading} on $C_K(E)$, respectively $L_K(E)$. It is shown in~\cite{hazi,chr19} that $L_K(E)$ is strongly graded with respect to the natural $\Z$-grading if and only if $E$ has no sinks, is row-finite, and satisfies the following condition.
\begin{enumerate}
\item[(Y)] For every natural number $n$ and every infinite path $p$ in $E$, there exists an initial subpath $x$ of $p$ and a path $y \in \pth(E)$ such that $\ra(y) = \ra(x)$ and $|y| - |x| = n$.
\end{enumerate}
Our next goal is to translate condition (Y) into one on $\SS(E)$, which will allow us to relate the semigroup more closely with the corresponding Leavitt path algebra. This requires introducing a new type of grading.

\begin{deff} 
Let $S$ be a $\Gamma$-graded inverse semigroup. We say that $S$ is \emph{saturated strongly $\Gamma$-graded} if for every $\alpha \in \Gamma$ and every infinite strictly descending chain of idempotents $u_0 > u_1 > \cdots$ in $S$, where $u_0$ is maximal with respect to $\leq$, there exist $n \geq 0$ and $v \in E(S)_{\alpha} \ (=\{ss^{-1} \mid s \in S_\alpha\})$ such that $u_0 \geq v \geq u_n$.   
\end{deff}

Note that, by Proposition~\ref{hgfgftgr}, for every strongly $\Gamma$-graded inverse semigroup $S$ we have $E(S) = E(S)_{\alpha}$ for all $\alpha \in \Gamma$, which implies that $S$ is saturated strongly $\Gamma$-graded. As shown in Example~\ref{locgrex}, this condition is, however, independent of ``locally strongly graded".

\begin{lemma} \label{locstrgr1}
Let $E$ be a graph. Then the following are equivalent.
\begin{enumerate}[\upshape(1)]
\item $E$ satisfies condition (Y);

\smallskip

\item $\SS(E)$ is saturated strongly graded in the natural $\Z$-grading.
\end{enumerate}
\end{lemma}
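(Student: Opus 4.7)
The plan is to set up a dictionary between the infinite descending chains of idempotents in $\SS(E)$ appearing in the definition of saturated strong gradedness and the infinite paths of $E$ appearing in condition (Y), and then to verify that the resulting condition is automatic for nonnegative degree shifts and coincides with (Y) for negative ones.

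First I would identify such chains with infinite paths. By \cite[Lemma~15(3)]{MM} the maximal nonzero idempotents of $\SS(E)$ are exactly the vertices $v\in E^0$, and by \cite[Lemma~15(1)]{MM} every nonzero idempotent is uniquely of the form $xx^{-1}$ for some $x\in\pth(E)$; moreover $xx^{-1}\leq yy^{-1}$ if and only if $y$ is an initial subpath of $x$. Hence an infinite strictly descending chain $u_0>u_1>\cdots$ with $u_0\in E^0$ corresponds to a sequence of paths $x_0=u_0,x_1,x_2,\ldots$ with $\so(x_i)=u_0$ and each $x_i$ a proper initial subpath of $x_{i+1}$; these assemble to an infinite path $p$ starting at $u_0$, and every infinite path arises in this way (e.g.\ via its length-$i$ initial subpaths).

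Next I would translate the conclusion of saturated strong gradedness. The requirement $u_0\geq v\geq u_m$ together with the above description of the order on idempotents forces $v=ww^{-1}$ for some initial subpath $w$ of $x_m$ (hence of $p$). Using the identity $(xy^{-1})(xy^{-1})^{-1}=xx^{-1}$ and the uniqueness of the representation $xx^{-1}$, membership $v\in E(\SS(E))_\alpha$ then amounts to the existence of some $y\in\pth(E)$ with $\ra(y)=\ra(w)$ and $|w|-|y|=\alpha$.

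With this dictionary in hand the proof becomes a case analysis on the sign of $\alpha\in\Z$. For $\alpha=0$, take $v=u_0$, $m=0$. For $\alpha>0$, pick $m$ large enough that $|x_m|\geq\alpha$, set $w=x_m$, and take $y$ to be the suffix of $w$ obtained by deleting its first $\alpha$ edges; then $\ra(y)=\ra(w)$ and $|w|-|y|=\alpha$, so the condition holds automatically. For $\alpha<0$, writing $n=-\alpha>0$, the condition translates into: $p$ has an initial subpath $w$ and there is some $y\in\pth(E)$ with $\ra(y)=\ra(w)$ and $|y|-|w|=n$, which is exactly condition (Y) applied to $p$ and $n$. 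Conversely, given (Y), the resulting $w,y$, together with any $m$ for which $|x_m|\geq|w|$ (so that $w$ becomes an initial subpath of $x_m$), yield a valid $v=ww^{-1}$. I do not anticipate a conceptual obstacle, only bookkeeping; the one mildly subtle point is the observation that saturated strong gradedness has nontrivial content only for $\alpha<0$, so that all the work is done by condition (Y).
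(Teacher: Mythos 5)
Your proposal is correct and follows essentially the same route as the paper: identify infinite strictly descending chains of idempotents below a maximal idempotent with infinite paths via the representation $u_i = x_ix_i^{-1}$, translate $v \in E(\SS(E))_\alpha$ into the existence of a path $y$ with $\ra(y)=\ra(w)$ and $|w|-|y|=\alpha$, and observe that only negative degrees carry content, where the condition becomes exactly (Y). The paper's proof handles the nonnegative case by taking $v=yy^{-1}$ for an initial subpath $y$ of length $n$ rather than your suffix-deletion trick, but this is an immaterial difference in bookkeeping.
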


\begin{proof}
(1) $\Rightarrow$ (2) Suppose that (1) holds, let $n \in \Z$, and let  $u_0 > u_1 > \cdots$ be a chain of idempotents in $\SS(E)$, where $u_0$ is maximal with respect to $\leq$. It is easy to show that $u_0 \in E^0$~\cite[Lemma 15(3)]{MM}, that $u_1 = x_1x_1^{-1}$ for some $x_1 \in \pth(E)$ with $\so(x_1) = u_0$~\cite[Lemma 15(1,2)]{MM}, that $u_2 = x_1x_2x_2^{-1}x_1^{-1}$ for some $x_2 \in \pth(E)$~\cite[Lemma 15(2)]{MM}, and so on. Writing $u_i = x_1\cdots x_ix_i^{-1}\cdots x_1^{-1}$ for each $i \geq 1$, we conclude that $x_1x_2 \cdots$ is an an infinite path in $E$.

Now suppose that $n \leq 0$. Then, by (1), there exists an initial subpath $y$ of $x_1x_2 \cdots$ and a path $z \in \pth(E)$ such that $\ra(z) = \ra(y)$ and $|z| - |y| = |n|$. We can write $y=x_1 \cdots x_kt$ for some $k \geq 0$ and initial subpath $t$ of $x_{k+1}$. Then $yz^{-1} \in \SS(E)_{n}$, and so \[v:= yy^{-1} = yz^{-1}zy^{-1} \in E(\SS(E))_{n}.\] Moreover, $u_0 \geq v$, since $\so(y) = u_0$, and clearly \[v = x_1 \cdots x_ktt^{-1}x_k^{-1} \cdots x_1^{-1} \geq u_{k+1},\] as desired.

Next suppose that $n >0$. Since $x_1x_2 \cdots$ is an an infinite path, we can find an initial subpath $y \in \pth(E)$ such that $|y| = n$. Then, certainly, $v:=yy^{-1} \in E(\SS(E))_{n}$, $u_0 \geq v$, and $v \geq u_k$ for some $k \geq 0$, again giving the desired conclusion.

(2) $\Rightarrow$ (1) Suppose that (2) holds, let $n$ be a natural number, and let $p = e_1e_2 \cdots $ be an infinite path in $E$, for some $e_1, e_2, \dots \in E^1$. Then \[\so(e_1) > e_1e_1^{-1} > e_1e_2e_2^{-1}e_1^{-1} > \cdots\] is a chain of idempotents in $\SS(E)$, where $\so(e_1)$ is maximal with respect to $\leq$, by~\cite[Lemma 15(3)]{MM}. Hence, by (2), there exist $m \geq 1$ and $v \in E(\SS(E))_{-n}$ such that \[\so(e_1) \geq v \geq e_1\cdots e_me_m^{-1}\cdots e_1^{-1}.\] Then $v = yz^{-1}zy^{-1}$ for some $y,z \in \pth(E)$ such that $\ra(z) = \ra(y)$ and $|z| - |y| = n$. Finally, since \[\so(e_1) \geq yy^{-1} \geq e_1\cdots e_me_m^{-1}\cdots e_1^{-1},\] it must be the case that $y$ is an initial subpath of $e_1\cdots e_m$, and hence of $p$, by~\cite[Lemma 15(2)]{MM}. Thus $E$ satisfies condition (Y).
\end{proof}

\begin{example} \label{locgrex}
Consider the following graphs.
\begin{equation*}
\xymatrix@=15pt{
E_1: & {\bullet}}
\end{equation*}

\begin{equation*}
\xymatrix@=15pt{
E_2: & {\bullet} \ar[rr] \ar[d] && {\bullet} \ar[rr] \ar[d]&& {\bullet} \ar[rr]\ar[d] && \cdots\\
& {\bullet}\ar@(dr,dl) && {\bullet}\ar@(dr,dl) && {\bullet} \ar@(dr,dl) && \cdots}
\end{equation*}

\vspace{.3in}

\noindent It is easy to see that $E_1$ satisfies condition (Y) but not condition (2) in Proposition~\ref{locstrgrgraph}, whereas $E_2$ satisfies the latter but not the former. Thus, by Proposition~\ref{locstrgrgraph} and Lemma~\ref{locstrgr1}, $\SS(E_1)$ is saturated strongly graded, but not locally strongly graded, whereas $\SS(E_2)$ is locally strongly graded, but not saturated strongly graded, in the natural $\Z$-grading. The two conditions on gradings are therefore independent.
\end{example}

We are now ready for the main result of this section, which classifies the graph inverse semigroups $\SS(E)$ for which the corresponding Leavitt path algebras are strongly graded.

\begin{thm}\label{hfghfyhff}
Let $E$ be a nonempty graph. Then the following are equivalent.
\begin{enumerate}[\upshape(1)]

\item $E$ has no sinks, is row-finite, and satisfies condition (Y);

\smallskip

\item $L_K(E)$ is strongly graded in the natural $\Z$-grading, for any field $K$; 

\smallskip

\item There are no minimal idempotents in $\SS(E)\setminus \{0\}$, for every maximal idempotent $u$ in $\SS(E)$ there are only finitely many maximal idempotents in $\{v \in E(\SS(E)) \mid v< u\}$, and $\SS(E)$ is saturated strongly graded in the natural $\Z$-grading;

\smallskip

\item For every maximal idempotent $u$ in $\SS(E)$ there are only finitely many maximal idempotents in $\{v \in E(S) \mid v< u\}$, and $\SS(E)$ is locally strongly graded and saturated strongly graded in the natural $\Z$-grading.
\end{enumerate}
\end{thm}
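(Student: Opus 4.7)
The plan is to reduce the theorem entirely to the translation results already proved in the section (Lemmas~\ref{locstrgr2},~\ref{locstrgr3},~\ref{locstrgr1}, and Proposition~\ref{locstrgrgraph}), together with the classification of strongly graded Leavitt path algebras from \cite{hazi, chr19}. The equivalence $(1)\Leftrightarrow(2)$ is exactly the latter classification and will be invoked directly. So the real work is to produce the chain $(1)\Leftrightarrow(3)\Leftrightarrow(4)$.

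For $(1)\Leftrightarrow(3)$ I would proceed by matching the three conjuncts on each side one-to-one: ``$E$ has no sinks'' is equivalent to ``there are no minimal idempotents in $\SS(E)\setminus\{0\}$'' by Lemma~\ref{locstrgr2}; ``$E$ is row-finite'' is equivalent to the finiteness-of-maximal-idempotents-below-each-maximal-idempotent condition by Lemma~\ref{locstrgr3}; and ``$E$ satisfies condition (Y)'' is equivalent to ``$\SS(E)$ is saturated strongly graded in the natural $\Z$-grading'' by Lemma~\ref{locstrgr1}. Conjoining these three equivalences gives $(1)\Leftrightarrow(3)$.

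For $(3)\Leftrightarrow(4)$, note that both statements share the finite-maximal-idempotent condition (corresponding to row-finiteness) and saturated strong gradedness (corresponding to (Y)). Hence it suffices to show that, under the assumption that $E$ satisfies (Y), the conditions ``no minimal idempotents in $\SS(E)\setminus\{0\}$'' (equivalently, by Lemma~\ref{locstrgr2}, ``$E$ has no sinks'') and ``$\SS(E)$ is locally strongly graded in the natural $\Z$-grading'' are equivalent. Using Proposition~\ref{locstrgrgraph}, the latter is the statement that for every $v\in E^0$ and every $n\in\Z$ there exist $x,y\in\pth(E)$ with $\so(x)=v$, $\ra(x)=\ra(y)$, and $|x|-|y|=n$. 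Applying this with $n=1$ immediately forces every vertex to emit an edge, so locally strongly graded implies no sinks (no condition (Y) needed here). Conversely, assume no sinks and condition (Y), and let $v\in E^0$ and $n\in\Z$. Since $E$ has no sinks, one can extend greedily to obtain an infinite path $p$ in $E$ with $\so(p)=v$. If $n\ge 0$, take $x$ to be the initial subpath of $p$ of length $n$ and $y=\ra(x)\in E^0$; then $|x|-|y|=n$. If $n<0$, apply (Y) to the infinite path $p$ and the natural number $-n$ to obtain an initial subpath $x$ of $p$ and a path $y\in\pth(E)$ with $\ra(y)=\ra(x)$ and $|y|-|x|=-n$, so that $|x|-|y|=n$. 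Either way the criterion in Proposition~\ref{locstrgrgraph}(2) is satisfied, hence $\SS(E)$ is locally strongly graded.

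The argument is essentially bookkeeping on top of the machinery already built; I do not anticipate any serious obstacle. The one piece that requires explicit construction rather than a pure quotation of a previous lemma is the implication ``no sinks $+$ (Y) $\Rightarrow$ locally strongly graded'', which is handled by the infinite-path-plus-(Y) construction in the previous paragraph. Note also that the nonemptiness hypothesis on $E$ is used only to ensure that $\SS(E)\ne\{0\}$, so that the various conditions on idempotents are not vacuous; all steps go through uniformly once a vertex exists.
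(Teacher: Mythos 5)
Your proposal is correct and follows essentially the same route as the paper: $(1)\Leftrightarrow(2)$ by citation, $(1)\Leftrightarrow(3)$ by conjunct-by-conjunct application of Lemmas~\ref{locstrgr2}, \ref{locstrgr3}, and~\ref{locstrgr1}, and the remaining equivalence reduced to showing that ``no sinks plus condition (Y)'' yields condition (2) of Proposition~\ref{locstrgrgraph} via the same infinite-path construction (splitting on the sign of $n$) that the paper uses. The only cosmetic difference is that you close the loop as $(3)\Leftrightarrow(4)$ rather than $(1)\Leftrightarrow(4)$, which is immaterial once $(1)\Leftrightarrow(3)$ is in hand.
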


\begin{proof}
(1) $\Leftrightarrow$ (2) This follows from \cite[Theorem 4.2]{chr19}. 

(1) $\Leftrightarrow$ (3) This follows from Lemmas~\ref{locstrgr2}, \ref{locstrgr3}, and~\ref{locstrgr1}.

(4) $\Rightarrow$ (1) If (4) holds, then $E$ must satisfy condition (2) in Proposition~\ref{locstrgrgraph}, which can easily be seen to imply that $E$ cannot have sinks. The desired conclusion now follows from Lemmas~\ref{locstrgr3} and~\ref{locstrgr1}.

(1) $\Rightarrow$ (4) By Lemmas~\ref{locstrgr3} and~\ref{locstrgr1}, it suffices to show that if $E$ has no sinks and satisfies condition (Y), then it also satisfies condition (2) in Proposition~\ref{locstrgrgraph}. 

Thus assume that $E$ has no sinks and satisfies condition (Y), and let $v \in E^{0}$ and $n \in \Z$. Since $E$ has no sinks, there is an infinite path $p$ in $E$ having source $v$. If $n \leq 0$, then condition (Y) implies that there exists an initial subpath $x$ of $p$ and a path $y \in \pth(E)$ such that $\ra(y) = \ra(x)$ and $|x| - |y|= n$. If $n > 0$, then letting $x \in \pth(E)$ be an initial subpath of $p$ such that $|x| = n$, and letting $y=\ra(x)$, we have $\so(x) = v$ and $|x|-|y| = n$. In either case, (2) in Proposition~\ref{locstrgrgraph} is satisfied.
\end{proof}

Restricting to row-finite graphs, we obtain a much cleaner statement, involving only conditions on the grading of $\SS(E)$.

\begin{cor}
Let $E$ be a nonempty row-finite graph. Then the following are equivalent.
\begin{enumerate}[\upshape(1)]

\item $E$ has no sinks and satisfies condition (Y); 

\smallskip

\item $L_K(E)$ is strongly graded in the natural $\Z$-grading, for any field $K$;

\smallskip

\item $\SS(E)$ is locally strongly graded and saturated strongly graded in the natural $\Z$-grading.
\end{enumerate}
\end{cor}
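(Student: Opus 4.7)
The plan is to reduce the corollary to Theorem~\ref{hfghfyhff} by observing that the row-finiteness assumption makes one of the hypotheses in parts (3) and (4) of the theorem automatic. Specifically, Lemma~\ref{locstrgr3} says that a graph $E$ is row-finite if and only if, for every maximal idempotent $u \in E(\SS(E))$, the set $\{v \in E(\SS(E)) \mid v < u\}$ contains only finitely many maximal idempotents. Thus, under the standing hypothesis that $E$ is row-finite, this combinatorial/order condition on $\SS(E)$ can be dropped wherever it appears in Theorem~\ref{hfghfyhff}.

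First I would establish the equivalence (1)~$\Leftrightarrow$~(2): since $E$ is row-finite by assumption, (1) here is literally condition (1) of Theorem~\ref{hfghfyhff}, so the equivalence with (2) is immediate from that theorem. Next, for (1)~$\Leftrightarrow$~(3), I would invoke the equivalence (1)~$\Leftrightarrow$~(4) of Theorem~\ref{hfghfyhff}, and then remove the finiteness-of-covers condition using Lemma~\ref{locstrgr3}: row-finiteness of $E$ is equivalent to that condition on $E(\SS(E))$, so under row-finiteness, the content of (4) in the theorem reduces exactly to the statement that $\SS(E)$ is both locally strongly graded and saturated strongly graded in the natural $\Z$-grading, which is (3) of the corollary.

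To double-check internal consistency, I would verify directly (without citing the theorem) that (3)~$\Rightarrow$~(1) using only the lemmas invoked in Theorem~\ref{hfghfyhff}: local strong grading of $\SS(E)$ gives condition~(2) of Proposition~\ref{locstrgrgraph}, which immediately forces $E$ to have no sinks (for any $v \in E^0$ and any positive $n$, a path $x$ of length $n$ starting at $v$ is produced, so $v$ emits an edge); meanwhile, saturated strong grading of $\SS(E)$ yields condition (Y) via Lemma~\ref{locstrgr1}. The converse (1)~$\Rightarrow$~(3) similarly unfolds: given row-finiteness, no sinks, and (Y), Lemma~\ref{locstrgr1} gives saturated strong grading, and the argument at the end of Theorem~\ref{hfghfyhff}'s proof of (1)~$\Rightarrow$~(4) (building an infinite path from the no-sinks hypothesis and then invoking (Y)) shows that $E$ satisfies condition~(2) of Proposition~\ref{locstrgrgraph}, hence $\SS(E)$ is locally strongly graded.

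There is no serious obstacle here; the argument is essentially bookkeeping. The only point requiring any care is making sure that the row-finite hypothesis really does subsume the ``finitely many maximal idempotents below a maximal idempotent'' condition \emph{both ways}, which is exactly the content of Lemma~\ref{locstrgr3}. With that observation in hand, the corollary is a clean specialisation of Theorem~\ref{hfghfyhff}.
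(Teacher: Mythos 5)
Your proposal is correct and matches the paper's own proof, which simply cites Theorem~\ref{hfghfyhff} together with Lemma~\ref{locstrgr3}; you have just spelled out the bookkeeping that the paper leaves implicit. The key observation --- that row-finiteness is equivalent, via Lemma~\ref{locstrgr3}, to the ``finitely many maximal idempotents below a maximal one'' condition, so that condition can be dropped from part (4) of the theorem --- is exactly the intended argument.
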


\begin{proof}
This follows from Theorem~\ref{hfghfyhff} and Lemma~\ref{locstrgr3}.
\end{proof}

The next corollary gives an analogue of Theorem~\ref{hfghfyhff} for Cohn path algebras. 

\begin{cor} \label{Cohncor}
Let $K$ be a field and $E$ a nonempty graph. Then neither $\SS(E)$ nor $C_K(E)$ is strongly graded in the natural $\mathbb \Z$-grading. However, if $E$ has no source vertices, then $\SS(E)$ and $C_K(E)$ are strongly $\Z/n\Z$-graded, for any positive integer $n$.
\end{cor}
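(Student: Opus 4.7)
The plan is to observe that this corollary is essentially a direct combination of results already established in the paper, applied in sequence. Both parts reduce to previously proven characterisations, so no new arguments are needed.

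For the first assertion, that $\SS(E)$ is not strongly graded in the natural $\Z$-grading, I would apply Corollary~\ref{usual-grading} directly: it states that $\SS(E)$ is strongly graded in the natural $\Z$-grading if and only if $E$ is empty. Since $E$ is assumed nonempty, the conclusion for $\SS(E)$ is immediate. To transfer this statement to the Cohn path algebra $C_K(E) = K[\SS(E)]$, I would invoke Proposition~\ref{hfhffhghggh}(1), viewing $C_K(E)$ as a $\Z$-graded ring via the grading induced from the natural $\Z$-grading on $\SS(E)$. Proposition~\ref{hfhffhghggh}(1) establishes the equivalence between $\SS(E)$ being strongly $\Gamma$-graded and $K[\SS(E)]$ being strongly $\Gamma$-graded, so the negative result lifts from the semigroup to the semigroup ring.

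For the second assertion, under the additional hypothesis that $E$ has no source vertices, I would apply Lemma~\ref{zn-lemma}, which gives an explicit strong $\Z/n\Z$-grading on $\SS(E)$ obtained by composing the natural $\Z$-grading with the canonical quotient $\Z \to \Z/n\Z$. This covers the semigroup case directly. Then, once again invoking Proposition~\ref{hfhffhghggh}(1) with $\Gamma = \Z/n\Z$, the strong grading on $\SS(E)$ induces a strong $\Z/n\Z$-grading on $C_K(E)$, completing the proof.

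There is no real obstacle in this proof; the work has all been done in Corollary~\ref{usual-grading}, Lemma~\ref{zn-lemma}, and Proposition~\ref{hfhffhghggh}(1). The only minor point worth noting explicitly in the write-up is that the $\Z/n\Z$-grading on $C_K(E)$ being referenced is precisely the one induced from the $\Z/n\Z$-grading on $\SS(E)$ constructed in Lemma~\ref{zn-lemma}, which matches the setup required for Proposition~\ref{hfhffhghggh}(1) to apply.
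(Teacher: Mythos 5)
Your proposal is correct and matches the paper's proof exactly: the paper also derives this corollary by combining Corollary~\ref{usual-grading}, Lemma~\ref{zn-lemma}, and Proposition~\ref{hfhffhghggh}(1). Your added remark that the $\Z/n\Z$-grading on $C_K(E)$ is the one induced from Lemma~\ref{zn-lemma} is a reasonable clarification but not a substantive difference.
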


\begin{proof}
This follows from Corollary~\ref{usual-grading}, Lemma~\ref{zn-lemma}, and Proposition~\ref{hfhffhghggh}(1).
\end{proof}

Let $\mathscr{G}$ be a $\Gamma$-graded Hausdorff ample groupoid (see \S \ref{topgroupoid}), and let $K$ be a field. Then the \emph{enveloping algebra of $\mathscr{G}^h$}, defined by
\begin{equation*}
K\langle \mathscr{G}^h \rangle := K[\mathscr{G}^h] \big / \big\langle  X + Y - X \cup Y \mid  X\cap Y = \emptyset, \text{ and }  X\cup Y \in \mathscr{G}^h_{\alpha} \text{ for some }\alpha \in \Gamma \big \rangle,
\end{equation*} 
is a $\Gamma$-graded $K$-algebra, via the grading inherited from $K[\mathscr{G}^h]$ (see (\ref{smgpringgr})). One can show that for any graph $E$, there is a naturally $\Z$-graded \emph{boundary path groupoid} $\mathscr{G}_E$ such that 
\begin{equation*}
K\langle \mathscr{G}^h_E \rangle \cong_{\gr} L_K(E) \cong_{\gr} A_K(\mathscr{G}_E), 
\end{equation*}
where  $A_K(\mathscr{G}_E)$ is the \emph{Steinberg algebra} of $\mathscr{G}_E$, and $\cong_{\gr}$ denotes graded isomorphism. We will not discuss $K\langle \mathscr{G}^h \rangle$, $\mathscr{G}_E$, or $A_K(\mathscr{G}_E)$ in further detail here, and instead refer the reader to \cite{rigby} for a comprehensive treatment of these objects. (See also~\cite[\S 6.3]{wehrung} for enveloping algebras of Boolean inverse semigroups, which we briefly visit in \S\ref{gradedbooleanring}.) We note, however, that Theorem~\ref{hfghfyhff} has the following consequence.

\begin{cor}\label{hfghfyhff88}
Let $E$ be a nonempty graph. Then the following are equivalent.

\begin{enumerate}[\upshape(1)]
\item $E$ has no sinks, is row-finite, and satisfies condition (Y);

\smallskip

\item $L_K(E)$ is strongly graded in the natural $\Z$-grading, for any field $K$;

\smallskip

\item $\mathscr{G}_E$ is strongly graded in the natural $\Z$-grading; 

\smallskip

\item For every maximal idempotent $u$ in $\SS(E)$ there are only finitely many maximal idempotents in $\{v \in E(\SS(E)) \mid v< u\}$, and $\SS(E)$ is locally strongly graded and saturated strongly graded in the natural $\Z$-grading.
\end{enumerate}
\end{cor}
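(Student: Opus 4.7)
The equivalences $(1)\Leftrightarrow(2)\Leftrightarrow(4)$ are already packaged in Theorem~\ref{hfghfyhff}, so the entire task reduces to inserting $(3)$ into the chain of equivalences. I would aim for $(2)\Leftrightarrow(3)$, exploiting the graded isomorphism $A_K(\mathscr{G}_E)\cong_{\gr} L_K(E)$ recorded just before the statement: under a graded isomorphism, being strongly graded is preserved, so $(2)$ is equivalent to $A_K(\mathscr{G}_E)$ being strongly $\Z$-graded in the natural grading.

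This in turn reduces the corollary to a general fact about ample groupoids and their Steinberg algebras: for a Hausdorff ample $\Gamma$-graded groupoid $\mathscr{G}$ and any field $K$, the Steinberg algebra $A_K(\mathscr{G})$ is strongly $\Gamma$-graded if and only if $\mathscr{G}$ is strongly $\Gamma$-graded. One direction is straightforward: since the characteristic functions $1_X$ of homogeneous compact open slices $X\in\mathscr{G}^{h}$ span $A_K(\mathscr{G})$, any slice decomposition $X=YZ$ with $Y\in \mathscr{G}^{h}_\alpha$, $Z\in \mathscr{G}^{h}_\beta$ yields $1_X = 1_Y * 1_Z \in A_K(\mathscr{G})_\alpha A_K(\mathscr{G})_\beta$, so $\mathscr{G}$ strongly graded forces $A_K(\mathscr{G})$ strongly graded. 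The converse uses the criterion $\domr(\mathscr{G}_\alpha)=\mathscr{G}^{(0)}$ from \S\ref{groupoidsubsection}: writing the characteristic function of a compact open $U\subseteq \mathscr{G}^{(0)}$ as a finite sum $\sum 1_{X_i}*1_{Y_i}$ with $X_i\in\mathscr{G}^{h}_\alpha$, $Y_i\in\mathscr{G}^{h}_{\alpha^{-1}}$, and then localising near any $u\in U$, exhibits a slice in $\mathscr{G}_\alpha$ whose range contains $u$. This transfer result is precisely \cite[Theorem 3.11]{chr19}.

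An entirely equivalent plan is to bypass the algebraic intermediary and invoke \cite[Theorem 4.2]{chr19} directly, which establishes $(1)\Leftrightarrow(3)$ for the boundary path groupoid $\mathscr{G}_E$ in parallel with its proof of $(1)\Leftrightarrow(2)$. The main obstacle here is not mathematical but organisational: all the heavy lifting has already been carried out either in Theorem~\ref{hfghfyhff} (the semigroup-theoretic and algebraic parts) or in \cite{chr19} (the groupoid part), and the corollary amounts to assembling these ingredients into a single statement.
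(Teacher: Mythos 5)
Your proposal matches the paper's proof: the equivalences $(1)\Leftrightarrow(2)\Leftrightarrow(4)$ are cited from Theorem~\ref{hfghfyhff}, and $(2)\Leftrightarrow(3)$ is obtained from \cite[Theorem 3.11]{chr19} via the graded isomorphism $L_K(E)\cong_{\gr}A_K(\mathscr{G}_E)$. Your additional sketch of why the Steinberg-algebra transfer result holds is a correct elaboration but not needed beyond the citation.
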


\begin{proof}
(1) $\Leftrightarrow$ (2) $\Leftrightarrow$ (4) This follows from Theorem~\ref{hfghfyhff}.

(2) $\Leftrightarrow$ (3) This follows from \cite[Theorem 3.11]{chr19}. 
\end{proof}

If the graph happens to be finite, then the previous corollary has the following very pleasant form.

\begin{cor}
Let $E$ be a finite nonempty graph. Then the following are equivalent.

\begin{enumerate}[\upshape(1)]
\item $E$ has no sinks;

\smallskip

\item $L_K(E)$ is strongly graded in the natural $\Z$-grading, for any field $K$;

\smallskip

\item $\mathscr{G}_E$ is strongly graded in the natural $\Z$-grading; 

\smallskip

\item $\SS(E)$ is locally strongly graded in the natural $\Z$-grading.
\end{enumerate}
\end{cor}

\begin{proof}
This follows from Corollary~\ref{hfghfyhff88}, upon noting that $E$ being finite implies that $E$ satisfies condition (Y) and $\SS(E)$ is saturated strongly graded in the natural $\Z$-grading.
\end{proof}

\subsection{Covering graphs}\label{covergraph}
In this subsection we show that the smash product of a graph inverse semigroup with any group is graded isomorphic to the inverse semigroup of its covering graph, which we recall next (see~\cite[\S2]{gr} and \cite[Definition 2.1]{kp}).

Let $E$ be a graph, $\G$ a group, and $\omega :E^{1}\rightarrow \G$ a ``weight" function. The \emph{covering graph $\overline{E}$ of $E$ with respect to $\omega$} is defined by
\begin{gather*}
    \overline{E}^{0} = \{v_{\a} \mid  v\in E^{0}\text{ and } \a\in\G\} \ \text{ and } \
    \overline{E}^{1} = \{e_\a \mid e \in E^1\text{ and } \a \in \G\},
\end{gather*}
with $\so(e_\a) = \so(e)_\a$ and $\ra(e_\a) = \ra(e)_{\omega (e)^{-1}\a}$ for all $e \in E^{1}$ and $\alpha \in \Gamma$. The covering graph $\overline{E}$ inherits the weight function from $E$, as follows: 
\begin{align}\label{leoleoleoel}
\overline{E}^{1} & \longrightarrow \G\\\notag 
e_\alpha &\longmapsto \omega (e).\notag
\end{align}

\begin{example}\label{onetwo3}
Let $E$ be a graph and define $\omega:E^1 \rightarrow \Z$ by $\omega (e) = 1$ for all $e \in E^1$. Then $\overline{E}$ (sometimes denoted $E\times_1 \Z$) is given by
\begin{gather*}
    \overline{E}^0 = \big\{v_n \mid v \in E^0 \text{ and } n \in \Z \big\} \ \text{ and } \
    \overline{E}^1 = \big\{e_n \mid e\in E^1 \text{ and } n\in \Z \big\},
\end{gather*}
where $\so(e_n) = \so(e)_n$ and $\ra(e_n) = \ra(e)_{n-1}$ for all $e \in E^{1}$ and $n \in \Z$. 

To construct more concrete examples, consider the following graphs.
\begin{equation*}
{\def\labelstyle{\displaystyle}
E_1 : \quad \,\, \xymatrix{
 u \ar@(lu,ld)_e\ar@/^0.9pc/[r]^f & v \ar@/^0.9pc/[l]^g
 }} \qquad \quad
{\def\labelstyle{\displaystyle}
E_2: \quad \,\, \xymatrix{
   u \ar@(dl,lu)^e  \ar@(ur,rd)^f}}
\end{equation*}
Then the corresponding covering graphs (with $\omega$ as before) are as follows.

\begin{equation*}
\xymatrix@=15pt{
\overline{E}_1: & \cdots \ {u_{-1}}  && \ar[ll]_{e_{0}} {u_{0}} \ar[dll]_(0.4){f_{0}} && \ar[ll]_{e_1} {u_{1}} \ar[dll]_(0.4){f_1}  && \ar[ll]_{e_{2}}  u_2 \ar[dll]_(0.4){f_{2}} \ \cdots\\
& \cdots \ {v_{-1}} && {v_{0}} \ar[ull]^(0.4){g_{0}}  && {v_{1}} \ar[ull]^(0.4){g_1} && v_2 \ar[ull]^(0.4){g_{2}} \ \cdots \\
& \text{\bf Level -1} && \text{\bf Level 0} && \text{\bf Level 1} && \text{\bf Level 2}\\ 
\overline{E}_2: & \cdots \ {u_{-1}} & & \ar@/^0.9pc/[ll]^{f_{0}} \ar@/_0.9pc/[ll]_{e_{0}}  {u_{0}} & & \ar@/^0.9pc/[ll]^{f_1} \ar@/_0.9pc/[ll]_{e_1} {u_{1}} & & \ar@/^0.9pc/[ll]^{f_{2}} \ar@/_0.9pc/[ll]_{e_{2}} u_2 \ \cdots}
\end{equation*}
\end{example}

Notice that for any graph $E$, the covering graph $\overline{E}$ is \emph{acyclic} (i.e., has no cycles) and \emph{stationary} (i.e., informally, the pattern of vertices and edges on ``level'' $n$ repeats on ``level'' $n+1$). 

\begin{thm}\label{gdhfthfhfhdsjje}
Let $E$ be a graph with a weight function $\omega :E^{1}\rightarrow \G$, and let $\overline{E}$ be its covering graph with respect to $\omega$. Then assigning
\begin{align}\label{gigfbds} 
v_\alpha &\longmapsto vP_\alpha\\ \notag
e_\alpha &\longmapsto eP_{\omega(e)^{-1}\alpha}\\ \notag
e^{-1}_\alpha &\longmapsto e^{-1}P_\alpha \notag
\end{align}
induces a graded isomorphism $\phi: \SS(\overline{E}) \rightarrow \SS(E) \# \Gamma$.
\end{thm}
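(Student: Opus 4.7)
The plan is to show that $\phi$ is a well-defined graded isomorphism by (i) verifying that the map on generators respects the defining relations of $\SS(\overline{E})$, (ii) checking the grading condition on generators, (iii) showing surjectivity by lifting each nonzero element of $\SS(E)\#\Gamma$ to a product of generators of $\SS(\overline{E})$, and (iv) using uniqueness of the normal form in both semigroups to get injectivity.

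First, since $\SS(\overline{E})$ is presented by the generators $\overline{E}^0 \cup \overline{E}^1 \cup \{e_\alpha^{-1}\mid e_\alpha \in \overline{E}^1\}$ subject to (V), (E1), (E2), (CK1) of Definition~\ref{shdnfhyhgkdk}, it suffices to verify that the assignments in (\ref{gigfbds}) satisfy the analogous relations in $\SS(E)\#\Gamma$. For instance, for (V): by (\ref{smashmult1}), $(vP_\alpha)(wP_\beta)$ is nonzero iff $vw \neq 0$ (so $v=w$) and $w \in \SS(E)_{\alpha\beta^{-1}}$ (so $\alpha = \beta$), in which case it equals $vP_\alpha$, matching $v_\alpha w_\beta = \delta_{v_\alpha,w_\beta}v_\alpha$. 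For (E1), $(\so(e)P_\alpha)(eP_{\omega(e)^{-1}\alpha})$ is nonzero because $\deg(e) = \omega(e) = \alpha(\omega(e)^{-1}\alpha)^{-1}$, and it computes to $eP_{\omega(e)^{-1}\alpha}$, as required by $\so(e_\alpha)e_\alpha = e_\alpha$. The verifications of the remaining relations (including (CK1), which uses that the ``$P$-subscripts'' line up exactly when $e = f$ and $\alpha = \beta$) are entirely analogous, so $\phi$ extends to a well-defined semigroup homomorphism.

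Next I verify $\phi$ is graded, by comparing degrees on generators using (\ref{leoleoleoel}), (\ref{graphinvgr}), and (\ref{ghfgdthfyr1}): $\deg(v_\alpha) = \varepsilon = \deg(vP_\alpha)$; $\deg(e_\alpha) = \omega(e) = \deg(eP_{\omega(e)^{-1}\alpha})$; and $\deg(e_\alpha^{-1}) = \omega(e)^{-1} = \deg(e^{-1}P_\alpha)$. For surjectivity, take a path $x = e_1\cdots e_n$ in $E$ and $\alpha \in \Gamma$, and define $\tilde{x}_\alpha := e_{1,\alpha}\,e_{2,\omega(e_1)^{-1}\alpha}\cdots e_{n,\omega(e_1\cdots e_{n-1})^{-1}\alpha} \in \pth(\overline{E})$; the successive source/range conditions in $\overline{E}$ are exactly what is needed for this to be a legitimate path, and a short induction using (\ref{smashmult1}) shows $\phi(\tilde{x}_\alpha) = xP_{\omega(x)^{-1}\alpha}$. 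Since $\SS(E)\#\Gamma$ is an inverse semigroup by Lemma~\ref{hgyfhyfhf1}(4), $\phi$ preserves inverses, and one computes $\phi(\tilde{y}_\beta^{-1}) = y^{-1}P_\beta$. Given an arbitrary nonzero $xy^{-1}P_\beta \in \SS(E)\#\Gamma$ with $\ra(x) = \ra(y)$, setting $\alpha := \omega(x)\omega(y)^{-1}\beta$ makes the ranges $\ra(\tilde{x}_\alpha) = \ra(x)_{\omega(x)^{-1}\alpha}$ and $\ra(\tilde{y}_\beta) = \ra(y)_{\omega(y)^{-1}\beta}$ coincide, so $\tilde{x}_\alpha\tilde{y}_\beta^{-1} \in \SS(\overline{E})$ is defined and $\phi(\tilde{x}_\alpha\tilde{y}_\beta^{-1}) = xy^{-1}P_\beta$.

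Finally, for injectivity, observe that every path $P$ in $\overline{E}$ is determined uniquely by its projection to $E$ together with the level of its source, so every nonzero element of $\SS(\overline{E})$ has the form $\tilde{x}_\alpha\tilde{y}_\beta^{-1}$ for a unique triple $(x,y,\beta)$ with $\ra(x) = \ra(y)$ (with $\alpha = \omega(x)\omega(y)^{-1}\beta$ forced by the range-matching condition). Similarly, nonzero elements of $\SS(E)\#\Gamma$ are parametrised uniquely by triples $(x,y,\beta)$ via $xy^{-1}P_\beta$, using the uniqueness of the $xy^{-1}$ representation in $\SS(E)$. The computation above shows $\phi$ is the identity on these parametrisations, hence bijective. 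The main obstacle throughout is bookkeeping: keeping the subscripts of the $P_\gamma$'s aligned with the covering-graph level map, so that the multiplication rule (\ref{smashmult1}) ``collapses'' products of generators in $\SS(E)\#\Gamma$ to the same combinatorial path that is witnessed in $\SS(\overline{E})$.
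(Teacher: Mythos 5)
Your proposal is correct and follows essentially the same route as the paper: verify the relations (V), (E1), (E2), (CK1) on the generators, check degrees on generators to get gradedness, and use the uniqueness of the $xy^{-1}$ normal form in $\SS(\overline{E})$ and $\SS(E)\#\Gamma$ for bijectivity. The only difference is presentational: you compute $\phi$ explicitly on normal forms via the lifts $\tilde{x}_\alpha$ and read off bijectivity from the matching parametrisations by triples $(x,y,\beta)$, whereas the paper obtains surjectivity from the generators and injectivity by first showing nonzero elements map to nonzero elements and then comparing images -- both arguments rest on the same uniqueness facts.
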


\begin{proof}
To show that the assignments in (\ref{gigfbds}) induce a homomorphism, it suffices to prove that the function $\phi$ induced by those assignments preserves the defining relations of $\SS(\overline{E})$--see Definition~\ref{shdnfhyhgkdk}.

To check that $\phi$ preserves the relations (V), let $v,w \in E^0$ and $\alpha,\beta \in \Gamma$. Then, noting that as idempotents, $v, w \in \SS(E)_\varepsilon$, and using Definition \ref{hghghgqq}, we have
\begin{align*}
\phi(v_{\alpha}w_{\beta}) & = \phi(\delta_{v_{\alpha},w_{\beta}}v_{\alpha}) \\
& = \left\{ \begin{array}{ll}
vP_\alpha & \text{if } \alpha = \beta \text{ and } v=w \\
0 & \text{otherwise } 
\end{array}\right. \\
& =  vP_\alpha wP_\beta = \phi(v_{\alpha})\phi(w_{\beta}).
\end{align*}
For (E1), let $e\in E^1$ and $\alpha \in \Gamma$. Then, since $\so(e_\alpha) = \so(e)_\alpha$, we have
\[\phi(\so(e_\alpha)e_\alpha) = \phi(e_\alpha) = eP_{\omega (e)^{-1}\alpha} = \so(e)P_\alpha eP_{\omega (e)^{-1}\alpha} = \phi(\so(e_\alpha))\phi(e_\alpha),\]
and, using the fact that $\ra(e_\alpha) = \ra(e)_{\omega (e)^{-1}\alpha}$, we have
\[\phi(e_\alpha\ra(e_\alpha)) = \phi(e_\alpha) = eP_{\omega (e)^{-1}\alpha} = eP_{\omega (e)^{-1}\alpha} \ra(e)P_{\omega (e)^{-1}\alpha} = \phi(e_\alpha)\phi(\ra(e_\alpha)).\]
That $\phi$ preserves the relations (E2) can be verified analogously. Finally, for (CK1), let $e,f \in E^1$ and $\alpha,\beta \in \Gamma$. Then
\begin{align*}
\phi(e^{-1}_\alpha f_\beta) & = \phi(\delta_{e_\alpha, f_\beta}\ra(e_\alpha))  = \phi(\delta_{e_\alpha, f_\beta}\ra(e)_{w(e)^{-1}\alpha}) \\ \notag
& = \left\{ \begin{array}{ll}
\ra(e)P_{\omega (e)^{-1}\alpha} & \text{if } \alpha = \beta \text{ and } e=f \\
0 & \text{otherwise } 
\end{array}\right.\\
& =e^{-1} P_\alpha  f P_{\omega (f)^{-1} \beta} = \phi (e^{-1}_\alpha)\phi(f_\beta). \notag
\end{align*}
Thus $\phi$ is a semigroup homomorphism.

Next, let $v \in V$, $e \in E^1$, and $\alpha \in \Gamma$. Then, using (\ref{graphinvgr}) and (\ref{leoleoleoel}), we have $v_\alpha \in \SS(\overline{E})_\varepsilon$, $e_\alpha \in  \SS(\overline{E})_{\omega (e)}$, and $e_\alpha^{-1} \in  \SS(\overline{E})_{\omega (e)^{-1}}$. On the other hand, by (\ref{ghfgdthfyr1}), $\phi(v_\alpha) = vP_\alpha \in (\SS(E) \# \Gamma)_\varepsilon$, $\phi(e_\alpha) = eP_{\omega (e)^{-1}\alpha}\in (\SS(E) \# \Gamma)_{\omega (e)}$, and $\phi(e_\alpha^{-1}) = e^{-1}P_{\alpha}\in (\SS(E) \# \Gamma)_{\omega (e)^{-1}}$. Thus $\phi$ preserves the degrees of the generators of $\SS(\overline{E})$. Since it is a homomorphism, it follows that $\phi$ is a graded map.

Since every element of $\SS(E) \# \Gamma$ is a product of elements of the forms $vP_\alpha$, $eP_\alpha$, and $e^{-1}P_\alpha$, for some $v \in V$, $e \in E^1$, and $\alpha \in \Gamma$, and since $\phi$ is a homomorphism, it follows immediately from (\ref{gigfbds}) that it is surjective. So it remains to show that $\phi$ is injective.
 
Let \[(e_1)_{\alpha_1} \cdots (e_n)_{\alpha_n} (f_m)_{\beta_m}^{-1} \cdots (f_1)_{\beta_1}^{-1} \in \SS(\overline{E}) \setminus \{0\},\] where the $(e_i)_{\alpha_i}, (f_i)_{\beta_i} \in \overline{E}^{0} \cup \overline{E}^{1}$, and suppose that $\phi$ maps this element to zero. Then
\[0 = e_1P_{\omega (e_1)^{-1}\alpha_1} \cdots e_nP_{\omega (e_n)^{-1}\alpha_n}f_m^{-1}P_{\beta_m} \cdots f_1^{-1}P_{\beta_1},\]
which implies that at least one of the following must be the case: $\ra(e_i) \neq \so(e_{i+1})$ for some $i$, $\ra(f_i) \neq \so(f_{i+1})$ for some $i$, $\ra(e_n) \neq \ra(f_m)$, $\omega (e_i) \neq \alpha_i\alpha_{i+1}^{-1}$ for some $i$, $\omega (f_i) \neq \beta_i\beta_{i+1}^{-1}$ for some $i$, $\omega (e_n)^{-1}\alpha_n \neq \omega (f_m)^{-1}\beta_m$. But each of these conditions implies that $(e_1)_{\alpha_1} \dots (e_n)_{\alpha_n} (f_m)_{\beta_m}^{-1} \dots (f_1)_{\beta_1}^{-1} = 0$, producing a contradiction. So $\phi$ maps nonzero elements to nonzero elements.

Next, let $s, t \in \SS(\overline{E}) \setminus \{0\}$, and suppose that $\phi(s) = \phi(t)$. Writing \[s = (e_1)_{\alpha_1} \cdots (e_n)_{\alpha_n} (f_m)_{\beta_m}^{-1} \cdots (f_1)_{\beta_1}^{-1} \text{ and } t = (g_1)_{\delta_1} \cdots (g_p)_{\delta_p} (h_r)_{\gamma_r}^{-1} \cdots (h_1)_{\gamma_1}^{-1},\] and using the fact that $\phi(s) \neq 0 \neq \phi(t)$, we have 
\[e_1 \cdots e_n f_m^{-1} \cdots f_1^{-1} P_{\beta_1} = \phi(s) = \phi(t) = g_1 \cdots g_p h_r^{-1} \cdots h_1^{-1} P_{\gamma_1},\]
along with appropriate compatibility conditions on the weights (as in the previous paragraph). It follows that $s=t$, completing the proof.
\end{proof}

It is proved in~\cite{arasims} that $L_K(E)\# \mathbb Z \cong L_K(\overline{E})$, using skew products for groupoids and Steinberg algebras. The following is an analogous result for Cohn algebras, which we can prove directly, employing the smash product for semigroups.

\begin{cor}
For any graph $E$ and field $K$, we have $C_K(E)\# \mathbb Z \cong C_K(\overline{E})$. 
\end{cor}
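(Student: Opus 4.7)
The plan is to chain together two results proved earlier in the paper, reducing the claim to a short composition of isomorphisms. Specifically, I would recognise that by definition $C_K(E) = K[\SS(E)]$ and $C_K(\overline{E}) = K[\SS(\overline{E})]$, so the target isomorphism
\[
C_K(\overline{E}) \cong C_K(E) \# \mathbb{Z}
\]
translates into
\[
K[\SS(\overline{E})] \cong K[\SS(E)] \# \mathbb{Z}.
\]

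First I would invoke Theorem~\ref{gdhfthfhfhdsjje}, applied to the graph $E$ together with the weight function $\omega: E^1 \to \mathbb{Z}$ sending every edge to $1$ (this is precisely the setup of Example~\ref{onetwo3}, which gives the covering graph $\overline{E}$ that corresponds to the natural $\mathbb{Z}$-grading of $\SS(E)$). That theorem yields a graded semigroup isomorphism $\SS(\overline{E}) \cong \SS(E)\#\mathbb{Z}$. Applying the contracted semigroup ring functor $K[-]$ to both sides, which preserves isomorphisms of semigroups with zero, produces
\[
K[\SS(\overline{E})] \cong K[\SS(E)\#\mathbb{Z}].
\]

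Next I would apply Proposition~\ref{hfhffhghggh}(2) with $A=K$, $S=\SS(E)$, and $\Gamma=\mathbb{Z}$, giving a natural ring isomorphism
\[
K[\SS(E)\#\mathbb{Z}] \cong K[\SS(E)]\#\mathbb{Z}.
\]
Composing the two isomorphisms delivers the desired $C_K(\overline{E}) \cong C_K(E)\#\mathbb{Z}$, completing the argument.

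There is essentially no obstacle here: the work was already done in Theorem~\ref{gdhfthfhfhdsjje} (which handles the nontrivial combinatorial identification between the inverse semigroup of the covering graph and the smash product at the level of semigroups) and Proposition~\ref{hfhffhghggh}(2) (which handles the interplay between smash products and contracted semigroup rings). The only minor point worth verifying in the write-up is that the grading data used in Theorem~\ref{gdhfthfhfhdsjje} matches the one used implicitly in Proposition~\ref{hfhffhghggh}(2) via the natural $\mathbb{Z}$-grading of $\SS(E)$, but this is automatic from the construction of $\overline{E}$ in Example~\ref{onetwo3}.
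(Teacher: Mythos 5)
Your proposal is correct and follows exactly the paper's own argument: the paper likewise chains Theorem~\ref{gdhfthfhfhdsjje} (giving $\SS(\overline{E}) \cong \SS(E)\#\mathbb{Z}$) with Proposition~\ref{hfhffhghggh}(2) (giving $K[\SS(E)\#\mathbb{Z}] \cong K[\SS(E)]\#\mathbb{Z}$) and the identification $C_K(E)=K[\SS(E)]$. No gaps to report.
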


\begin{proof}
Since $C(E)=K[\SS(E)]$, by Proposition~\ref{hfhffhghggh}(2) and Theorem~\ref{gdhfthfhfhdsjje}, we have 
\[C_K(E)\# \mathbb Z = K[\SS(E)] \# \mathbb Z\cong K[\SS(E) \# \mathbb Z]\cong K[\SS(\overline{E})]= C_K(\overline{E}).\qedhere\]
\end{proof}

\section{Further directions}

We conclude with some ideas for further research on graded semigroups, that we have not explored in detail.

\subsection{Graded Green's relations} 

Green's relations are a fundamental tool for studying semigroups, and so it is natural to consider graded versions thereof in the context of graded semigroups. So given a $\Gamma$-graded semigroup $S$ and $s,t \in S$, write $s\, \mathscr{L}^{\gr} \, t$ if $s\, \mathscr{L} \, t$ and $s,t\in S_\alpha$, for some $\alpha \in \Gamma$. The other \emph{graded Green's relations} $\mathscr{R}^{\gr}$, $\mathscr{H}^{\gr}$, $\mathscr{D}^{\gr}$, and $\mathscr{J}^{\gr}$ can be defined similarly. These relations partition $S$ into finer equivalence classes than the usual Green's relations, and so have the potential to shed additional light on the structure of $S$. For example, letting $L_s$ denote the $\mathscr{L}$-class of $s\in S$, we have $L_s = \bigcup_{\alpha \in \Gamma} (L_s)_{\alpha}$, where $(L_s)_{\alpha} = S_{\alpha} \cap L_s$. Recall that Green's lemma~\cite[Lemma~2.2.1]{howie} provides a bijection $\rho : L_s \rightarrow L_t$, whenever $s\, \mathscr{R} \, t$. It is easy to obtain a graded version of this result. Specifically, for all $s,t \in S$ and $\alpha \in \Gamma$, if $s\, \mathscr{R} \, t$, then there is a bijection $\rho : L_s \rightarrow L_t$ such that $\rho((L_s)_\alpha) = (L_s)_{\alpha\gamma}$, where $\gamma=\deg(t)\deg(s)^{-1}$. It may be interesting to investigate graded Green's relations more closely in the future.

\subsection{Graded Boolean inverse semigroups}\label{gradedbooleanring}

Let us mention another class of inverse semigroups that seem well-suited to the graded setting. An inverse semigroup $S$ is called \emph{Boolean} if $E(S)$ is a generalised Boolean lattice and every orthogonal pair $u, v \in E(S)$ (i.e., $u^{-1}v = 0 = v u^{-1}$, denoted $u \perp v$) has a supremum, denoted $u \oplus v$. (See \cite[\S 3.1]{wehrung} for more details.) Given a Boolean inverse semigroup $S$, the \emph{type semigroup} of $S$ is the commutative monoid $\Typ(S)$ generated by $\{\typ(u) \mid u \in E(S)\}$, subject to the following relations, for all $u,v \in E(S)$:
\begin{enumerate}
\item $\typ(0) = 0$;
    
\item $\typ(u) = \typ(v)$ whenever $u \, \mathscr{D} \, v$;
    
\item $\typ(u \oplus v) = \typ(u) + \typ(v)$ whenever $u \perp v$.
\end{enumerate}
(See~\cite[\S 4.1]{wehrung} for more details.) Now, if $S$ is a $\Gamma$-graded Boolean inverse semigroup, then it is easy to see that $S_{\varepsilon}$ is also a Boolean inverse semigroup. So it is natural to seek descriptions of the relations among these semigroups, and those among $\Typ(S)$ and $\Typ(S_{\varepsilon})$. Additionally, given a $\Gamma$-graded Boolean inverse semigroup $S$ and a field (or, more generally, unital ring) $K$, one can define the \emph{enveloping algebra} 
\begin{equation*}
K\langle S \rangle := K[S] \big / \big\langle u+v-u\oplus v \mid u \perp v \big \rangle 
\end{equation*} 
of $S$ (see~\cite[\S 6.3]{wehrung}), and investigate the relationships among the $K$-algebras $K\langle S \rangle$ and $K\langle S_{\varepsilon} \rangle$.

Type semigroups are of particular interest to us because of their connection to combinatorial algebras. More specifically, letting $S=\mathscr{G}_E^h$ be the inverse semigroup associated to the boundary path groupoid $\mathscr{G}_E$ (see~\S \ref{groupoidsection} and~\S \ref{hfgfhyhhvgfgffd}), $\Typ(S)$ is related to the non-stable $K$-theory of the corresponding graph $C^*$-algebra and Leavitt path algebra. It is believed that $\Typ(S)$ could be used to find a complete invariant for the algebras in question (see~\cite{luiz}). 

\section*{Acknowledgement}

We are grateful to the referee for a very thoughtful review, and suggestions that have led to improvements in the exposition.

\end{document}